\newcommand\R{{\mathbf{R}}}
\newcommand\C{{\mathbf{C}}}
\newcommand\Z{{\mathbf{Z}}}
\newcommand\ZN{\mathbf{Z}_{N}}
\newcommand\GN{\mathbf{G}_{N}}
\newcommand\Gk{\mathbf{G}_{k}}
\newcommand\Dphi{\mathcal{D}\phi}
\newcommand\Df{\mathcal{D}f}
\newcommand\A{\mathcal{A}}
\renewcommand\P{{\mathcal{P}}}
\renewcommand\d{\textrm{deg}}
\newcommand\E{{\mathbf{E}}}
\newcommand\F{{\mathcal{F}}}
\newcommand\Fq{{\mathbf{F}_q}}
\newcommand\FqN{{\mathbf{F}_{q^{N}}}}
\theoremstyle{plain}
  \newtheorem{theorem}{Theorem}
  \newtheorem{conjecture}{Conjecture}
  \newtheorem{proposition}{Proposition}
  \newtheorem{lemma}{Lemma}
  \newtheorem{claim}{Claim}
  \newtheorem{corollary}{Corollary}[theorem]
\theoremstyle{remark}
  \newtheorem{remark}[subsection]{Remark}
  \newtheorem{remarks}[subsection]{Remarks}
\theoremstyle{definition}
  \newtheorem{definition}{Definition}
\begin{document}

\title{Green-Tao Theorem in function fields}

\author{Th\'ai Ho\`ang L\^e}
\address{UCLA Department of Mathematics, Los Angeles, CA 90095-1596.}
\email{leth@math.ucla.edu}

\begin{abstract}
We adapt the proof of the Green-Tao theorem on arithmetic progressions in primes to the setting of polynomials over a finite fields, to show that for every $k$, the irreducible polynomials in $\Fq[t]$ contains configurations of the form $\{f+ Pg : \d(P)<k \}, g \neq 0$.
\end{abstract}

\maketitle


\section{Introduction}
In \cite{gt-primes}, Green and Tao proved the following celebrated theorem now bearing their name:

\begin{theorem}[Green-Tao]
The primes contain arithmetic progressions of arbitrarily length. Furthermore, the same conclusion is true for any subset of positive relative upper density of the primes.
\end{theorem}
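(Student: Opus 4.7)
The plan is to follow the transference strategy pioneered by Green and Tao: instead of attacking the primes directly, embed them in a slightly larger \emph{pseudorandom} set on which one can run a relative version of Szemer\'edi's theorem. Fix $k\geq 3$ and a subset $A$ of the primes of positive relative upper density $\delta>0$. First I would apply the $W$-trick: set $W=\prod_{p<w(N)}p$ for a slowly growing $w(N)$, restrict attention to a single invertible residue class $b\bmod W$ on which $A$ still has density $\gtrsim\delta$, and rescale via $n\mapsto(b+Wn)$. This removes the small-prime obstructions that would otherwise prevent the primes from looking Gowers-uniform.

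Next I would construct the pseudorandom majorant. Working inside $\Z_N$ with $N$ prime and $N\asymp$ the length of the interval under consideration, I define $\nu:\Z_N\to\R_{\geq 0}$ using a Goldston--Yildirim truncated divisor sum,
\[
\nu(n) \;=\; \frac{\varphi(W)}{W}\log R\cdot\Bigl(\sum_{d\mid Wn+b,\ d\leq R}\mu(d)\log(R/d)\Bigr)^{2},
\]
with $R=N^{\eta_{k}}$ for a small $\eta_{k}>0$. A routine verification shows $\nu$ majorizes (a constant multiple of) the indicator function of the $W$-tricked primes up to an acceptable error, so the normalized indicator $f_{A}$ of the $W$-tricked copy of $A$ satisfies $0\leq f_{A}\leq C\nu$ and $\E f_{A}\gtrsim\delta$.

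The core of the argument is then the \emph{relative Szemer\'edi theorem}: any $f$ with $0\leq f\leq\nu$ and $\E f\geq\delta$ satisfies
\[
\E\bigl(f(x)f(x+r)\cdots f(x+(k-1)r)\bigr)\;\geq\;c(k,\delta)-o(1),
\]
provided $\nu$ obeys the \emph{linear forms condition} and the \emph{correlation condition}. The proof of this step is essentially soft: a Koopman--von Neumann / dense model decomposition $f=f_{U^{k-1}}+f_{\text{unif}}$ relative to $\nu$ produces a bounded function $f_{U^{k-1}}$ to which standard Szemer\'edi applies, while a generalized von Neumann lemma (using only the linear forms condition on $\nu$) controls the contribution of the Gowers-uniform piece. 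Carrying this through, together with the majorant property, yields at least one nontrivial $k$-term AP in $A$, and iterating on larger $N$ gives infinitely many.

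The main obstacle, and the one that soaks up most of the technical work, is verifying the linear forms and correlation conditions for the Goldston--Yildirim $\nu$ above. Expanding the square and opening the M\"obius sums reduces every instance to evaluating Euler products of the shape $\prod_{p}\bigl(1+O_{p}(p^{-1})\bigr)$ obtained from local factors at primes $p\leq R$; one must show, via contour integration against the logarithmic weights $\log(R/d_{i})$ and careful use of the fact that small primes have been removed by the $W$-trick, that these products converge to the predicted main term with a power-saving error. This is where the choice of $w(N)$, $R$, and the tolerance $\eta_{k}$ all become delicate, and it is here that I would expect the bulk of the estimation, since the transference step and the appeal to Szemer\'edi are comparatively mechanical once $\nu$ is in hand.
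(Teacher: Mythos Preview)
The paper does not actually prove this theorem. Theorem~1 is stated as the celebrated result of Green and Tao and is simply cited from \cite{gt-primes}; the paper's own contribution is the function field analog (Theorem~\ref{FFGT}). So there is no ``paper's own proof'' of this statement to compare against.

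That said, your outline is a faithful sketch of the original Green--Tao strategy, and it also tracks closely the structure the paper uses for the function field version. Two minor points of divergence from what the paper does (in its analogous proof) are worth noting. First, you take the truncated divisor sum with the piecewise-linear weight $\log(R/d)$, whereas the paper follows Tao's simplification and uses a smooth cutoff $\chi(\deg(d)/R)$; this replaces the contour-integration step you anticipate by a Fourier-analytic computation (Lemma~\ref{euler} and Proposition~\ref{gy}), which is somewhat cleaner. Second, for the dense-model/decomposition step you invoke a Koopman--von~Neumann argument, while the paper adopts the later Gowers and Reingold--Trevisan--Tulsiani--Vadhan approach via Hahn--Banach (Section~\ref{S7}); this is shorter and more transparent than the original energy-increment argument, though both reach the same conclusion. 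None of this constitutes a gap in your proposal---it is a correct high-level plan for the integer case---but strictly speaking there is nothing in this paper to grade it against.
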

Subsequently, other variants of this theorem have been proved. Tao and Ziegler \cite{tao-ziegler} proved the generalization for polynomial progressions $a+p_1(d),\ldots, a+p_{k}(d)$, where $p_{i} \in \Z[x]$ and $p_{i}(0)=0$. Tao \cite{tao-g} proved the analog in the Gaussian integers.

It is well known that the integers and the polynomials over a finite field share a lot of similarities in many aspects relevant to arithmetic combinatorics. Therefore, it is natural, as Green and Tao did, to suggest that the analog of this theorem should hold in the setting of function fields:

\begin{conjecture}
For any finite field $\mathbf{F}$, the monic irreducible polynomials in $\mathbf{F}[t]$ contain affine spaces of arbitrarily high dimension.
\end{conjecture}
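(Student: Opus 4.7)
My plan is to adapt the Green--Tao strategy step by step, with $\Fq[t]$ replacing $\Z$. The natural finite model is the $\Fq$-vector space $\GN = \{f \in \Fq[t] : \d f < N\}$ of size $q^N$; a configuration $\{f + Pg : \d P < k\}$ is simply an affine $k$-dimensional subspace of $\GN$, and the monic irreducibles of degree $N$ embed into $\GN$ by subtracting $t^N$, giving density $\sim 1/N$ by the prime polynomial theorem. The goal is therefore to prove a \emph{relative} density result: any subset of $\GN$ of positive density relative to a suitable pseudorandom majorant contains a $k$-dimensional affine subspace.

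First I would isolate the unweighted input: every $A \subset \GN$ with $|A| \geq \delta q^N$ contains an affine $k$-subspace, provided $N$ is large enough in terms of $\delta, k, q$. This is essentially the density Hales--Jewett theorem of Furstenberg--Katznelson (or its Polymath proof) applied to $\GN \cong \Fq^N$, and plays the role that Szemer\'edi's theorem plays in the integer setting.

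The heart of the matter is the construction of a majorant $\nu : \GN \to \R^{\geq 0}$ satisfying Green--Tao's linear forms and correlation conditions in the multi-dimensional form needed for affine $k$-spaces, and dominating a constant multiple of the normalised counting function of the irreducibles. I would adapt the $W$-trick: pick $w = w(N) \to \infty$ slowly, let $W = \prod_{P \text{ irreducible},\, \d P < w} P$, restrict attention to $f \in \GN$ coprime to $W$, and define $\nu$ by a Goldston--Y{\i}ld{\i}r{\i}m-type truncated divisor sum weighted by $\mu(Q)\chi(\d Q / R)$ for a smooth cutoff $\chi$ and an appropriate degree parameter $R$. Verifying the two conditions reduces to mean-value estimates for divisor sums in $\Fq[t]$, which are arguably cleaner than their $\Z$-analogues because the zeta function of $\Fq[t]$ is rational in $q^{-s}$ and the relevant contour integrals collapse to explicit residue calculations, with no Siegel zero to worry about.

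With $\nu$ in hand, the transference principle --- decomposing the normalised indicator of irreducibles into a Gowers-uniform piece plus a bounded piece of positive density, where the Gowers norms are the standard ones on the $\Fq$-vector space $\GN$ --- should then run essentially as in \cite{gt-primes}. The step I expect to cause the most trouble is the sieve side: uniformity in $k$ when the characteristic $p$ of $\Fq$ is small, avoidance of degeneracies in the affine configurations (e.g.\ linear dependencies among the scalar coefficients defining the $k$-space when $p \leq k$), and the fact that the inverse theory for Gowers norms over $\GN$ in low characteristic is more delicate than over $\Z/N\Z$. Packaging the density Hales--Jewett input in a quantitative form compatible with the relative framework is a secondary but non-trivial technical issue.
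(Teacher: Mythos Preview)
Your proposal is correct and follows essentially the same route as the paper: density Hales--Jewett (strengthened by a Varnavides averaging) as the unweighted input, a $W$-tricked Goldston--Y{\i}ld{\i}r{\i}m divisor sum for the pseudorandom majorant, and the Green--Tao transference principle via Gowers norms on $\FqN$. Your worries at the end are unfounded, though: the transference argument uses only repeated Cauchy--Schwarz (the generalised von Neumann inequality) and the dual-function structure theorem, \emph{not} any inverse theorem for the Gowers norms, so low characteristic poses no obstacle; and the linear forms $f+Pg$ for distinct $P\in\Gk$ have homogeneous parts $(1,P)$ which are pairwise non-proportional over $\Fq(t)$ regardless of $\mathrm{char}\,\Fq$, so no degeneracies arise in the configuration.
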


We give an affirmative answer to this conjecture. More precisely, we will prove:

\begin{theorem}[Green-Tao for function fields] \label{FFGT}
Let $\Fq$ be a finite field over $q$ elements. Then for any $k>0$, we can find polynomials $f, g \in \Fq[t], g \neq 0$ such that the polynomials $f+Pg$, where $P$ runs over all polynomials $P \in \Fq[t]$ of degree less than $k$, are all irreducible. Furthermore, such configurations can be found in any set of positive relative upper density among the irreducible polynomials.
\end{theorem}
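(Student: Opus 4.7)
The strategy is to mirror the Green--Tao proof in the function field setting. Fix large $N$ and view $\GN := \{f \in \Fq[t] : \deg f < N\}$ as an $\Fq$-vector space of dimension $N$. A configuration $\{f+Pg : \deg P < k\}$ with $g \neq 0$ is precisely an affine $\Fq$-subspace of $\GN$ of dimension $k$ (containing $q^k$ points), so the theorem reduces to finding such an affine subspace inside the set of irreducibles in $\GN$, or more generally inside any positive-density subset of that set.

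The argument splits into two main components. First, a relative Szemer\'edi-type theorem in $\GN$: any function $0 \le g \le \nu$ with $\E g \ge \delta$, where $\nu$ is a suitable pseudorandom measure, contains many $k$-dimensional affine subspaces. Here one follows the Green--Tao transference machinery, replacing the Gowers norms on $\Z/N\Z$ by uniformity norms adapted to the vector space $\Fq^N$. The unconditional input --- that dense subsets of $\GN$ already contain $k$-dimensional affine subspaces for $N$ large --- is a finite-field analog of the multidimensional Szemer\'edi theorem of Furstenberg--Katznelson, obtainable via hypergraph regularity/removal. Second, one constructs a pseudorandom majorant $\nu$ for the function field von Mangoldt function ($\Lambda(f) = \deg P$ when $f = P^n$ for some irreducible $P$, and $0$ otherwise), using a truncated divisor sum $\Lambda_R$ of Goldston--Yildirim type, combined with a ``$W$-trick'' that restricts to residue classes modulo $W = \prod_{\deg P \le w} P$ in order to remove local obstructions.

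The main obstacle will be verifying the linear forms and correlation conditions for $\nu$: these reduce, via M\"obius inversion in $\Fq[t]$, to sieve estimates for the number of polynomials in $\GN$ satisfying congruence conditions indexed by the linear forms $f \mapsto f + Pg$. One exploits the explicit formula $\zeta_{\Fq[t]}(s) = 1/(1-q^{1-s})$ and the elementary Dirichlet $L$-function theory over $\Fq[t]$, reducing the pseudorandomness check to local Euler-product computations at each irreducible modulus. The function field setting is in fact analytically cleaner than $\Z$ (no archimedean place, and no delicate zero-free region issues), but care is needed to identify the correct singular series attached to the $q^k$-point affine configuration and to verify that the main terms match after the $W$-trick, so that the final pseudorandom majorant still dominates (a normalized form of) $\Lambda$ restricted to the relevant residue class.
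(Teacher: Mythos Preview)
Your proposal is correct and follows essentially the same route as the paper: a Szemer\'edi-type theorem in $\GN$ for the pattern $\{f+Pg : P\in\Gk\}$ (the paper obtains this from the density Hales--Jewett theorem of Furstenberg--Katznelson, together with a Varnavides averaging, rather than via hypergraph removal, but either works), a transference principle using Gowers $U^{K-1}$-norms and a dense-model decomposition, and a Goldston--Y{\i}ld{\i}r{\i}m majorant $\nu(f)=R\frac{\Phi(W)}{|W|}\Lambda_R(Wf+b)^2$ with the $W$-trick, whose linear-forms and correlation conditions are verified via Euler-product computations using $\zeta_q(s)=(1-q^{1-s})^{-1}$.

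One small caution on your opening reduction: while every $k$-configuration $\{f+Pg:\deg P<k\}$ is a $k$-dimensional affine $\Fq$-subspace, the converse fails (e.g.\ $\mathrm{span}_{\Fq}\{1,t^2\}$ is not of this form), so the theorem does \emph{not} reduce to merely finding an arbitrary $k$-dimensional affine subspace in the irreducibles. Your later steps correctly work with the specific linear system $f\mapsto f+Pg$, so this is only a phrasing issue, but make sure your Szemer\'edi input and your generalized von Neumann inequality are both stated for this particular family of forms. Also note that the paper equips $\GN$ with a \emph{field} structure (fixing an irreducible $f_N$ and working in $\FqN\cong\Fq[t]/(f_N)$) rather than just the vector-space structure; this is needed in the Cauchy--Schwarz reparametrizations that prove the von Neumann inequality, since one must divide by the nonzero elements $P\in\Gk$, and it introduces a mild wraparound issue you will need to address when pulling the configuration back to $\Fq[t]$.
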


Here we define the upper density of a set $\A \subset \Fq[t]$ to be $\overline{d}(\A)=\overline{\lim}_{N \rightarrow \infty} \frac{ \# \{ f \in \A, \d(f) < N \} }{q^{N}}$, and the relative upper density of $\A$ in the set $\P$ of all irreducible polynomials to be $\overline{d}_{\P}(\A)=\overline{\lim}_{N \rightarrow \infty} \frac{ \# \{ f \in \A, \d(f) < N \} }{\# \{ f \in \P, \d(f) < N \}}$. The conjecture then follows since the monic polynomials is of positive density in all the polynomials.

Our arguments follow Green-Tao's very closely. We also have chosen to incorporate some modifications that simplify considerably some major steps in the original arguments. Therefore, the paper may prove to be helpful to those who want to understand the ideas of the proof of Green-Tao's theorem.

\textbf{Acknowledgements.} I am grateful to my advisor Terence Tao for suggesting me this project, frequent consultation and assistance throughout the preparation of this paper.

\section{Outline of the proof and notation}

\subsection{Notation}
Through out the paper, we will be working with a fixed field $\Fq$ on $q$ elements, where $q$ is a prime power. Let $\Fq[t]$ be the ring of polynomials with coefficients in $\Fq$. Let $\Fq(t)$ be the quotient ring of $\Fq[t]$, i.e. $\Fq(t)=\{\frac{f}{g}| f,g \in \Fq[t], g\neq 0 \}$. The value of $k$ will be kept fixed. We will keep our notation consistent with that of Green and Tao.

It is clear that it suffices to find affine spaces in the irreducible polynomials of degree smaller than $N$ where $N$ is sufficiently large. Denote by $\GN$ the set of all polynomials in $\Fq[t]$ of degree less than $N$. A priori, $\GN$ is an additive group. We also, for each $N$, fix a monic irreducible polynomial $f_{N}\in\Fq[t]$. Then the additive group $\GN$ can be endowed by a field structure isomorphic to $\FqN$, the field on $q^{N}$ elements via multiplication modulo $f_{N}$. The need for the field structure arises in the same way as when we convert $\{1, \ldots, N\}$ into $\ZN$, the main reason being that we can freely perform divisions. Of course, there is a price to pay, namely the ``wraparound'' effect, which arises when we want to pull things back from $\FqN$ to $\GN$, but this is easy to deal with. In the setting of $\Fq[t]$ this is even simpler, since addition of polynomials does not increase the maximum of the degrees, quite contrarily to the integers.

If $\phi$ is a function on a finite set $A$, we write $\E( \phi(x) | x \in A)$, or $\E_{x \in A}\phi(x)$, or simply $\E_{A}\phi$ to denote the expectation of $\phi$ on $A$, in other words the average value of $\phi$ on $A$. We denote the inner product of two functions $\phi, \psi$ on $A$ as $\langle \phi, \psi \rangle= \E_{x \in A} \phi(x) \psi(x)$. We will also define the $L^{p}$-norm of $\phi$ to be $\|\phi \|_{p}=\E \left( |\phi(x)|^{p} | x \in A\right)^{1/p}$, and the $L^{\infty}$-norm to be $\|\phi \|_{\infty}=\sup_{x \in A} |\phi(x)|$.

Let $K= |\Gk|=q^{k}$, the number of polynomials of degree less than $k$.

For a non-zero polynomial $f \in \Fq[t]$ define the norm of $f$ to be $|f|=q^{\d(f)}$. Also, let $|0|=0$. Then the norm $|\cdot|$ defines a distance on $\Fq[t]$. Often, when dealing with the wraparound effect, we will make use of cylinder sets. A cylinder set of radius $r$ is simply the set of all $f \in \Fq[t]$ whose distance to a given point is at most $r$. The cylinder sets are the analog of intervals in $\R$, but they enjoy a more pleasant property that for any two cylinders, either they are disjoint or one is contained in the other.

Let us call a set $\{f+Pg: P \in \Gk \}$ a $k$-configuration. If $g \neq 0$ then it is called a non-trivial $k$-configuration. A $k$-configuration in $\GN$ is necessarily a $k$-configuration in $\FqN$, but not vice versa.

For two quantities $A,B$, we write $A=O(B)$, or $A\ll B$, or $B \gg A$ if there is an absolute constant $C$ such that $|A| \leq CB$. If $A$ and $B$ are functions of the same variable $x$, we write $A=o_{x \rightarrow \infty}(B)$ if $A/B$ tends to 0 as $x$ tends to infinity. If the constant $C$, (respectively, the rate of convergence of $A/B$) depends on a parameter, e.g. $m$, then we write $A=O_{m}(B)$ (respectively, $A=o_{m; x\rightarrow \infty}$). Dependence on fixed quantities such as $q$ or $k$ will be often omitted. Most of the time we will be dealing with functions in $N$, and when it is clear we will remove it from the notation. Thus $O(1)$ stands for a bounded quantity (independent of $N$) and $o(1)$ stands for a function that goes to 0 as $N$ tends to infinity.

\subsection{Outline of the proof}
The starting point of Green-Tao is Szemer\'edi's theorem, which
states that any set of positive density among the natural numbers
contains arbitrarily long arithmetic progressions. Actually, they
needed a stronger form of Szemer\'edi's theorem, obtained by
incorporating an argument known as Varnavides's trick
\cite{varnavides}. In the setting of function fields, an analog of
Szemer\'edi's theorem is readily available \cite{fk}, \cite{blm}.
Coupled with the Varnavides argument, this gives the following
result, which we will prove in Section \ref{S3}:

\begin{theorem}[Szemer\'edi for function fields] \label{FFSzemeredi}
For every $\delta>0$, there exists a constant $c(\delta)>0$ such that, for every function $ \phi : \FqN \rightarrow \R$ such that
$0 \leq \phi(x) \leq 1$ for all $x$ and $\E(\phi|\FqN) \geq \delta$ , we have
$$\E \left(\prod_{P \in \Gk} \phi(f+Pg)| f, g \in \FqN \right) \geq c(\delta)$$
\end{theorem}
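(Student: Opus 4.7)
My plan is to deduce this from the qualitative Szemer\'edi-type theorem for $\Fq[t]$ of \cite{fk}, \cite{blm} by a Varnavides averaging. First, I would pass from the function $\phi$ to an indicator: setting $A = \{x \in \FqN : \phi(x) \geq \delta/2\}$ and using $\phi \leq 1$ together with $\E \phi \geq \delta$ gives $|A| \geq (\delta/2) q^N$, while pointwise $\phi \geq (\delta/2)\, \mathbf{1}_A$. Hence the left-hand side of the theorem is at least $(\delta/2)^K$ times the density in $\FqN \times \FqN$ of pairs $(f, g)$ with $f + Pg \in A$ for every $P \in \Gk$, and it suffices to bound this density below by some $c'(\delta) > 0$.

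A standard compactness argument turns the qualitative Szemer\'edi result into the finitary density form I need: there exists $M = M(\delta) > k$ such that every $S \subset \mathbf{G}_M$ with $|S| \geq (\delta/4) q^M$ contains a non-trivial $k$-configuration $\{Q_0 + P Q_1 : P \in \Gk\} \subset \mathbf{G}_M$ (under polynomial arithmetic) with $Q_0 \in \mathbf{G}_M$ and $Q_1 \in \mathbf{G}_{M - k} \setminus \{0\}$. Fix such an $M$ and assume $N \geq M$. For each $(h, r) \in \FqN \times \FqN$ with $r \neq 0$, form the $M$-configuration
$$ C_{h, r} = \{h + Q r : Q \in \mathbf{G}_M\} \subset \FqN, $$
where addition and multiplication are now in the field $\FqN$. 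Since $r \neq 0$, the map $Q \mapsto h + Qr$ is a bijection onto $C_{h, r}$, which has $q^M$ elements. Averaging over $(h, r)$ with $r \neq 0$,
$$ \E_{(h, r) : r \neq 0} \frac{|A \cap C_{h, r}|}{q^M} = \frac{|A|}{q^N} \geq \frac{\delta}{2}, $$
so at least a fraction $\delta/4$ of such pairs are \emph{good}, meaning $|A \cap C_{h, r}| \geq (\delta/4) q^M$.

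For each good $(h, r)$, applying Szemer\'edi to $\{Q \in \mathbf{G}_M : h + Qr \in A\}$ produces $(Q_0, Q_1)$ as above. Setting $f = h + Q_0 r$ and $g = Q_1 r$, field distributivity yields $\{f + Pg : P \in \Gk\} \subset A$ with $g \neq 0$. Conversely, a given pair $(f, g)$ with $g \neq 0$ arises from at most $q^{M - k} \cdot q^M = q^{2M - k}$ frames, since choosing $Q_1 \in \mathbf{G}_{M - k} \setminus \{0\}$ determines $r = g / Q_1$ in $\FqN$ and then choosing $Q_0 \in \mathbf{G}_M$ determines $h = f - Q_0 r$. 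Combining, the number of valid pairs is at least
$$ \frac{(\delta/4)\, q^N (q^N - 1)}{q^{2M - k}} \geq c'(\delta)\, q^{2N} $$
for $N$ sufficiently large, completing the proof. The only point worth flagging is the identification of a $k$-configuration inside $\mathbf{G}_M$ (polynomial arithmetic, produced by Szemer\'edi) with one inside $\FqN$ (field arithmetic, demanded here); the degree bound $\deg Q_1 < M - k$ ensures every product $P Q_1$ with $P \in \Gk$ stays in $\mathbf{G}_M$ and so is not altered when reinterpreted in $\FqN$, after which translation by $h$ and multiplication by $r$ transport the configuration faithfully via distributivity.
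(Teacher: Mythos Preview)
Your proof is correct and follows essentially the same route as the paper: reduce from $\phi$ to the level set $A=\{\phi\ge\delta/2\}$, then run a Varnavides averaging over $M$-configurations using the qualitative Szemer\'edi theorem in $\Fq[t]$ as the base case. The only cosmetic difference is that the paper carries out the Varnavides count inside $\GN$ (polynomial arithmetic) and then invokes the fact that a $k$-configuration in $\GN$ is automatically one in $\FqN$, whereas you average directly over frames $C_{h,r}=\{h+Qr:Q\in\mathbf G_M\}$ in the field $\FqN$; your care in checking that $\deg(PQ_1)<M<N$ so that polynomial and field products of $P$ and $Q_1$ coincide is exactly what makes this direct approach go through.
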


Following Green and Tao, our next step is a transference principle, which allows us to generalize Szemer\'edi's theorem to larger classes of $\phi$, whose functions are not necessarily bounded. Let us call a measure a function $\nu :\GN \rightarrow \R$. A pseudorandom measure is a measure satisfying two technical conditions (to be defined later in Section \ref{S4}), called the linear forms condition and the correlation condition.

\begin{theorem} [Green-Tao-Szemer\'edi for function fields]\label{FFESzemeredi}
Given a pseudorandom measure $\nu :\FqN \rightarrow \R$. Then for every $\delta>0$, there exists a constant $c'(\delta)>0$
such that, for every function $ \phi : \FqN \rightarrow \R$ such that $0 \leq \phi(f) \leq \nu(f)$ for all $f$ and $\E(f|\FqN)\geq \delta$ , we have
$$\E \left( \prod_{P \in \Gk} \phi(f+Pg)| f, g \in \FqN \right) \geq c'(\delta)-o(1)$$
\end{theorem}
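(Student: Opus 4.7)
The plan is to mimic the Green-Tao transference argument, replacing $\Z/N\Z$ by the additive group of $\FqN$ throughout. The proof splits into three stages: a generalized von Neumann inequality controlling the $k$-configuration count by a Gowers uniformity norm; a Koopman-von Neumann decomposition $\phi = \phi_{1} + \phi_{2}$ into a bounded part and a Gowers-uniform remainder; and a final assembly step that feeds $\phi_{1}$ into the bounded Szemer\'edi Theorem \ref{FFSzemeredi} while discarding $\phi_{2}$ via the von Neumann estimate.

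First I would introduce a Gowers uniformity norm $\|\cdot\|_{U^{d}}$ on $\FqN$, defined through $2^{d}$-fold additive cubes exactly as over $\Z/N\Z$. A standard iterated Cauchy-Schwarz argument applied to the affine system $\{f+Pg: P \in \Gk\}$ then yields the generalized von Neumann inequality: for any functions $\phi_{P}$ with $\norm{\phi_{P}(x)} \leq \nu(x)+1$,
$$
\left| \E_{f,g \in \FqN} \prod_{P \in \Gk} \phi_{P}(f+Pg) \right| \ll \min_{P \in \Gk} \|\phi_{P}\|_{U^{d}} + o(1),
$$
for some integer $d$ depending only on $k$. The $o(1)$ error absorbs the deviation of $\nu$ from the constant $1$ through the linear forms condition, applied to the finite system of forms produced by the $d$ iterated doublings of the original $q^{k}$ forms.

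Next I would run the Koopman-von Neumann / dense-model procedure. Let $\Dphi$ denote the dual function of $\phi$ with respect to $\|\cdot\|_{U^{d}}$, and collect a finite family of such duals of ``basic anti-uniform'' pieces produced by an energy-increment iteration on $\phi$. The pseudorandomness hypotheses on $\nu$, through both the linear forms and the correlation conditions, are used here to show that bounded polynomial combinations of duals of $\nu$-bounded functions are $o(1)$-close in $L^{1}(\FqN)$ to honestly bounded functions. Discretizing these duals produces a $\sigma$-algebra $\mathcal{B}$ on $\FqN$ of bounded complexity for which $\phi_{1} := \E(\phi \mid \mathcal{B})$ satisfies $0 \leq \phi_{1} \leq 1 + o(1)$ pointwise and $\E(\phi_{1}) = \E(\phi) \geq \delta$, while $\phi_{2} := \phi - \phi_{1}$ satisfies $\|\phi_{2}\|_{U^{d}} = o(1)$ and $\norm{\phi_{2}} \leq \nu + 1$.

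To conclude, expand the left-hand side by writing $\phi = \phi_{1}+\phi_{2}$ in each of the $K=q^{k}$ slots, producing $2^{K}$ terms. The all-$\phi_{1}$ term is at least $c(\delta/2) - o(1)$ by Theorem \ref{FFSzemeredi} applied to $\phi_{1}/(1+o(1))$, whose mean exceeds $\delta/2$ once $N$ is large. Each of the remaining $2^{K}-1$ terms contains at least one $\phi_{2}$ factor, and is therefore $o(1)$ by the generalized von Neumann inequality. Taking $c'(\delta) := c(\delta/2)/2$ completes the argument. The hard part, exactly as in the original Green-Tao paper, is not the machinery above but the correct axiomatization of the notion of pseudorandom measure in Section \ref{S4}: the linear forms and correlation conditions must be strong enough to drive \emph{both} the von Neumann and the dense-model steps, yet weak enough to be verifiable for the modified von Mangoldt weight on $\Fq[t]$ constructed in later sections. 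The wraparound between $\GN$ and $\FqN$ is a secondary technicality, handled cleanly here because, as noted in the introduction, polynomial addition never raises degree.
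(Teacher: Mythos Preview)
Your proposal is correct and follows the original Green--Tao architecture; the paper takes a genuinely different route for the decomposition step. For the dense-model stage you outline the energy-increment argument: iteratively build a $\sigma$-algebra $\mathcal{B}$ from discretized dual functions, set $\phi_1=\E(\phi\mid\mathcal{B})$, and use the correlation and linear-forms conditions to show $\phi_1$ is essentially bounded by $1$. The present paper instead adopts the Gowers / Reingold--Trevisan--Tulsiani--Vadhan approach (Section~\ref{S7}): it axiomatizes the relevant properties of the $U^{K-1}$ norm and its dual functions as a ``quasi-algebra predual'' norm (Definition~\ref{QAP}), and then a single application of the Hahn--Banach separation theorem together with Weierstrass approximation yields the decomposition $\phi=\phi_1+\phi_2$ directly (Theorem~\ref{gtz}), with no iteration and no $\sigma$-algebras. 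Both routes rest on the same three facts about dual functions (Lemmas~\ref{property1}, \ref{property2}, \ref{property3}), so the pseudorandomness hypotheses required are identical; what your route buys is fidelity to the historical argument, while the paper's buys brevity --- the Hahn--Banach proof fits on a page and avoids the bookkeeping of the energy increment. One small imprecision in your sketch: neither approach literally gives $\|\phi_2\|_{U^{d}}=o(1)$. Rather, for each fixed $\eta>0$ chosen small in terms of $\delta$, one obtains $\|\phi_2\|_{U^{K-1}}\leq\eta$ once $N$ is large enough, and the constant $c'(\delta)$ then depends on this choice of $\eta$ (see the final proof in Section~\ref{S7}, where $\eta$ is fixed so that $(2+\eta)^K c(\frac{\delta-\eta}{2+\eta})-(2^K-1)4^K\eta>0$).
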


This is obtained by means of a decomposition result, namely any function $\phi$ bounded by a pseudorandom measure can be decomposed as $\phi=\phi_{1}+\phi_{2}$, where $\phi_1$ is a nonnegative, bounded function, whose average is bounded from below, and $\phi_2$ is uniform in the sense that it is small in a norm (the Gowers norm to be defined later) that is relevant to counting $k$-configurations. Thus the contribution of $f_2$ in $\E \left( \prod_{P \in \Gk} \phi(f+Pg)| f, g \in \FqN \right)$ is small, so that the latter is close to $\E \left( \prod_{P \in \Gk} \phi_1(f+Pg)| f, g \in \FqN \right)$, which is bounded from below by the usual Szemer\'edi's theorem. The proof of the decomposition result in Green-Tao \cite{gt-primes} and later in \cite{tao-ziegler} is quite involved. Recently Gowers \cite{gowers} and Reingold-Trevisan-Tulsiani-Vadhan \cite{rttv}, \cite{rttv2} have found much simpler proofs of this result, the main tool being the Hahn-Banach theorem. Moreover, their formulations of the result are very general and directly applicable to our setting of function fields.

Once Theorem \ref{FFESzemeredi} is established, the final step is to show that $\nu$ can be constructed in such a way that $\nu$ majorizes functions supported on irreducible polynomials, such as (variants of) the von Mangoldt function $\Lambda$\footnote{Ideally, we would like to take $\nu=\Lambda$, but to verify that $\Lambda$ satisfies the conditions of a pseudorandom measure we need more information about additive properties of the irreducible polynomials, which would in turn lead to statements analogous to the twin prime conjecture. Even if the analog of the twin prime conjecture in $\Fq[t]$ is known to be true, the conjectured asymptotic formula is not yet proven.}, where $\Lambda(f)=
\left\{
                                         \begin{array}{ll}
                                           \d(P), & \hbox{if $f=cP^{k}$, where P is irreducible and $c\in \Fq$;} \\
                                           0, & \hbox{otherwise.}
                                         \end{array}
                                       \right.$
To this end, we will make adaptations of the truncated divisor sum
of Goldston and Y{\i}ld{\i}r{\i}m on their work on short gaps
between primes \cite{gy}, \cite{gy2} in Section \ref{S9}.

\begin{theorem}[Goldston-Y{\i}ld{\i}r{\i}m for function fields]\label{FFGY}
For any $\A \subset \P$ such that $\overline{d}_{\P}(\A)>0$, there
exist a constant $\delta >0$, a pseudorandom measure $\nu :\FqN
\rightarrow \R$, a function $\phi :\FqN \rightarrow \R$ and $W,b \in
\FqN$ such that the following are true for infinitely many $N$:
\begin{enumerate}
        \item $\phi$ is 0 outside of $\{h \in \GN : Wh+b \in \A \}$.
        \item $0 \leq \phi \leq \nu$.
        \item $\E(\phi | \FqN) \geq \delta$.
        \item $\|\phi\|_{\infty} \ll N$.
\end{enumerate}
\end{theorem}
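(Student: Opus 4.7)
The plan is to follow Green and Tao's template. All integer inputs have $\Fq[t]$-analogs: Dirichlet's theorem on irreducibles in arithmetic progressions, the prime polynomial theorem, the M\"obius function, and the Riemann zeta function. In fact, the function field versions are strictly simpler to handle.

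First, I would apply the ``$W$-trick.'' Pick a slowly growing parameter $w=w(N)\to \infty$ and set $W = \prod_P P$, the product over monic irreducibles $P\in\Fq[t]$ with $\d(P)\leq w$. The prime polynomial theorem gives $\d(W)\leq \eps N$ for any fixed $\eps>0$ once $N$ is large. Write $W_{\ast}=\prod_{P\mid W}(|P|-1)$ for the $\Fq[t]$-totient. Because $\A$ has positive relative density in $\P$, equidistribution of irreducibles modulo $W$ (Dirichlet's theorem in $\Fq[t]$) together with pigeonhole produces a residue $b$ with $(b,W)=1$ such that
\[
\#\set{h\in\GN : Wh+b\in\A,\ \d(Wh+b)\in[N,N+\d(W)]}\ \gg\ \delta\,\frac{q^N}{N}\,\frac{W}{W_{\ast}}
\]
for infinitely many $N$. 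Next, define $\phi$ on $\FqN\simeq\GN$ by
\[
\phi(h)\;=\;c_{q,k}\,\frac{W_{\ast}}{W}\,\d(Wh+b)\,\mathbf{1}_{Wh+b\in\A},
\]
supported on those $h$ for which $Wh+b$ has degree in the prescribed window. Property~(1) is by construction; $\|\phi\|_{\infty}\ll N$ is immediate from $\d(Wh+b)\leq N+\d(W)\ll N$; and the Dirichlet density factor $W/W_{\ast}$ in the count above is cancelled by the weight $W_{\ast}/W$, giving $\E(\phi\mid\FqN)\geq \delta'$ for some $\delta'>0$ after calibrating $c_{q,k}$.

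For the majorant I would adopt the function field Goldston--Y\i ld\i r\i m truncated divisor sum
\[
\nu(h)\;=\;c'_{q,k}\,\frac{W_{\ast}}{W}\,\d(R)\,\Biggl(\sum_{D\mid Wh+b}\mu(D)\,\chi\!\left(\frac{\d(D)}{\d(R)}\right)\Biggr)^{\!2},
\]
where $D$ runs over monic divisors in $\Fq[t]$, $\mu$ is the $\Fq[t]$ M\"obius function, $\chi:\R\to\R$ is a fixed smooth cutoff supported on $[-1,1]$ with $\chi(0)=1$, and $R\in\Fq[t]$ has $\d(R)$ a small fixed fraction of $N$ (set $\nu\equiv 1$ outside the support of $\phi$, in the standard Green--Tao fashion). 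On the support of $\phi$ the polynomial $Wh+b$ is irreducible of degree $\gg\d(R)$, so only the divisor $D=1$ survives, yielding $\nu(h)\gg \d(R)\cdot W_{\ast}/W$ and hence $\phi\leq \nu$ upon calibrating $c'_{q,k}$.

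The main obstacle is verifying the linear forms and correlation conditions of Section~\ref{S4}. Expanding the square and swapping summations reduces each condition, for every tuple of monic divisors $(D_1,\ldots,D_m)$ and every tuple of linear forms $\psi_1,\ldots,\psi_t$ appearing in the condition, to counting $h\in\FqN$ with $D_i\mid W\psi_i(h)+b$. These are lattice counts modulo $\mathrm{lcm}(D_i)$ that factor through an Euler product over irreducibles. The remaining analytic step is to evaluate sums such as $\sum_D \mu(D)^2\,\chi(\d(D)/\d(R))^2/|D|$; in the integer case this calls for a contour shift against the Riemann zeta function with careful error analysis, but in our setting $\zeta_{\Fq[t]}(s)=(1-q^{1-s})^{-1}$ is rational with a unique simple pole at $s=1$, and the evaluations collapse to elementary residue calculations with explicit error terms of order $q^{-c\,\d(R)}$. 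The correlation condition is treated analogously, using the $\Fq[t]$ version of the divisor bound. Wraparound from $\GN$ into $\FqN$ is controlled by taking $\d(R)$ much smaller than $N-\d(W)$ and invoking the clean cylinder-set structure of $\Fq[t]$ recalled in Section~2; once $w(N)$ grows slowly enough to beat all accumulated errors, the four conclusions of the theorem follow.
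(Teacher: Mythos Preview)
Your proposal follows essentially the same route as the paper: the $W$-trick, the Goldston--Y\i ld\i r\i m truncated divisor sum for $\nu$, pigeonhole to select the residue $b$, and Euler-product analysis against $\zeta_q(s)=(1-q^{1-s})^{-1}$ to verify the linear forms and correlation conditions. Two small discrepancies worth noting: the paper weights $\phi$ by the constant $\chi(0)^2\frac{\Phi(W)}{|W|}R$ on its support rather than by $\d(Wh+b)$ (either works, since $\d(Wh+b)\asymp N\asymp R$), and the paper defines $\nu$ \emph{globally} by the GY formula---your parenthetical ``set $\nu\equiv 1$ outside the support of $\phi$'' is not what Green--Tao do (they set $\nu=1$ off a fixed dense interval, not off the sparse support of $\phi$), and taken literally it would spoil the linear forms condition; in this paper the piecewise definition is dispensed with altogether, wraparound being handled by the cylinder-set argument you mention at the end.
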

\begin{remarks}
The introduction of $W$, known as the ``$W$-trick'', is quite common in this situation in arithmetic combinatorics, when we want to transfer results about dense sets to the primes. We will need the irreducible polynomials to be distributed sufficiently uniformly in congruence classes, and for this purpose, we will take $W$ to be a product of small irreducible polynomials. The value of $b$ is chosen
by the pigeonhole principle, so that the residue class of $b$ modulo
$W$ occupies a large proportion of $\A$.
\end{remarks}
\begin{proof}[Proof of Theorem \ref{FFGT} using Theorem \ref{FFESzemeredi} and Theorem \ref{FFGY}]
Suppose $N$ is such that the conclusions of Theorem \ref{FFGY}
holds. We partition $\GN$ into $q^{k}$ disjoint cylinders
$\mathcal{C}_{i}$ of radius $q^{N-k}$, so that
$|f_{1}-f_{2}|<q^{N-k}$ for any two polynomials $f_1, f_2$ in the
same cylinder. There must be a cylinder $\mathcal{C}_{i}$ on
which the average of $\phi$ is at least $\delta$. Let $\psi=\phi
1_{\mathcal{C}_{i}}$. Applying Theorem \ref{FFESzemeredi} to the
function $\psi$, we have $\E \left( \prod_{P \in \Gk} \psi(f+Pg)| f,
g \in \FqN \right) \geq c'(\frac{\delta}{q^{k}})-o(1)$. Because of
the bound on the magnitude of $\phi$, the contribution of the
products corresponding to trivial $k$-configurations is $o(1)$. Thus
for $N$ sufficiently large, $\psi$ is non-zero on some non trivial
$k$-configuration $\{ f+Pg| P \in \Gk  \} \subset \FqN$. A priori,
this is a $k$-configuration in $\FqN$. Because of the definition of
$\psi$, $f+Pg \in \mathcal{C}_{i}$ for every $P \in \Gk$. In
particular $q^{N-k}>|(f+g)-f|=|g|$, so that the above
$k$-configuration is indeed a $k$-configuration in $\GN$. Thus
$\{W(f+Pg)+b | P \in \Gk \}$ is a non-trivial $k$-configuration that
lies entirely in $\A$, since $\psi$ is supported in $\{h \in \GN :
Wh+b \in \A \}$.
\end{proof}

\begin{remarks}
The techniques here not only give infinitely many $k$-configurations, but also show that the number of such configurations is $\gg \frac{q^{2N}}{N^{K}}$, which is of correct magnitude in the context of the Hardy-Littlewood conjecture on tuples of primes. We remark that while more algebraic methods can generate configurations of irreducibles, e.g. the analog of the twin prime conjecture (\cite[Section 1.10]{pollack}), such methods don't give the correct bound (up to a constant).
\end{remarks}

The next sections are organized as follows. In Section \ref{S3} we establish Theorem \ref{FFSzemeredi}. In Section \ref{S4}, we define pseudorandom measures. Next, in Sections \ref{S5}, \ref{S6}, we introduce the Gowers norms, dual functions and their properties, which are necessary in our proof of the decomposition result in Section \ref{S7}. In Section \ref{S8}, we introduce arithmetic functions in $\Fq[t]$. We give in Section \ref{S9} the construction of a measure $\nu$ measure that majorizes the irreducible polynomials. Sections \ref{S10} and \ref{S11} will be devoted to establishing the pseudorandomness of $\nu$, thus finishing our proof of Theorem \ref{FFGT}.

\section{Szemer\'edi's theorem in function fields} \label{S3}

As aforementioned, we need an analog of Szemer\'edi's theorem in
$\Fq[t]$, namely that we can find non-trivial $k$-configurations
inside any subset of positive upper density of $\Fq[t]$:

\begin{proposition} \label{p1}
Given $\delta>0$. Then for $N$ sufficient large, $N \geq
N_0=N_0(q,k,\delta)$, in every subset $A$ of size $\delta q^{N}$ of
$\GN$, we can find polynomials $f, g \in \Fq[t], g \neq 0$ such that
$f+Pg \in A$ for every polynomial $P \in \Gk$.
\end{proposition}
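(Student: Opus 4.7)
Proposition \ref{p1} is the density form of a multidimensional Szemer\'edi-type theorem in $\Fq[t]$: it says that every positive-density subset of $\GN$ must contain, for some $g\neq 0$, the $k$-dimensional affine $\Fq$-subspace $f+g\cdot\Gk$. An ergodic-theoretic form of such a statement is established in \cite{fk}, and in a version tailored to modules over $\Fq[t]$ in \cite{blm}; my plan is to deduce the finitary Proposition \ref{p1} from either of these via a standard Furstenberg correspondence.

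Suppose for contradiction that the proposition fails for some $\delta>0$, so there is a sequence $N_j\to\infty$ and sets $A_j\subset G_{N_j}$ with $|A_j|\geq\delta q^{N_j}$ none of which contains a non-trivial $k$-configuration. View each $G_{N_j}$ inside the countable abelian group $G=\Fq[t]\cong\bigoplus_{n\geq 0}\Fq$. On the Bernoulli space $\Omega=\{0,1\}^G$ with the shift action $(T_h\omega)(f)=\omega(f+h)$, form the probability measure $\mu_j$ that averages the Dirac masses at the points $T_h 1_{A_j}\in\Omega$ as $h$ ranges over $G_{N_j}$. By Banach--Alaoglu a subsequential weak-$*$ limit $\mu$ exists; it is shift-invariant, the cylinder $A^*=\{\omega\in\Omega:\omega(0)=1\}$ has $\mu(A^*)\geq\delta$, and the absence of $k$-configurations in the $A_j$ transfers in the limit to
$$\mu\!\left(\bigcap_{P\in\Gk}T_{Pg}^{-1}A^*\right)=0\qquad\text{for every }g\in G\setminus\{0\}.$$

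Since $G=\Fq[t]$ is a commutative ring, multiplication by each $P\in\Gk$ is a $\Fq$-linear endomorphism of $G$, and the multiple-recurrence theorem of \cite{blm} (or \cite{fk} applied to the commuting shifts $\{T_{Pg}:P\in\Gk\}$) supplies a strictly positive liminf of $\mu\bigl(\bigcap_{P\in\Gk}T_{Pg}^{-1}A^*\bigr)$ as $g$ is averaged over the F\o lner sequence $G_M$ of $G$, contradicting the previous display. The only technical subtlety is the correspondence itself, and the wraparound difficulties that complicate the integer analogue are absent here: once an offset $g$ has been extracted from the ergodic argument its degree is bounded independently of $j$, so the identity $\d(f+h)\leq\max(\d(f),\d(h))$ ensures that the resulting $k$-configuration lies inside $G_{N_j}$ for all sufficiently large $j$. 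The qualitative dependence $N_0(q,k,\delta)$ comes out ineffectively from this compactness-and-contradiction argument.
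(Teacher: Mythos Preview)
Your argument is correct. The Furstenberg correspondence works cleanly here --- the limiting measure $\mu$ is $G$-invariant because each $G_{N_j}$ is already a subgroup, and $\mu\bigl(\bigcap_{P\in\Gk}T_{Pg}^{-1}A^*\bigr)=0$ for each fixed $g\neq 0$ since this intersection is clopen and $\mu_j$ of it vanishes once $N_j>\d(g)+k$ --- after which the linear case of the multiple recurrence theorem of \cite{blm} supplies the contradiction. (Your parenthetical appeal to \cite{fk} via ``commuting shifts'' is shakier: the additive group $\Fq[t]$ has exponent $p$, so the usual commuting-$\Z$-actions framework does not apply directly; but \cite{blm} is tailored to precisely this situation, and that suffices.) Your last paragraph slightly conflates the ergodic and finitary levels --- the contradiction lives entirely at the level of $\mu$, and one does not need to pull $g$ back to the $A_j$ --- but the substance is fine.

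The paper does mention the \cite{blm} route but does not write out the correspondence; the proof it actually presents goes instead through the density Hales--Jewett theorem. The point is that $\mathbf{G}_{kn}$ is literally the combinatorial space of dimension $n$ on the alphabet $\Gk$ (partition the $kn$ coefficients into $n$ consecutive blocks of length $k$), and a combinatorial line in this space is exactly a non-trivial $k$-configuration. Density HJ with parameters $(\delta,q^k)$ then gives the proposition at $N=k\cdot HJ(\delta,q^k)$, with larger $N$ handled by pigeonholing into cosets. That route avoids constructing any dynamical system and, in light of the recent combinatorial proofs of density HJ, would in principle yield explicit (if poor) bounds on $N_0$; your compactness-and-contradiction argument is, as you note, inherently ineffective.
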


There are at least two ways to see this. It is an immediate
consequence of a far more general result of
Bergelson-Leibman-McCutcheon \cite{blm}:

\begin{theorem}[Polynomial Szemer\'edi for countable integral domains]
Let $K$ be a countable integral domain, $M$ be a finitely generated
$K$-module, $p_1, \ldots, p_{n}$ be polynomials $K \rightarrow M$
such that $p_i(0)=0$ for every $i$. Then for any set $A \subset
M$ of upper Banach density $d^{*}(A)>0$, there exist $d \in K, d
\neq 0$ and $a \in M$ such that $a+p_{i}(d) \in A$ for every
$i=1,\ldots,n$.
\end{theorem}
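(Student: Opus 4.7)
The plan is to follow Bergelson's ergodic-theoretic approach to polynomial Szemer\'edi phenomena. The first step is a Furstenberg correspondence principle adapted to the countable abelian group $(M,+)$: given $A \subset M$ with $d^*(A) > 0$, one constructs a probability measure-preserving action $T=(T_m)_{m \in M}$ on some $(X, \mu)$ together with a measurable set $\tilde A \subset X$ with $\mu(\tilde A) = d^*(A)$, so that
$$d^*\Bigl(\bigcap_{i=1}^{n}(A - p_i(d))\Bigr) \;\geq\; \mu\Bigl(\bigcap_{i=1}^{n} T_{p_i(d)}^{-1}\tilde A\Bigr) \qquad (d \in K).$$
The construction is the standard one: realise $1_A$ as a point in the Bebutov shift $\{0,1\}^M$, take $X$ to be its $M$-orbit closure under the shift action, and produce $\mu$ as a weak-$*$ limit of empirical measures along a F\o lner sequence $(F_N) \subset M$ witnessing $d^*(A)$. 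The combinatorial theorem then reduces to a \emph{polynomial multiple recurrence} statement: for any such $M$-system and any $\tilde A$ with $\mu(\tilde A) > 0$, there is some $d \in K \setminus \{0\}$ with $\mu\bigl(\bigcap_i T_{p_i(d)}^{-1} \tilde A\bigr) > 0$.

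The second step is to prove the recurrence statement by PET (Polynomial Ergodic Theorem) induction, following Bergelson-Leibman. One begins by exploiting the hypothesis that $M$ is finitely generated over $K$: choosing generators $e_1, \ldots, e_r$, each map $p_i: K \to M$ is expressed in coordinates as polynomials $K \to K$, which reduces matters to averages of the form $\mathbf{E}_{d \in F_N} \prod_i T_{p_i(d)} f_i$ for an explicit scalar-valued polynomial system. One then attaches to each finite polynomial family a complexity weight (lexicographic in degrees and leading-coefficient multiplicities) and uses a van der Corput inequality, adapted to F\o lner averages on $(K,+)$, to bound such a multilinear average in $L^2$ by one indexed by the shift $d \mapsto d+h$ of a strictly simpler family. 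After finitely many iterations, the family collapses to an essentially trivial one with a visible positive lower bound, and this positivity propagates back up the induction.

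The principal obstacle is the PET induction itself: one must set up the complexity order carefully enough that every van der Corput step genuinely decreases it, and must handle the characteristic factor analysis needed at the base case (typically identifying a nil- or Kronecker factor of the $M$-action on which the averaging becomes computable). Passing from the classical $\Z$-action setting to actions of a general countable abelian group $M$ also requires verifying that the mean ergodic theorem and van der Corput's inequality hold for F\o lner averages in $(M,+)$; these are by now standard but form the technical backbone of the proof. A mild simplification specific to the $\Fq[t]$-setting is that the natural F\o lner sequence $\GN$ has no wraparound (since $|f+g| \leq \max(|f|,|g|)$), which streamlines the correspondence step and the complexity bookkeeping inside the PET induction.
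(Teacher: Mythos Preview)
The paper does not actually prove this theorem; it is quoted as a black-box result of Bergelson, Leibman, and McCutcheon \cite{blm}, and only the special case $K=M=\Fq[t]$ with linear polynomials $g\mapsto Pg$ is invoked to obtain Proposition~\ref{p1}. (The paper also offers an independent route to Proposition~\ref{p1} via the density Hales--Jewett theorem, so it never relies on reproducing the argument of \cite{blm}.) There is therefore no in-paper proof to compare against.

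That said, your outline is broadly faithful to the ergodic approach behind \cite{blm}: a Furstenberg correspondence principle for the amenable group $(M,+)$, followed by polynomial multiple recurrence established by PET induction. One point deserves correction. You describe the base case as requiring ``characteristic factor analysis \ldots\ identifying a nil- or Kronecker factor''; this conflates two distinct programmes. In the Bergelson--Leibman scheme for \emph{recurrence}, the PET induction reduces the polynomial statement to a linear (Szemer\'edi-type) multiple recurrence, which is then supplied by Furstenberg--Katznelson; no structural factor theory is needed. Characteristic factors enter only when one wants the stronger \emph{convergence} of the multilinear averages. Also, for the general integral-domain setting \cite{blm} in fact works in an IP framework (IP-limits, polynomial Hales--Jewett input) rather than with F\o lner averages and a van der Corput lemma as you sketch; the inductive architecture is similar in spirit, but the technical substrate is different, and your F\o lner/van der Corput plan would need nontrivial adaptation to handle polynomials $K\to M$ over an arbitrary countable integral domain rather than over~$\Z$.
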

When $K=M=\Fq[t], p_1, \ldots, p_{n}$ are the linear polynomials $g \mapsto Pg$, where $P \in \Gk$, then we have the desired result.

Proposition \ref{p1} can also be done by using the density
Hales-Jewett theorem. Before stating it, we need some definitions:

\begin{definition}
Given a set $A = \{ a_{1}, a_{2}, \ldots a_{t} \}$, the
$n$-dimension combinatorial space on $A$ is $C_{t}^{n} = \{ (x_1,
x_2, \ldots, x_{n}) : x_{i} \in A \textrm{ for } i=1,2, \ldots n \}$. A
combinatorial line in $C_{t}^{n}$ is a collection of $t$ points
$x^{(1)}, x^{(2)}, \ldots x^{(t)} \in C_{t}^{n}$ such that for some subset $I\neq \emptyset$ of $\{1,\ldots,n\}$ (the ``active'' coordinates), and fixed elements $(b_{i})_{i \in I}$, we have
$$x^{(j)}_{i}=
\left\{
  \begin{array}{ll}
    a_{j}, & \hbox{if $i \in I$ ;} \\
    b_{i}, & \hbox{if $i \not \in I$.}
  \end{array}
\right.$$
for any $j=1,\ldots,t$.
\end{definition}
We can think of a combinatorial space as the set of
all words of length $n$ on $t$ letters. For example, if $A=\{a,b,c\}$ then $\{axbx:x=a,b,c\}$ is a combinatorial line in the $4$-dimensional combinatorial space on $A$.
Note that a combinatorial line is a stronger notion than a geometric line in
that it``looks like" a line in every reordering of the underlying
set $A$.

The classical Hales-Jewett theorem \cite{hales-jewett}, \cite{grs} says

\begin{theorem}[Hales-Jewett]For every $r,t$, there is a number $HJ(r,t)$ such that if $n \geq HJ(r,t)$, if the points of the $n$-dimensional combinatorial space on $t$ elements are colored by $r$ colors, there
exists a monochromatic combinatorial line.
\end{theorem}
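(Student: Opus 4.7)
The plan is to prove the Hales-Jewett theorem by induction on the alphabet size $t$, following the classical approach of Hales and Jewett or the cleaner primitive-recursive refinement due to Shelah. The base case $t = 2$ is a direct pigeonhole argument: in $C_{2}^{n}$ with $n \geq r$, consider the chain of words $w_{i} = (2,2,\ldots,2,1,1,\ldots,1)$ where the first $i$ coordinates equal $2$ and the remaining $n-i$ equal $1$, for $i = 0, 1, \ldots, n$. Among these $n+1 > r$ words two must share a color, and the combinatorial line whose active coordinates lie strictly between them is then monochromatic.

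For the inductive step, assume $HJ(r', t-1)$ is finite for every $r'$ and consider an arbitrary $r$-coloring $\chi$ of $C_{t}^{n}$ for $n$ sufficiently large. The key idea is a \emph{focusing argument}. Partition $\{1,\ldots,n\}$ into a long sequence of blocks $B_{1}, \ldots, B_{L}$, with the length of each $B_{i}$ chosen recursively in terms of $HJ(\cdot, t-1)$. For a block $B_{i}$ and any choice of coordinates outside $B_{i}$, there is a distinguished pair of words in $C_{t}^{n}$: the one taking value $t-1$ everywhere on $B_{i}$ and the one taking value $t$ everywhere on $B_{i}$. Any combinatorial line with active set $I \subseteq B_{i}$ connects two such words, so the goal is to produce many such ``upgrades'' from $t-1$ to $t$ whose endpoints share a common color.

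To produce such upgrades, I would apply the inductive hypothesis for alphabet size $t-1$ within each block: restrict to words using only the letters $\{1,\ldots,t-1\}$ on $B_{i}$, find a monochromatic combinatorial line there, and then consider the corresponding ``upgrade'' obtained by promoting the common value on its active set from $t-1$ to $t$. Cascading this operation through $L = r + 1$ blocks and applying pigeonhole on the $r$ colors, two of the constructed upgrades must share a color; concatenating them (via the focus) produces a genuinely monochromatic combinatorial line in $C_{t}^{n}$.

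The main obstacle is the careful quantitative bookkeeping: each successive application of the inductive hypothesis consumes a block whose length grows with the number of auxiliary colors introduced so far, and these sizes cascade through the induction. The resulting bound on $HJ(r,t)$ is primitive-recursive in $(r,t)$ and improving it substantially is a nontrivial problem in its own right, but any finiteness bound suffices for the present purpose (namely, as a stepping stone toward the density version needed for Proposition~\ref{p1}).
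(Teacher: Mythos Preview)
The paper does not give a proof of this theorem; it is stated as a classical result with references to the original Hales--Jewett paper and to Graham--Rothschild--Spencer, and is included only as context for the density Hales--Jewett theorem that follows (which is likewise quoted without proof and then used to derive Proposition~\ref{p1}). So there is nothing in the paper to compare your argument against.

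That said, your sketch follows the standard color-focusing induction on the alphabet size $t$, which is the classical route. Two points of imprecision are worth noting. First, the assertion that ``any combinatorial line with active set $I \subseteq B_i$ connects'' the all-$(t-1)$ word on $B_i$ to the all-$t$ word on $B_i$ is not literally correct: the non-active coordinates in $B_i\setminus I$ are fixed but need not equal $t-1$ or $t$. Second, the final pigeonhole step is not that ``two of the constructed upgrades must share a color''; rather, one arranges $r$ lines sharing a \emph{common} focus (the point where all active coordinates equal $t$), each monochromatic with a distinct color on its first $t-1$ points, so that the color of the focus is forced to coincide with one of them, yielding a fully monochromatic line. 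With those repairs your outline matches the classical argument; for the purposes of the present paper, however, a citation is all that is required.
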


The density Hales-Jewett theorem says we can always locate such a
line in the most used color. It is to the classical Hales-Jewett
theorem like Szemer\'edi's theorem is to van der Waerden's theorem. The only known proof is due to Furstenberg and Katznelson \cite{fk} and uses ergodic theory.

\begin{theorem}[Density Hales-Jewett]For every $\delta>0,t \in \Z^{+}$, there is a number $HJ(\delta,t)$ such that if $n \geq HJ(r,t)$, in every subset of size $\delta t^{n}$ of the $n$-combinatorial space on $t$ elements,
there is a combinatorial line which lies entirely in this subset.
\end{theorem}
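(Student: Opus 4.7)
The plan is to follow Furstenberg--Katznelson's ergodic strategy, which is the only known route at this level of generality. The first move is a correspondence principle tailored to the combinatorial cube: starting from a hypothetical sequence of subsets $A_n \subset [t]^n$ of density at least $\delta$ containing no combinatorial line, one constructs a probability space $(X, \mu)$ modelled on the sequence space $[t]^{\N}$, a set $B \subset X$ with $\mu(B) \geq \delta$, and a family of measure-preserving ``substitution'' maps $T_\alpha^1, \ldots, T_\alpha^t$ indexed by finite wildcard sets $\alpha$. These maps are arranged so that a combinatorial line in $A_n$ corresponds to an intersection $T_\alpha^1 B \cap \cdots \cap T_\alpha^t B$ of positive measure for some nontrivial $\alpha$; the counterexample then produces a system in which no such intersection is positive.

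The second and much harder move is to prove the corresponding multiple recurrence theorem: along any IP-system of commuting semigroup actions of the type produced above, any set of positive measure exhibits a nontrivial $t$-fold recurrence. I would establish this by induction along a tower of compact and weak-mixing extensions of the base system: compact extensions supply recurrence through almost periodicity of certain conditional expectations, and weak-mixing extensions preserve it through a van der Corput-type Hilbert space estimate, with all averages indexed by IP-rings rather than by intervals in $\N$. Combined with the first step, this contradicts the hypothetical counterexample and yields a finite value for $HJ(\delta,t)$.

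The main obstacle is unambiguously the IP-structure theorem and the induction through the weak-mixing tower: the relevant shifts form a semigroup of substitutions rather than a group action, and the wildcard structure forces one to work with IP-limits in place of Ces\`aro averages, which obstructs any direct transplantation of Furstenberg's proof of Szemer\'edi's theorem. Beyond this, one must verify that the correspondence principle genuinely matches combinatorial lines with the right recurrence events, which is delicate because combinatorial lines are defined by coincidence patterns across coordinates rather than by translations in an ambient group. Since this theorem is invoked only as a black box in what follows, I would cite \cite{fk} for the detailed verification rather than reconstruct the tower argument in full.
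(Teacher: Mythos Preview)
Your proposal sketches the Furstenberg--Katznelson ergodic approach, which is exactly what the paper has in mind: the paper does not give its own proof of this theorem at all but simply cites \cite{fk}, remarking that ``the only known proof is due to Furstenberg and Katznelson and uses ergodic theory.'' So there is nothing to compare---your outline is consistent with the reference the paper invokes, and citing \cite{fk} (as you yourself suggest at the end) is precisely what the paper does.
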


\begin{proof}[Proof of Proposition \ref{p1} using the density Hales-Jewett theorem]
We consider the set $X$ of all $k$-tuples $(a_0, a_1, \ldots,
a_{k-1})$ where $a_{i} \in \Fq$. Its cardinality is $K=q^{k}$. Let $N_0 = HJ(\delta, K)$, then $\mathbf{G}_{kN_{0}}$ can be identified with the combinatorial space of dimension $N_0$ over $X$, by identifying a polynomial with its $qN_{0}$ coefficients, divided into $N_{0}$ blocks of length $k$. By the definition of $N_{0}$, if we choose $\delta q^{N_{0}k}$ points out of $\mathbf{G}_{kN_{0}}$, there must exist a combinatorial line. It is easy to see that a combinatorial line in this space corresponds to a non-trivial $k$-configuration in $\Fq[t]$. Clearly if the statement is true $N=N_0$, then it is true for all $N \geq N_0$.
\end{proof}

A Varnavides argument shows that not only is there such a $k$-configuration, but there are in fact many of them:

\begin{proposition}\label{p2}
Given $\delta>0$. then there is a constant $c(\delta)>0$ such that,
for $N$ sufficient large, in every subset $A$ of size $\delta q^{N}$
of $\GN$, we can find at least $c(\delta) q^{2N}$ $k$-configurations.
\end{proposition}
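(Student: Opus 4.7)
My approach is a standard Varnavides-type averaging argument built on top of Proposition \ref{p1}. I would first fix $N_{0} = N_{0}(q, k, \delta/2)$ from Proposition \ref{p1}, so that every subset of $\mathbf{G}_{N_{0}}$ of size at least $\tfrac{\delta}{2} q^{N_{0}}$ contains a non-trivial $k$-configuration. Since $N_0$ depends only on $q, k, \delta$, powers of $q^{N_0}$ will be absolute constants in what follows. I would then consider the family of ``affine $N_0$-dimensional subspaces'' $\Sigma_{a, b} = \{a + R b : R \in \mathbf{G}_{N_{0}}\} \subset \GN$, parameterized by pairs $(a, b) \in \GN \times (\mathbf{G}_{N - N_{0}} \setminus \{0\})$; the degree condition $\d(b) < N - N_{0}$ guarantees $\Sigma_{a,b} \subset \GN$ (no wraparound), and $b \neq 0$ makes $|\Sigma_{a,b}| = q^{N_{0}}$.

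The first substantive step is an averaging: for each fixed $R \in \mathbf{G}_{N_{0}}$ and $b$, the map $a \mapsto a + Rb$ is a bijection of $\GN$, so Fubini gives $\E_{(a, b)} |A \cap \Sigma_{a, b}| = \delta q^{N_{0}}$. Combined with the pointwise bound $|A \cap \Sigma_{a, b}| \leq q^{N_{0}}$, a standard first-moment argument delivers at least a $\delta/2$-fraction of \emph{good} pairs, where ``good'' means $|A \cap \Sigma_{a, b}| \geq \tfrac{\delta}{2} q^{N_{0}}$; equivalently, at least $\gg \delta\, q^{2N - N_{0}}$ good pairs. To each good pair, Proposition \ref{p1} furnishes a non-trivial $k$-configuration $(f', g') \in \mathbf{G}_{N_{0}} \times (\mathbf{G}_{N_{0}} \setminus \{0\})$ in the pullback $\{R \in \mathbf{G}_{N_{0}} : a + R b \in A\}$, which transports to a non-trivial $k$-configuration $(f, g) = (a + f'b,\, g'b)$ in $A$.

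The final ingredient will be a multiplicity count: given a target $(f, g)$ in $A$, I must bound the number of quadruples $(a, b, f', g')$ that can produce it. The identity $g = g' b$ makes $(g', b)$ a factorization of $g$ in $\Fq[t]$ with $g' \in \mathbf{G}_{N_{0}} \setminus \{0\}$; crucially this forces $g'$ into a set of size at most $q^{N_{0}}$, and once $g'$ is chosen, $b$ is determined. For each factorization, $f' \in \mathbf{G}_{N_{0}}$ is free (up to $q^{N_{0}}$ choices) and then $a = f - f'b$ is determined, so the multiplicity is at most $q^{2 N_{0}}$. Dividing the good-pair count by this multiplicity gives at least $\gg \delta\, q^{-3 N_{0}} \cdot q^{2N}$ distinct non-trivial $k$-configurations in $A$, which is the desired $\geq c(\delta) q^{2N}$ with $c(\delta) = \Omega(\delta\, q^{-3 N_{0}})$, a positive quantity depending only on $\delta, k, q$.

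The step I expect to be the only subtle one is the multiplicity bound: a naive estimate through the divisor count $\tau(g)$ would be useless in $\Fq[t]$, but the constraint $g' \in \mathbf{G}_{N_{0}}$ caps the number of eligible divisors of $g$ by the absolute constant $q^{N_{0}}$, regardless of how large $\tau(g)$ may be. This is precisely the function-field analogue of the stretch-factor bound $m \leq (N_0 - 1)/(k-1)$ that appears in the integer Varnavides argument.
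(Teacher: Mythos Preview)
Your proposal is correct and follows essentially the same Varnavides averaging scheme as the paper: fix $N_{0}=N_{0}(q,k,\delta/2)$ from Proposition~\ref{p1}, average over $N_{0}$-configurations (the paper's ``$m$-configurations'' are exactly your $\Sigma_{a,b}$), apply Proposition~\ref{p1} to the dense ones, and divide by a multiplicity bound. Your multiplicity bound $q^{2N_{0}}$ is a bit cruder than the paper's claimed $q^{N_{0}-k}$, giving $c(\delta)\asymp \delta\,q^{-3N_{0}}$ rather than $\delta\,q^{-2N_{0}+k}$, but this is irrelevant for the statement; in fact your write-up is more careful about wraparound (the restriction $b\in\mathbf{G}_{N-N_{0}}$) than the paper's sketch.
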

\begin{proof}
Let $m=N_0(q,k,\frac{\delta}{2})$, where $N_0$ is the function in Proposition \ref{p1}, and suppose $N\geq m$. We claim that among $q^{2N-m}$ $m$-configurations in $\GN$, there are at least $\geq \frac{\delta}{2}q^{2N-2m}$ of them on which the density of $A$ is at least $\frac{\delta}{2}$.

Indeed, let us count the number of pairs $(V,h)$ where $V$ is a $m$-configuration in $\GN$ and $h \in A \cap V$. On the one hand, since for every given point in $\GN$ there are $q^{N-m}$ $m$-configurations in $\GN$ containing it (why?), the number of such pairs is $\delta q^{N}q^{N-m}=\delta q^{2N-m}$. Each $k$-configuration $V$ on which the density of $A$ is at most $\delta/2$ contributes at most $\frac{\delta}{2} q^{m}q^{2N-2m}=\frac{\delta}{2}q^{2N-m}$ pairs. Thus the contribution of those $V$ on which the density of $A$ is at least $\delta/2$ is at least $\delta q^{2N-m}-\frac{\delta}{2} q^{2N-m}=\frac{\delta}{2}q^{2N-m}$. Therefore, the number of $m$-configuration $V$ on which the density of $A$ is $\geq \frac{\delta}{2}$ is at least $\frac{\delta}{2}q^{2N-2m}$.

From the definition of $m$, each such $m$-configuration (with the exception of at most $q^{N}$ trivial $m$-configurations) contains a non-trivial $k$-configuration. The number of times a given $k$-configuration is counted is at most $q^{m-k}$ (why?). Thus the number of non-trivial $k$-configurations in $\GN$ is at least $q^{k-m}(\frac{\delta}{2}q^{2N-m}-q^{N}) \gg_{\delta} q^{2N}$, as desired.
\end{proof}

From this Theorem \ref{FFSzemeredi} easily follows.
\begin{proof}[Proof of Theorem \ref{FFSzemeredi}]
Suppose $\E(\phi | \GN) \geq \delta$. Let $B\subset\GN$ be the set on which $\phi$ is $\geq \delta/2$. Then $|B|+\frac{\delta}{2}(q^{N}-|B|) \geq \delta q^{N}$, so that $|B| \geq \frac{\delta}{2}q^{N}$. Proposition \ref{p2} implies that $B$ contains at least $c(\frac{\delta}{2})q^{2N}$ $k$-configurations. Thus $\E \left(\prod_{P \in \Gk} \phi(f+Pg)| f, g \in \FqN \right) \geq (\frac{\delta}{2})^{k}c(\frac{\delta}{2})$.
\end{proof}

\begin{remark}
In contrast with the usual Szemer\'edi theorem for the integers,
where we have a quantitative proof due to Gowers \cite{gowers-4},
\cite{gowers}, the proofs of the Bergelson-Leibman-McCutcheon
theorem uses ergodic theory and
therefore do not give any bound of $c(\delta)$ in terms of $\delta$. As for the density Hales-Jewett Theorem, As for the density Hales-Jewett theorem, just until very recently, Gowers et. al. \cite{polymath} announced to have found combinatorial proofs, from which some bounds might be extracted, but still far weaker than what is known for the integers. Thus we don't seek to find a bound for the first occurrence in the
irreducible polynomials of the configurations $\{f+Pg| P \in \Gk\}$.
\end{remark}

\section{Pseudorandom measures} \label{S4}

A measure is a function\footnote{More precisely, it is a family $\{\nu_{N}\}_{N \in \Z^{+}}$ such that for each $N$,  $\nu_{N}$ is a function from  $\FqN \rightarrow \R$.} $\nu : \FqN \rightarrow [0, \infty)$. Strictly speaking, $\nu \mu$ should be called a measure rather than $\nu$, where $\mu$ is the normalized counting measure on $\GN$ (and therefore we should think of $\nu$ as the Radon-Nikodym derivative of a random measure with respect to the normalized counting measure). However, we keep this naming to be consistent with \cite{gt-primes}. A pseudorandom measure is a measure satisfying the two conditions defined below:

\begin{definition}[Linear forms condition]We say that a measure $\nu: \FqN \rightarrow \infty$ satisfies the $(m_0,n_0,k_0)$-linear forms condition if whenever we have $m\leq m_0$ linear forms in $n\leq n_0$ variables $\psi_1,\ldots, \psi_{m}:(\FqN)^{n} \rightarrow \GN$ of the form
$$\psi_{i}(\mathbf{f})=\sum_{j=1}^{n}L_{ij}f_{i}+b_{i}$$
such that all the coefficients $L_{ij}$ are in the set\footnote{Note that this set can be embedded in to $\FqN$ for every $N \geq k_{0}$. Of course, the embedding depends on our choice of the irreducible polynomial $f_{N}$ underlying $\FqN$.} $\{ \frac{f}{g}|f,g \in \mathbf{G}_{k_0} \}$, and no two of the vectors $(L_{ij})_{1\leq j \leq n}, i=1,\ldots,m$, are proportional, then we have
\begin{equation} \label{linear forms}
 \E \left( \nu( \psi_1( \mathbf{f}) ) \ldots \nu ( \psi_{m}(\mathbf{f}) ) | \mathbf{f}\in \left( \FqN \right)^{n} \right)  = 1+o_{N \rightarrow \infty}(1)
\end{equation}
\end{definition}
In particular $\nu$ satisfies the linear forms condition then $\E (\nu (f) | f \in \GN) = 1+o(1)$.
Note that we require the $o_{N \rightarrow \infty}(1)$ to be uniform in all choices of $b_1, \ldots, b_{m} \in\FqN$.

\begin{definition}[Correlation condition] We say that a measure $\nu: \FqN \rightarrow \infty$ satisfies the $l_0$-correlation condition if whenever we have $l \leq l_0$ linear forms of the form $f+h_1, \ldots f+h_{l}$ with $h_1, h_2, \ldots h_{l} \in \FqN$ , then
\begin{equation} \label{correlation}
\E \left( \nu(f+h_1) \ldots \nu(f+h_{l}) | f \in \FqN \right) \leq \sum_{1 \leq i \leq j \leq q } \tau(h_{i}-h_{j})
\end{equation}
where $\tau$ is a function $\GN \rightarrow \R^{+}$ having the property that $\E(\tau(f)^p | f\in \FqN) =O_{p}(1)$ for every $p>1$.
\end{definition}
The point is that the function $\tau$ is not necessarily bounded as $N$ tends to infinity, but its $L^{p}$-norm is always bounded.

\begin{definition}[Pseudorandom measures] A measure $\nu: \FqN \rightarrow \infty$ is called $k$-pseudorandom if it satisfies the $(K2^{K-1},3K-4,k)$-linear forms condition and the $2^{K-1}$-correlation condition (recall that $K=q^{k}$).
\end{definition}
\begin{remark}
The exact values of the parameters $m_0,l_0,k_0,l_0$ are not important, since we will see that for any $(m_0,l_0,k_0,l_0)$, we can find a measure that satisfies the $(m_0,l_0,k_0)$-linear forms condition and the $l_0$-correlation condition. However, it is essential in the construction that these values are finite.
\end{remark}
From now on we will refer to $k$-pseudorandom measures as pseudorandom measures.
\begin{lemma}\label{nu+1}
If $\nu$ is pseudorandom, then so is $\nu_{1/2}=(\nu+1)/2$. More generally, for any $0<\alpha<1$, $\nu_{\alpha}=(1-\alpha)\nu+\alpha$ is also pseudorandom.
\end{lemma}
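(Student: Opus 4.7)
The plan is to verify that $\nu_\alpha=(1-\alpha)\nu+\alpha$ satisfies both the linear forms condition with parameters $(K2^{K-1},3K-4,k)$ and the $2^{K-1}$-correlation condition, by expanding the product over $i$ of $[(1-\alpha)\nu(\cdot)+\alpha]$ using the distributive law and reducing to the corresponding properties of $\nu$.

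For the linear forms condition, given $m\le m_0$ linear forms $\psi_1,\dots,\psi_m$ in $n\le n_0$ variables satisfying the hypotheses (coefficients in $\{f/g:f,g\in\mathbf{G}_{k_0}\}$, pairwise non-proportional coefficient vectors), I would write
\[
\prod_{i=1}^{m}\nu_\alpha(\psi_i(\mathbf{f}))=\sum_{S\subseteq\{1,\dots,m\}}(1-\alpha)^{|S|}\alpha^{m-|S|}\prod_{i\in S}\nu(\psi_i(\mathbf{f})),
\]
and take expectations. For each $S$, the sub-family $\{\psi_i:i\in S\}$ inherits both the coefficient constraint and the non-proportionality condition, so the linear forms condition for $\nu$ applies (with $|S|\le m\le m_0$) and gives $\E(\prod_{i\in S}\nu(\psi_i(\mathbf{f})))=1+o(1)$, uniformly in the constant terms $b_i$. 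Summing over $S$ and using the binomial identity $\sum_{S}(1-\alpha)^{|S|}\alpha^{m-|S|}=((1-\alpha)+\alpha)^m=1$ yields $\E(\prod_i\nu_\alpha(\psi_i(\mathbf{f})))=1+o(1)$, as required.

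For the correlation condition, given $l\le 2^{K-1}$ shifts $h_1,\dots,h_l$, I would again expand
\[
\prod_{i=1}^{l}\nu_\alpha(f+h_i)=\sum_{S\subseteq\{1,\dots,l\}}(1-\alpha)^{|S|}\alpha^{l-|S|}\prod_{i\in S}\nu(f+h_i).
\]
Terms with $|S|\ge 2$ are bounded, after taking expectation, by the correlation condition for $\nu$: each such term is dominated by $(1-\alpha)^{|S|}\alpha^{l-|S|}\sum_{i<j,\ i,j\in S}\tau(h_i-h_j)\le\sum_{1\le i<j\le l}\tau(h_i-h_j)$. Terms with $|S|\le 1$ only involve at most one factor $\nu$, so by the linear forms condition (in a single variable) their expectations are at most $1+o(1)$, contributing an additive constant $C=C(\alpha,l_0)$. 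To absorb this constant into a correlation-type bound, define $\widetilde\tau(x):=C'(1+\tau(x))$ with $C'$ large enough that the total contribution of the $|S|\le 1$ terms is at most $\sum_{i<j}C'\cdot 1\le\sum_{i<j}\widetilde\tau(h_i-h_j)$, and the contribution of the $|S|\ge 2$ terms is bounded by $\sum_{i<j}\widetilde\tau(h_i-h_j)$ as well. Since $\|\widetilde\tau\|_p\le C'(1+\|\tau\|_p)=O_p(1)$, the new weight $\widetilde\tau$ has bounded moments of all orders.

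There is no real obstacle here: the proof is a purely formal expansion plus bookkeeping. The only mildly subtle point is handling the $|S|\le 1$ terms in the correlation bound, which contribute an $O(1)$ constant rather than something proportional to $\sum_{i<j}\tau(h_i-h_j)$; this is resolved by enlarging $\tau$ to $\widetilde\tau=C'(1+\tau)$, which preserves the only property of $\tau$ that is ever used, namely the uniform bound on its $L^p$ moments. The argument generalizes verbatim to any convex combination of pseudorandom measures, and in particular to $\nu_{1/2}=(\nu+1)/2$.
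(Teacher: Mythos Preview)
Your proof is correct and follows the same approach as the paper: expand the product $\prod_i[(1-\alpha)\nu(\psi_i)+\alpha]$ by the distributive law and reduce to the corresponding conditions for $\nu$. The paper's proof is terser (it only says the correlation condition is ``checked similarly''), whereas you spell out the one point the paper leaves implicit, namely that the subsets $S$ with $|S|\le 1$ contribute an additive $O(1)$ which must be absorbed by passing from $\tau$ to $\widetilde\tau=C'(1+\tau)$; this is exactly the right fix and preserves all $L^p$ bounds.
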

In practice we will be dealing with $\nu_{1/2}$ and $\nu_{1/4}$.
\begin{proof}
Suppose we want to verify the condition (\ref{linear forms}) for $\nu+1$. If we expand the product of $m$ factors $(\nu(\psi_{i})+1)$, we will have $2^{m}$ terms, each of them is a product of $m$ or fewer factors of the form $\nu(\psi_{i})$. By the linear forms condition for $\nu$ we know that each such term is $1+o(1)$, thus the linear forms condition for $\nu_{1/2}$ follows (with possibly different $o(1)$ term). The correlation condition is checked similarly. The proof for $\nu_{\alpha}$ is similar.
\end{proof}
\section{Gowers norms} \label{S5}
One efficient tool in counting linear patterns is Gowers norms. The Gowers norms are first used by Gowers in his proof of Szemer\'edi's theorem \cite{gowers-4}, \cite{gowers}. It has a parallel counterpart in ergodic theory, known as the Host-Kra seminorm \cite{hk}.
\begin{definition}[Gowers norm] Let $G$ be a finite abelian group, $\phi$ a complex-valued function
defined on $G$. For $\mathbf{\omega} = (\omega_1, \ldots \omega_{d})
\in \{0,1 \}^{d}$, let $|\mathbf{\omega}|=\omega_1 + \ldots +
\omega_{d}$. Also, let $C$ be the complex conjugation. We define
the $d$-th Gowers norm of $\phi$ to be
$$\| \phi \|_{U^{d}(G)}=\left(\E_{x,h_1,\ldots,h_{k} \in G} \prod_{\mathbf{\omega} \in \{0,1\}^{d}} C^{|\mathbf{\omega}|}\phi(x+\omega_1 h_1+ \ldots +\omega_{d} h_{d})
\right)^{1/2^{d}}.$$
\end{definition}
Alternatively, the Gowers norms $\| \cdot \|_{U^{d}(G)}$ can be defined recursively as follows:
$$\| \phi \|_{U^{1}(G)}=|\E(\phi|G)|$$
$$\| \phi \|_{U^{d+1}(G)}^{2^{d+1}}=\E \left( \| \phi \cdot \phi_{t} \|_{U^{d}(G)}^{2^{d}}| t \in G \right)$$
where $\phi_{t}$ is the function $\phi_{t}(x)=\phi(t+x)$.

The following facts about the Gowers norms are standard and the proofs can be found in \cite{gt-primes} or \cite{gt-inverse-u3}

\begin{proposition}[Gowers-Cauchy-Schwarz inequality]
Suppose $\phi_{\mathbf{\omega}}$, for $\mathbf{\omega} \in \{0,1\}^{d}$, are $2^{d}$ functions: $G \rightarrow \C$. Then
$$\E \left( \prod_{\mathbf{\omega} \in \{0,1\}^{d}} \phi_{\mathbf{\omega}}(x+\omega_1 h_1 + \cdots +\omega_{d} h_{d})\right) \leq \prod_{\mathbf{\omega} \in \{0,1\}^{d}}\| \phi_{\mathbf{\omega}} \|_{U^{d}(G)} $$
\end{proposition}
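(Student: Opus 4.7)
The plan is to argue by induction on $d$, in the now-standard way, via repeated applications of the Cauchy--Schwarz inequality. Denote the left-hand side by $\Lambda$. For the base case $d=1$, the substitution $y=x+h_1$ decouples the two variables, giving $\Lambda=(\E\phi_{(0)})(\E\phi_{(1)})=\|\phi_{(0)}\|_{U^1}\|\phi_{(1)}\|_{U^1}$ by the definition of the $U^1$-norm.

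For the inductive step, I group the indices as $\{0,1\}^d=\{0,1\}^{d-1}\times\{0,1\}$, writing $\omega=(\omega',\omega_d)$ and $h=(h',h_d)$. For each fixed $h_d$, introduce the ``doubled'' functions
$$\Psi^{h_d}_{\omega'}(y)=\phi_{(\omega',0)}(y)\,\phi_{(\omega',1)}(y+h_d),\qquad \omega'\in\{0,1\}^{d-1},$$
so that $\Lambda=\E_{h_d}\,\E_{x,h'}\prod_{\omega'}\Psi^{h_d}_{\omega'}(x+\omega'\cdot h')$, an $h_d$-average of $(d-1)$-dimensional Gowers inner products of the $\Psi^{h_d}_{\omega'}$'s. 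Applying the inductive hypothesis pointwise in $h_d$ and then H\"older's inequality across the $h_d$-average (the $2^{d-1}$ factors balance exactly against the exponent $2^{d-1}$) yields
$$|\Lambda|^{2^{d-1}}\;\le\;\prod_{\omega'\in\{0,1\}^{d-1}}\E_{h_d}\|\Psi^{h_d}_{\omega'}\|_{U^{d-1}}^{2^{d-1}}.$$

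For each fixed $\omega'$, I then unfold the definition of $\|\Psi^{h_d}_{\omega'}\|_{U^{d-1}}^{2^{d-1}}$ and rewrite the resulting $d$-variable expression as $\E_{y,z,h_d}A(y,z)B(y+h_d,z)$, where $A$ involves only $\phi_{(\omega',0)}$ and $B$ only $\phi_{(\omega',1)}$; the substitution $w=y+h_d$ decouples the $y$- and $h_d$-averages, and one final Cauchy--Schwarz in the remaining $z=(h''_1,\dots,h''_{d-1})$ variables identifies the two surviving factors as $\|\phi_{(\omega',0)}\|_{U^d}^{2^d}$ and $\|\phi_{(\omega',1)}\|_{U^d}^{2^d}$. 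This bounds each $\omega'$-factor by $\|\phi_{(\omega',0)}\|_{U^d}^{2^{d-1}}\|\phi_{(\omega',1)}\|_{U^d}^{2^{d-1}}$; combining over $\omega'$ and taking $2^{d-1}$-th roots gives exactly $\prod_{\omega\in\{0,1\}^d}\|\phi_\omega\|_{U^d}$, closing the induction.

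The main difficulty is not analytical but notational: one must keep the $2^d$ factors, the conjugation signs $C^{|\omega|}$ hard-wired into the definition of $\|\cdot\|_{U^d}$, and the reindexing between $\omega'$ (the first $d-1$ coordinates) and $\omega''$ (the inner Gowers index) straight through every Cauchy--Schwarz and H\"older step. Once the product decomposition $\{0,1\}^d=\{0,1\}^{d-1}\times\{0,1\}$ is fixed and the substitutions $y\mapsto y+h_d$ are executed carefully (so that $h_d$ plays the role of the missing $d$-th difference in the final Cauchy--Schwarz), the argument becomes a mechanical iteration, and agrees with the treatments given in \cite{gt-primes} and \cite{gt-inverse-u3}.
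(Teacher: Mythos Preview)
The paper does not actually supply a proof of this proposition: it is listed among the ``standard'' facts about Gowers norms whose proofs are deferred to \cite{gt-primes} and \cite{gt-inverse-u3}. Your inductive Cauchy--Schwarz/H\"older argument is precisely the standard proof found in those references, so there is nothing to compare. One small quibble: in the base case you write $\Lambda=\|\phi_{(0)}\|_{U^1}\|\phi_{(1)}\|_{U^1}$, but $\Lambda=(\E\phi_{(0)})(\E\phi_{(1)})$ need not be real or nonnegative, whereas $\|\phi\|_{U^1}=|\E\phi|$; you should write $|\Lambda|$ on the left (and similarly work with $|\Lambda|$ throughout, as you implicitly do when you later invoke H\"older).
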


\begin{proposition}
For every $\phi$, the sequence $\| \phi \|_{U^{d}(G)}, d=1,2,\ldots $ is an increasing sequence. In particular for every $d \geq 1,\| \phi \|_{U^{d}(G)}\geq \| \phi \|_{U^{1}(G)}=| \E(\phi|G)|$.
\end{proposition}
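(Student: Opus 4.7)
The plan is to deduce the monotonicity from the Gowers-Cauchy-Schwarz inequality stated just above, by a clever choice of test functions. The key preliminary observation is that the constant function $\mathbf{1}\equiv 1$ on $G$ satisfies $\|\mathbf{1}\|_{U^{d}(G)}=1$ for every $d$, which follows immediately from either form of the definition (each factor in the $2^d$-fold product is identically $1$).

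Given $d\geq 1$, to show $\|\phi\|_{U^{d}(G)}\leq \|\phi\|_{U^{d+1}(G)}$, I would apply Gowers-Cauchy-Schwarz at level $d+1$ to the $2^{d+1}$ test functions defined by $\phi_{\omega}=\phi$ whenever $\omega_{d+1}=0$, and $\phi_{\omega}=\mathbf{1}$ whenever $\omega_{d+1}=1$. On the left-hand side, each factor with $\omega_{d+1}=1$ is identically $1$, so only the $2^d$ factors indexed by $\omega$ with $\omega_{d+1}=0$ survive; these factors depend only on $x,h_{1},\ldots,h_{d}$, and after integrating out the now-absent variable $h_{d+1}$, the left-hand side collapses to exactly $\|\phi\|_{U^{d}(G)}^{2^{d}}$. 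On the right-hand side, the product factors as $\|\phi\|_{U^{d+1}(G)}^{2^{d}}\cdot \|\mathbf{1}\|_{U^{d+1}(G)}^{2^{d}}=\|\phi\|_{U^{d+1}(G)}^{2^{d}}$. Extracting a $2^{d}$-th root yields the desired step.

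Iterating produces the monotone chain $\|\phi\|_{U^{1}(G)}\leq \|\phi\|_{U^{2}(G)}\leq \cdots$, and the ``in particular'' clause follows at once from the recursive base case $\|\phi\|_{U^{1}(G)}=|\E(\phi|G)|$. There is no substantive obstacle: once Gowers-Cauchy-Schwarz is at hand, the entire content of the argument is the insertion of the auxiliary constant function $\mathbf{1}$ along the last coordinate axis, which kills exactly half of the factors on the left while contributing the harmless factor $\|\mathbf{1}\|_{U^{d+1}(G)}=1$ on the right.
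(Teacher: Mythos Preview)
Your argument is correct and is exactly the standard proof: insert the constant function $\mathbf{1}$ on the face $\omega_{d+1}=1$, apply Gowers--Cauchy--Schwarz at level $d+1$, and read off $\|\phi\|_{U^{d}}^{2^{d}}\leq \|\phi\|_{U^{d+1}}^{2^{d}}$. The paper does not supply its own proof of this proposition at all; it simply declares the fact ``standard'' and defers to \cite{gt-primes} and \cite{gt-inverse-u3}, where precisely this argument appears.

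One small remark on presentation: the Gowers--Cauchy--Schwarz inequality as written in the paper omits both the absolute value on the left and the complex conjugations $C^{|\omega|}$. For real-valued $\phi$ (which is all the paper ever uses) this is immaterial, and your collapse of the left-hand side to $\|\phi\|_{U^{d}}^{2^{d}}$ is literally correct. For complex $\phi$ one needs the full form of the inequality with the conjugations built into the Gowers inner product; with that version your choice of test functions still gives the desired conclusion, since $C^{|\omega|}\mathbf{1}=\mathbf{1}$ and the surviving factors on the $\omega_{d+1}=0$ face carry exactly the conjugation pattern of the $U^{d}$ norm.
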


\begin{proposition}
For every $d \geq 2, \| \cdot \|_{U^{d}(G)}$ is indeed a norm on $\C^{G}$, the space of complex functions on $G$.
\end{proposition}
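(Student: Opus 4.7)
I need to check the four norm axioms on $\C^G$: non-negativity (including realness) of $\|\phi\|_{U^d}$, absolute homogeneity, the triangle inequality, and non-degeneracy $\|\phi\|_{U^d}=0\Rightarrow \phi\equiv 0$. The first three will fall out quickly from the definition and from the Gowers-Cauchy-Schwarz inequality stated above. The only genuinely nontrivial axiom is non-degeneracy, and that is the step I expect to be the main obstacle.

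\textbf{Homogeneity and non-negativity.} For homogeneity, replacing $\phi$ by $\lambda\phi$ in the defining product multiplies each of the $2^{d}$ factors by $\lambda$ or $\bar\lambda$ according as $|\omega|$ is even or odd. Exactly $2^{d-1}$ values of $\omega\in\{0,1\}^{d}$ lie in each parity class, so the product scales by $\lambda^{2^{d-1}}\bar\lambda^{2^{d-1}}=|\lambda|^{2^{d}}$, and extracting the $2^{d}$-th root gives $\|\lambda\phi\|_{U^d}=|\lambda|\,\|\phi\|_{U^d}$. That $\|\phi\|_{U^d}^{2^d}$ is a non-negative real number I prove by induction on $d$ from the recursive formula: the base case is $\|\phi\|_{U^1}^{2}=|\E(\phi|G)|^{2}\geq 0$, and the inductive step writes $\|\phi\|_{U^{d+1}}^{2^{d+1}}$ as an average over $t\in G$ of a quantity already known to be a non-negative real. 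Hence the $2^d$-th root is well-defined.

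\textbf{Triangle inequality.} Expand $\phi+\psi$ at each vertex $\omega$ of the combinatorial cube $\{0,1\}^{d}$: the quantity $\|\phi+\psi\|_{U^d}^{2^d}$ becomes a sum, over the $2^{2^d}$ assignments $\omega\mapsto f_\omega\in\{\phi,\psi\}$, of the expectations $\E_{x,h_1,\ldots,h_d}\prod_{\omega} C^{|\omega|}f_\omega(x+\omega_1 h_1+\cdots+\omega_d h_d)$. Absorbing the conjugations $C^{|\omega|}$ into the $f_\omega$'s (which leaves the $U^d$ norms unchanged), the Gowers-Cauchy-Schwarz inequality bounds each such expectation by $\prod_\omega \|f_\omega\|_{U^d}$. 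Summing over the $2^{2^d}$ choices factors as $(\|\phi\|_{U^d}+\|\psi\|_{U^d})^{2^d}$, and extracting $2^d$-th roots gives the triangle inequality.

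\textbf{Non-degeneracy.} By the preceding monotonicity proposition, $\|\phi\|_{U^d}\geq \|\phi\|_{U^2}$ for every $d\geq 2$, so it suffices to show $\|\phi\|_{U^2}=0\Rightarrow \phi\equiv 0$. For this I invoke Fourier analysis on the finite abelian group $G$: writing $\phi=\sum_{\xi\in\hat G}\hat\phi(\xi)\xi$, expanding the four factors in the definition of $\|\phi\|_{U^2}^{4}$, and using orthogonality of characters (the constraint $\xi_1+\xi_4=\xi_2+\xi_3$ coming from $x$ forces $\xi_1=\xi_2=\xi_3=\xi_4$ once the $h_1,h_2$ averages are taken) yields the identity
$$\|\phi\|_{U^{2}}^{4}=\sum_{\xi\in\hat G}|\hat\phi(\xi)|^{4}.$$
Hence $\|\phi\|_{U^2}=0$ forces $\hat\phi\equiv 0$, whence $\phi\equiv 0$ by Fourier inversion. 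This paragraph is the only place a tool beyond the cube combinatorics is invoked, and so it is where I expect the bulk of the work to lie.
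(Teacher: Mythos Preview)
The paper does not actually prove this proposition; it simply states that the facts about Gowers norms are standard and refers the reader to \cite{gt-primes} and \cite{gt-inverse-u3}. Your argument is correct and is precisely the standard one found in those references: homogeneity is immediate from the parity count of $|\omega|$, non-negativity follows from the recursive definition by induction, the triangle inequality is obtained by multilinear expansion together with the Gowers--Cauchy--Schwarz inequality, and non-degeneracy is reduced via monotonicity to the $U^{2}$ case, where the Fourier identity $\|\phi\|_{U^{2}}^{4}=\sum_{\xi\in\hat G}|\hat\phi(\xi)|^{4}$ settles it.

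One small remark on the triangle-inequality step: after the multilinear expansion the individual cross terms need not be real, so strictly speaking you should insert an absolute value (the full sum is real and non-negative, hence equals its own modulus) before applying the triangle inequality and Gowers--Cauchy--Schwarz termwise. This is implicit in what you wrote but worth making explicit.
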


Henceforth, if the context is clear, we will assume $G=\FqN$ and omit the group in the notation of the Gowers norm. In practice we will be dealing with the $U^{K-1}$ norm, where $K=q^{k}$.

Our first observation is that a pseudorandom measure is close to the uniform measure in the $U^{K-1}$ norm.

\begin{lemma}\label{n}
Let $\nu$ be a pseudorandom measure on $\FqN$. Then $\|\nu-1\|_{U^{K-1}}=o(1)$. Consequently, $\|\nu\|_{U^{K-1}}=1+o(1)$.
\end{lemma}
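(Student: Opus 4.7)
The plan is to expand the $(K-1)$-th Gowers norm to the power $2^{K-1}$ and exploit the pseudorandomness by applying the linear forms condition to each term in a binomial expansion.

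First, by definition,
\begin{equation*}
\|\nu-1\|_{U^{K-1}}^{2^{K-1}} = \E_{x,h_1,\ldots,h_{K-1} \in \FqN} \prod_{\omega \in \{0,1\}^{K-1}} (\nu-1)(x+\omega_1 h_1+\cdots+\omega_{K-1}h_{K-1}),
\end{equation*}
where we have dropped the complex conjugations since $\nu-1$ is real-valued. Expanding each factor $\nu-1$, the product becomes a sum over subsets $S \subseteq \{0,1\}^{K-1}$, giving
\begin{equation*}
\|\nu-1\|_{U^{K-1}}^{2^{K-1}} = \sum_{S \subseteq \{0,1\}^{K-1}} (-1)^{2^{K-1}-|S|}\, \E_{x,h_1,\ldots,h_{K-1}} \prod_{\omega \in S} \nu(x+\omega_1 h_1+\cdots+\omega_{K-1}h_{K-1}).
\end{equation*}

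Next, I would apply the linear forms condition to each inner expectation. The linear forms involved are $\psi_\omega(x,h_1,\ldots,h_{K-1}) = x+\omega_1 h_1+\cdots+\omega_{K-1}h_{K-1}$, indexed by $\omega \in S \subseteq \{0,1\}^{K-1}$. There are at most $2^{K-1} \leq K 2^{K-1}$ such forms in $K \leq 3K-4$ variables (since $K \geq 2$), all coefficients lie in $\{0,1\} \subseteq \Gk$, and any two of them are non-proportional because each one has coefficient $1$ on the variable $x$, so proportionality forces equality of the $\omega$'s. Hence $\nu$'s $(K 2^{K-1},3K-4,k)$-linear forms condition applies and yields
\begin{equation*}
\E_{x,h_1,\ldots,h_{K-1}} \prod_{\omega \in S} \nu(\psi_\omega) = 1+o(1)
\end{equation*}
for every $S$ (including $S=\emptyset$, by convention). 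Plugging this into the sum above, the main terms combine to give $\sum_{S \subseteq \{0,1\}^{K-1}}(-1)^{2^{K-1}-|S|} = 0$ (since $\{0,1\}^{K-1}$ is non-empty), leaving only the $o(1)$ error (summed over a bounded number of terms). Therefore $\|\nu-1\|_{U^{K-1}}^{2^{K-1}} = o(1)$, which gives $\|\nu-1\|_{U^{K-1}}=o(1)$.

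For the consequence, since $U^{K-1}$ is a genuine norm (assuming $K-1 \geq 2$; the degenerate case $K=2$ is handled directly via $\|\nu\|_{U^1}=|\E\nu|=1+o(1)$), the triangle inequality gives
\begin{equation*}
\bigl| \|\nu\|_{U^{K-1}} - \|1\|_{U^{K-1}} \bigr| \leq \|\nu-1\|_{U^{K-1}} = o(1),
\end{equation*}
and since $\|1\|_{U^{K-1}}=1$, we conclude $\|\nu\|_{U^{K-1}}=1+o(1)$.

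The only potential obstacle is bookkeeping: one must check that the non-proportionality hypothesis of the linear forms condition is actually satisfied for the family $\{\psi_\omega\}_\omega$, and that the parameters fit within the $(K2^{K-1},3K-4,k)$ budget permitted for a $k$-pseudorandom measure. Both checks are straightforward as outlined, so the proof really is just a binomial-expansion trick combined with the definition of pseudorandomness.
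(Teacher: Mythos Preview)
Your proof is correct and follows exactly the same approach as the paper: expand the Gowers norm as an average over parallelepipeds, binomially expand each factor $\nu-1$, apply the linear forms condition to every resulting term, and observe that the alternating sum of the main terms vanishes. Your explicit verification that the forms $\psi_\omega$ fit within the $(K2^{K-1},3K-4,k)$ parameters and are pairwise non-proportional is a welcome addition that the paper's sketch omits.
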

\begin{proof}The proof is exactly the same as in Lemma 5.2 in \cite{gt-primes}, so we reproduce it briefly here for the case $K=3$, i.e. $\|\nu-1\|_{U^2}=o(1)$. Equivalently, we have to show
$$\|\nu-1\|_{U^2}^{4}=\E \left( (\nu(f)-1)(\nu(f+h_1)-1)(\nu(f+h_2)-1)(\nu(f+h_1+h_2)-1) \Big| f,h_1,h_2 \in \FqN \right)=o(1)$$
If we expand the expectation, we will have a sum of 16 terms, each term is the expectation of a product of $\nu$ composed with linear forms. By the linear forms condition, each term is $1+o(1)$. Thus our expression is $\sum_{I \subset \{1,2\}}(-1)^{|I|}(1+o(1))=1+o(1)$, as required.
\end{proof}
As mentioned before, Gowers norms are effective in counting linear patterns, as witnessed by the following

\begin{proposition}[Generalized von Neumann\footnote{Green and Tao call this type of inequalities generalized von Neumann theorems to emphasize their connection with the classical von Neumann theorem in ergodic theory.}] \label{gvN}
Suppose $\nu$ is pseudorandom. Let $(\phi_{P})_{P\in \Gk}$ be functions bounded in absolute value by $\nu$. Then
$$\E \left( \prod_{P \in \Gk}\phi_{P}(f+Pg) \Big| f,g \in \FqN \right) \leq \min_{P} \| \phi_{P} \|_{U^{K-1}}+o_{N \rightarrow \infty}(1)$$
\end{proposition}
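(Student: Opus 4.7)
The plan is to apply the Cauchy–Schwarz inequality iteratively $K-1$ times in order to convert the left-hand side into a $(K-1)$-dimensional Gowers-type cube average of a single factor $\phi_{P_*}$, where $P_* \in \Gk$ is chosen to attain $\min_P \|\phi_P\|_{U^{K-1}}$. By performing the translation $f \mapsto f - P_* g$ I may assume $P_* = 0$, so that the goal reduces to
$$\Bigl| \E_{f,g \in \FqN} \phi_0(f) \prod_{P \in \Gk \setminus \{0\}} \phi_P(f+Pg) \Bigr| \leq \|\phi_0\|_{U^{K-1}} + o(1).$$
Enumerate $\Gk \setminus \{0\} = \{Q_1, \ldots, Q_{K-1}\}$; the plan is to ``eliminate'' the factors $\phi_{Q_i}$ one by one.

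At the $i$-th step I apply Cauchy–Schwarz in a carefully chosen variable so that the factor at the form $f + Q_i g$ is duplicated and then dominated pointwise by $\nu(f+Q_i g)$, while every factor already featuring $\phi_0$ (and every $\nu$ introduced previously) gets doubled into a $\{0,1\}$-pair. Concretely, at the very first step I bound $|\phi_{Q_1}(f+Q_1 g)| \leq \nu(f+Q_1 g)$ and Cauchy–Schwarz in $g$ (with weight $\nu$), replacing $g$ by two independent copies $g^{(0)}, g^{(1)}$; iterating produces, at each stage, one new dummy variable and doubles the number of ``surviving'' $\phi_0$- and $\nu$-factors. After the $(K-1)$ rounds, raising to the power $2^{K-1}$, the left-hand side is bounded by an expression of the form
$$\E_{x, h_1, \ldots, h_{K-1}} \Bigl( \prod_{\omega \in \{0,1\}^{K-1}} \phi_0\bigl(x + \omega_1 h_1 + \cdots + \omega_{K-1} h_{K-1}\bigr) \Bigr) \cdot W(x, h_1, \ldots, h_{K-1}),$$
where $W$ is an explicit product of at most $(K-1) 2^{K-1}$ values of $\nu$ evaluated at certain linear forms in $x, h_1, \ldots, h_{K-1}$.

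Without the weight $W$, the inner average is exactly $\|\phi_0\|_{U^{K-1}}^{2^{K-1}}$, so the remaining step is to show that $W$ may be replaced by $1$ with an error of $o(1)$. For this I expand $W = \prod_j (1 + (\nu-1))$ evaluated at its various arguments; the term in which every factor contributes $1$ gives exactly the Gowers norm, while every other term contains at least one factor of $(\nu - 1)$ and can be handled by the Gowers–Cauchy–Schwarz inequality together with the bound $\|\nu - 1\|_{U^{K-1}} = o(1)$ from Lemma \ref{n}, using $|\phi_0| \leq \nu$ and the linear forms condition to control the remaining factors. Taking $2^{K-1}$-th roots then yields the stated inequality.

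The main obstacle is the bookkeeping at each Cauchy–Schwarz step. I need to verify that throughout the induction (i) the coefficients of the linear forms that appear always lie in $\{f/g : f, g \in \mathbf{G}_k\}$, (ii) the total number of distinct linear forms appearing in the final $\nu$-weighted expression does not exceed $K \cdot 2^{K-1}$ and the number of free variables does not exceed $3K-4$ (the two initial variables $f,g$ together with the $K-1$ dummy variables introduced by the Cauchy–Schwarz applications, plus the extra variables one picks up when carrying $\phi_0$-factors forward at intermediate stages), and (iii) no two of these forms become proportional. Conditions (ii) and (iii) are precisely what guarantee that the $(K 2^{K-1}, 3K-4, k)$-linear forms condition built into the definition of pseudorandomness is strong enough to absorb every error term. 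Once this combinatorial verification is in place, the argument is routine.
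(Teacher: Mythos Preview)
Your overall strategy---iterate Cauchy--Schwarz $K-1$ times to reach a weighted Gowers cube in $\phi_0$, then remove the weight---is exactly the paper's. But two of your steps do not work as written.

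First, the iteration. You propose at the first step to bound $|\phi_{Q_1}|\le\nu$ and ``Cauchy--Schwarz in $g$'', doubling $g$ to $g^{(0)},g^{(1)}$. But $\phi_0(f)$ does not depend on $g$, so this step does not double $\phi_0$; continuing this way you will never place $\phi_0$ on all $2^{K-1}$ cube vertices. The paper's fix is a change of variables: set $f=\sum_{i=1}^{K-1}y_i$ and $g=-\sum_i y_i/Q_i$, so that $f+Q_i g=\Psi_i(y)$ is independent of $y_i$ while $\Psi_0(y)=f$ depends on every $y_i$. Then at each step the factor to be eliminated is constant in one of the averaging variables and acts as the Cauchy--Schwarz weight, while $\phi_0$ genuinely doubles. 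This reparametrization is the missing ingredient in your iteration.

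Second, and more seriously, your weight-removal argument is incorrect. After the iteration one has $\E_{x,h}\bigl(\prod_\omega\phi_0(x+\omega\cdot h)\bigr)W(x,h)$, where $W$ is a product of values $\nu(\ell_j)$ at linear forms $\ell_j$ that are \emph{not} cube vertices $x+\omega\cdot h$. Your plan is to expand $W=\prod_j\bigl(1+(\nu-1)(\ell_j)\bigr)$ and kill each cross term via Gowers--Cauchy--Schwarz together with $\|\nu-1\|_{U^{K-1}}=o(1)$. But Gowers--Cauchy--Schwarz only applies when every factor sits at a vertex of a common cube; the forms $\ell_j$ do not, so no factor $\|\nu-1\|_{U^{K-1}}$ can be extracted this way. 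The paper instead bounds $\bigl|\E(W-1)\prod_\omega\phi_0\bigr|$ by one further Cauchy--Schwarz with weight $\prod_\omega\nu(x+\omega\cdot h)$ (using $\phi_0^2\le\nu^2$), reducing the problem to estimating $\E\bigl((W-1)^2\prod_\omega\nu\bigr)$ and $\E\prod_\omega\nu$. Expanding $(W-1)^2=W^2-2W+1$, each resulting term is a pure product of $\nu$ at pairwise non-proportional linear forms, and the linear forms condition yields $(1+o(1))-2(1+o(1))+(1+o(1))=o(1)$. It is precisely this computation that forces the parameters $(K\cdot 2^{K-1},\,3K-4,\,k)$ in the definition of pseudorandomness.
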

Before proving this proposition let us derive the following
\begin{corollary}\label{gvN2}
If the $\phi_{P}$ are bounded by $3+\nu$ then $\E \left( \prod_{P \in \Gk}\phi_{P}(f+Pg) \Big| f,g \in \FqN \right) \leq 4^{K} \min_{P} \| \phi_{P} \|_{U^{K-1}}+o_{N \rightarrow \infty}(1)$.
\end{corollary}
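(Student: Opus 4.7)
The plan is to reduce the corollary to the generalized von Neumann inequality (Proposition \ref{gvN}) by rescaling the functions $\phi_P$ so that they become bounded by a pseudorandom measure rather than by $3+\nu$.

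The key algebraic observation is that $3+\nu = 4\nu_{3/4}$, where $\nu_{3/4} = \tfrac{1}{4}\nu + \tfrac{3}{4}$ is the convex combination provided by Lemma \ref{nu+1}. By that lemma, $\nu_{3/4}$ is itself a pseudorandom measure. Hence, if we set $\tilde\phi_P := \phi_P/4$, the assumption $|\phi_P| \leq 3+\nu$ becomes $|\tilde\phi_P| \leq \nu_{3/4}$, putting us exactly in the setting of Proposition \ref{gvN} applied to the pseudorandom measure $\nu_{3/4}$.

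Applying Proposition \ref{gvN} to the family $(\tilde\phi_P)_{P\in\Gk}$ yields
\begin{equation*}
\E\left(\prod_{P\in\Gk}\tilde\phi_P(f+Pg)\Big|f,g\in\FqN\right) \leq \min_{P}\|\tilde\phi_P\|_{U^{K-1}} + o(1).
\end{equation*}
Since the product contains $K=|\Gk|$ factors, multiplying through by $4^K$ recovers $\E\bigl(\prod_P\phi_P(f+Pg)\bigr)$ on the left. On the right, homogeneity of the Gowers norm gives $\|\tilde\phi_P\|_{U^{K-1}} = \tfrac{1}{4}\|\phi_P\|_{U^{K-1}}$, so we pick up an overall factor of $4^{K-1}$, which is majorized by the $4^K$ in the statement; the error term $4^K\cdot o(1)$ is still $o(1)$ since $K$ is fixed.

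There is essentially no obstacle here: the only thing to check is that Lemma \ref{nu+1} indeed delivers pseudorandomness of $\nu_{3/4}$ (which is exactly its content), and that Proposition \ref{gvN} is stated uniformly enough to apply to a family of functions majorized in absolute value by an arbitrary pseudorandom measure (not necessarily $\nu$ itself). Both are in hand, so the corollary follows by a one-line rescaling argument.
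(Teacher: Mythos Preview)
Your proof is correct and follows exactly the same approach as the paper: divide each $\phi_P$ by $4$, apply Proposition~\ref{gvN} to the pseudorandom measure $(\nu+3)/4$ (which the paper denotes $\nu_{1/4}$, though by the convention $\nu_\alpha=(1-\alpha)\nu+\alpha$ your label $\nu_{3/4}$ is the consistent one), and rescale. Your observation that the argument actually yields the sharper constant $4^{K-1}$ is a nice bonus.
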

\begin{proof}Just divide each $\phi_{P}$ by 4, and apply Proposition \ref{gvN} for functions bounded by the measure $\nu_{1/4}=(\nu+3)/4$, which is pseudorandom according to Lemma \ref{nu+1}.
\end{proof}

\begin{proof}[Proof of Proposition \ref{gvN}]
The proof is exactly the same as in Proposition 5.3 in \cite{gt-primes}, so we just reproduce it briefly here. Since $\Gk$ is a subgroup of $(\FqN,+)$, by a change of variable if need be, it suffices to show that $\E \left( \prod_{P \in \Gk}\phi_{P}(f+Pg))\Big| f,g \in \GN \right) \leq \| \phi_{0} \|_{U^{K-1}}+o_{N \rightarrow \infty}(1)$.

We claim something slightly more general, namely that for any $m$ distinct non-zero polynomials $P_1, P_2, \ldots, P_m \in \Gk$, we have
$$\E \left( \phi_0(f)\phi_1(f+P_1g)\cdots \phi_{m}(f+P_{m}g) \Big| f,g \in \FqN \right) \leq \| \phi_{0} \|_{U^{m}}+o_{N \rightarrow \infty}(1)$$
Proposition \ref{gvN} is a special case of this when $m=K-1$.
For $\mathbf{y}=(y_1,\ldots, y_{m}) \in (\FqN)^{m}$ we define the linear forms
$$ \Psi_0(\mathbf{y})=y_1+y_2+\cdots+y_{m}$$
$$ \Psi_{i}(\mathbf{y})=\sum_{j=1, j \neq i}^{m} \left(1-\frac{P_{i}}{P_{j}} \right) y_{j}$$

Thus for every $i=1,\ldots,m$, $\Psi_{i}(y_1,\ldots, y_{m})$ does not depend on $y_{i}$, which will be crucial in our later use of Cauchy-Schwarz.
If $f=\sum_{i=1}^{m}y_{i}, g=-\sum_{i=1}^{m}\frac{y_{i}}{P_{i}}$, then $f+P_{i}g=\Psi_{i}(\mathbf{y})$.
It is easy to see that the map $\mathbf{y} \mapsto (f,g)$ is $q^{N(m-2)}$-to-one, so that
$$ \E \left( \phi_0(f)\phi_1(f+P_1g) \cdots \phi_{m}(f+P_{m}g) \Big| f,g \in \FqN \right)= \E \left( \prod_{i=0}^{m} \phi_{i}(\Psi_{i}(\mathbf{y}))\Big|\mathbf{y} \in (\FqN)^{m} \right) $$
For simplicity let us treat the case $m=2$. The argument then extends straighforwardly to the general case. Let's call the left hand side $J_0$.
Since $\phi_2$ is bounded by $\nu$, we have:
$$J_0  \leq  \E \left( \nu(\Psi_2(y_1)) \left| \E \left( \phi_1(\Psi_1(y_2)) \phi_0(\Psi_0(y_1,y_2)) \Big|  y_2 \in \FqN \right) \right| \Big| y_1 \in \FqN \right) $$

By Cauchy-Schwarz,
\begin{eqnarray}
J_0^2 & \leq & \E \left(  \nu(\Psi_2(y_1)) \Big| y_1 \in \FqN \right) \E \left( \nu(\Psi_2(y_1)) \E \left( \phi_1(\Psi_1(y_1,y_2)) \phi_0(\Psi_0(y_1,y_2)) \Big| y_2 \in \FqN \right)^2   \Big| y_1 \in \FqN \right) \nonumber \\
&\leq& (1+o(1)) J_1 \nonumber
\end{eqnarray}
where we have used the linear forms condition for $\Psi_2(y_1)$, and
\begin{eqnarray}
J_1 &=& \E \left( \E \left( \phi_1(\Psi_1(y_1,y_2)) \phi_0(\Psi_0(y_1,y_2)) \Big| y_2 \in \FqN \right)^2 \nu(\Psi_2(y_1)) \Big| y_1 \in \FqN \right) \nonumber \\
&=& \E \left( \phi_1(\Psi_1(y_2)) \phi_0(\Psi_0(y_1,y_2))\phi_1(\Psi_1(y_2')) \phi_0(\Psi_0(y_1,y_2')) \nu(\Psi_2(y_1)) \Big| y_1,y_2,y_2' \in \FqN \right)\nonumber \\
&=& \E \left( \phi_1(\Psi_1(y_2)) \phi_1(\Psi_1(y_2')) \left|  \E \left( \phi_0(\Psi_0(y_1,y_2))\phi_0(\Psi_0(y_1,y_2')) \nu(\Psi_2(y_1)) \Big| y_1 \in \FqN  \right) \right| \Big| y_2,y_2' \in \FqN \right) \nonumber \\
&=& \E \left( \nu(\Psi_1(y_2)) \nu(\Psi_1(y_2')) \left|  \E \left( \phi_0(\Psi_0(y_1,y_2))\phi_0(\Psi_0(y_1,y_2')) \nu(\Psi_2(y_1)) \Big| y_1 \in \FqN  \right) \right| \Big| y_2,y_2' \in \FqN \right) \nonumber
\end{eqnarray}
Note that we have eliminated $\phi_2$ in $J_1$. Next, again by Cauchy-Schwarz,
\begin{eqnarray}
J_1^2 &=& \E \left( \nu(\Psi_1(y_2))\nu(\Psi_1(y_2')) \Big| y_2,y_2' \in \FqN \right) \times \nonumber \\
& &\times \E \left( \E \left( \phi_0(\Psi_0(y_1,y_2))\phi_0(\Psi_0(y_1,y_2')) \nu(\Psi_2(y_1)) \Big| y_1 \in \FqN \right)^2 \nu(\Psi_1(y_2))\nu(\Psi_1(y_2')) \Big| y_2, y_2' \in \FqN \right) \nonumber \\
& \leq & (1+o(1) J_2 \nonumber
\end{eqnarray}
where we have used the linear forms condition for the forms $\Psi_1(y_2)$ and $\Psi_1(y_2')$, and
\begin{eqnarray}
J_2 &=& \E \left( \E \left( \phi_0(\Psi_0(y_1,y_2))\phi_0(\Psi_0(y_1,y_2')) \nu(\Psi_2(y_1)) \Big| y_1 \in \FqN \right)^2 \nu(\Psi_1(y_2))\nu(\Psi_1(y_2')) \Big| y_2, y_2' \in \FqN \right) \nonumber \\
&=& \E \Big( \phi_0(\Psi_0(y_1,y_2))\phi_0(\Psi_0(y_1,y_2')\phi_0(\Psi_0(y_1',y_2))\phi_0(\Psi_0(y_1',y_2')) \times \nonumber \\
& & \times \nu(\Psi_2(y_1))\nu(\Psi_2(y_1'))\nu(\Psi_1(y_2))\nu(\Psi_1(y_2')) \Big| y_1,y_1',y_2,y_2' \in \FqN \Big) \nonumber
\end{eqnarray}
Note that we have eliminated $\phi_1$ in $J_2$. Recall that $\Psi_0(y_1,y_2)=y_1+y_2$. Let us re-parameterize the cube $\{\Psi_0(y_1,y_2),\Psi_0(y_1,y_2'),\Psi_0(y_1',y_2), \Psi_0(y_1',y_2') \}=\{f,f+h_1,f+h_2,f+h_1+h_2\}$. Then
\begin{eqnarray}
J_2 &=& \E \Big( \phi_0(f)\phi_0(f+h_1)\phi_0(f+h_2)\phi_0(f+h_1+h_2) \times \nonumber \\
& & \times \nu(\Psi_1(f)) \nu(\Psi_1(f+h_1)) \nu(\Psi_2(f+h_2))\nu(\Psi_2(f+h_1+h_2)) \Big| f,h_1,h_2 \in \FqN \Big) \nonumber
\end{eqnarray}
If it was not for the factor $W(f,h_1,h_2)=\nu(\Psi_1(f)) \nu(\Psi_1(f+h_1)) \nu(\Psi_2(f+h_2))\nu(\Psi_2(f+h_1+h_2))$, then $J_2$ would be equal to $\| \phi_0 \|_{U^2}^4$. We have to show that $J_2=\| \phi_0 \|_{U^2}^4+o(1)$.

Indeed,
\begin{eqnarray}
J_2-\| \phi_0 \|_{U^2}^4 &=& \E \left( (W(f,h_1,h_2)-1)\phi_0(f)\phi_0(f+h_1)\phi_0(f+h_2)\phi_0(f+h_1+h_2) \Big| f,h_1,h_2 \in \FqN \right) \nonumber \\
& \leq & \E \left( (W(f,h_1,h_2)-1)\nu(f)\nu(f+h_1)\nu(f+h_2)\nu(f+h_1+h_2) \Big| f,h_1,h_2 \in \FqN \right) \nonumber
\end{eqnarray}
By Cauchy-Schwarz,
\begin{eqnarray}
\left| J_2-\| \phi_0 \|_{U^2}^4 \right|^2 &\leq& \E \left(\nu(f)\nu(f+h_1)\nu(f+h_2)\nu(f+h_1+h_2) \Big| f,h_1,h_2 \in \FqN \right) \times \nonumber \\
& & \times \E \left( (W(f,h_1,h_2)-1)^2 \nu(f)\nu(f+h_1)\nu(f+h_2)\nu(f+h_1+h_2) \Big| f,h_1,h_2 \in \FqN \right) \nonumber
\end{eqnarray}
Thus it suffices to show the following two claims:
\begin{claim}
$\E \left(\nu(f)\nu(f+h_1)\nu(f+h_2)\nu(f+h_1+h_2) \Big| f,h_1,h_2 \in \FqN \right)=1+o(1)$
\end{claim}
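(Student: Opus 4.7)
The claim is a direct instance of the linear forms condition, so the plan is simply to unpack the definitions and verify the hypotheses. Write $\mathbf{f}=(f,h_1,h_2)\in(\FqN)^3$ and introduce the four linear forms
\[
\psi_1(\mathbf{f})=f,\quad \psi_2(\mathbf{f})=f+h_1,\quad \psi_3(\mathbf{f})=f+h_2,\quad \psi_4(\mathbf{f})=f+h_1+h_2.
\]
Then the expectation in the claim is exactly $\E(\nu(\psi_1(\mathbf{f}))\cdots \nu(\psi_4(\mathbf{f}))\mid \mathbf{f}\in(\FqN)^3)$, so to get $1+o(1)$ it suffices to check that $(\psi_i)_{i=1}^4$ falls within the scope of the $(K2^{K-1},3K-4,k)$-linear forms condition that $\nu$ is assumed to satisfy.

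First, the number of forms is $4$ and the number of variables is $3$; since $K=q^k\geq 2$ we have $K2^{K-1}\geq 4$ and $3K-4\geq 3$ (both inequalities hold trivially for $K\geq 2$), so the counts of forms and variables are admissible. Second, every coefficient $L_{ij}$ is $0$ or $1$, and these certainly lie in $\{f/g\,:\,f,g\in\mathbf{G}_{k}\}$ (take $g=1$). Third, the coefficient vectors are
\[
(1,0,0),\quad (1,1,0),\quad (1,0,1),\quad (1,1,1),
\]
which are pairwise non-proportional: any two distinct vectors on this list differ in at least one coordinate where one is $0$ and the other is $1$, ruling out a scalar relation. Finally, each form has constant term $b_i=0$, which is certainly allowed (and the $o(1)$ in the linear forms condition is uniform in the constant terms anyway).

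Therefore all the hypotheses of the $(K2^{K-1},3K-4,k)$-linear forms condition are met, and invoking it yields the desired estimate $1+o(1)$. There is no real obstacle here; the only thing to be a bit careful about is confirming the pairwise non-proportionality of the coefficient vectors, which is the content of the $U^2$-parallelepiped having four genuinely distinct vertex directions.
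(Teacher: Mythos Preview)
Your proof is correct and is exactly the paper's approach: the paper's one-line justification is simply that the claim ``follows from the linear forms condition for 4 forms,'' and you have spelled out the verification of the hypotheses. One minor slip: your parenthetical assertion that $3K-4\geq 3$ for all $K\geq 2$ fails at $K=2$, but this is harmless here since the claim is stated within the $m=2$ illustration (so effectively $K\geq 3$, giving $3K-4\geq 5$).
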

\begin{claim}
$\E \left( (W(f,h_1,h_2)-1)^2 \nu(f)\nu(f+h_1)\nu(f+h_2)\nu(f+h_1+h_2) \Big| f,h_1,h_2 \in \FqN \right)=o(1)$
\end{claim}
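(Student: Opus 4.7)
The strategy is to expand $(W-1)^2 = W^2 - 2W + 1$ and estimate the three resulting expectations separately using the linear forms condition on $\nu$. Writing $\nu^{\otimes 4}$ as shorthand for $\nu(f)\nu(f+h_1)\nu(f+h_2)\nu(f+h_1+h_2)$, we have
\[
\E\left((W-1)^2 \nu^{\otimes 4} \,\Big|\, f,h_1,h_2\in\FqN\right) = A - 2B + C,
\]
where $A := \E(W^2\,\nu^{\otimes 4})$, $B := \E(W\,\nu^{\otimes 4})$, and $C := \E(\nu^{\otimes 4})$. Claim 1 already gives $C = 1 + o(1)$, so it remains to show $A = 1 + o(1)$ and $B = 1 + o(1)$; the signed combination then yields the desired $o(1)$.

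For $B$: the product $W \cdot \nu^{\otimes 4}$ is an expectation of eight copies of $\nu$ evaluated at linear forms in the variables $f, h_1, h_2$ (together with whatever auxiliary variable is implicit in the definition of $W$). The four forms coming from $\nu^{\otimes 4}$ all have $f$-coefficient $1$, while the four from $W$ carry an $f$-coefficient of $c_1 = 1-P_1/P_2$ or $c_2 = 1-P_2/P_1$, both non-zero (since $P_1, P_2$ are distinct non-zero elements of $\Gk$) and neither equal to $1$. A direct case-check shows that the resulting coefficient vectors are pairwise non-proportional, and the coefficients clearly lie in $\{f/g : f,g \in \Gk\}$. The linear forms condition then yields $B = 1 + o(1)$.

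For $A$: the naive product $W^2 \cdot \nu^{\otimes 4}$ would repeat each of the four forms inside $W$ twice, which violates the non-proportionality hypothesis. The standard remedy, used throughout Green--Tao, is to duplicate the hidden auxiliary variable: interpreting $W$ as a conditional average over an auxiliary variable $y_1$, we write $W^2 = \E_{y_1,y_1'}[\cdots]$ over two independent copies, producing eight distinct $\nu$-factors rather than four with multiplicity two. Combined with $\nu^{\otimes 4}$, this yields a single expectation of twelve $\nu$-values in at most five independent variables. Forms from the two copies of $W$ are automatically non-proportional because they involve disjoint coordinates $y_1, y_1'$, while the within-copy analysis reduces to the check already done for $B$. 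The linear forms condition, comfortably within the allowed parameters $(K\cdot 2^{K-1}, 3K-4, k)$, then gives $A = 1 + o(1)$.

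Combining the three estimates yields $A - 2B + C = o(1)$, as desired. The main obstacle is the bookkeeping needed to verify pairwise non-proportionality among the twelve linear forms entering the estimate of $A$; it is routine but must be carried out carefully, and it is precisely this step that explains in miniature why a pseudorandom measure is required to satisfy a linear forms condition with parameters of order $K \cdot 2^{K-1}$.
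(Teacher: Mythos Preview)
Your approach is the same as the paper's: expand $(W-1)^2 = W^2 - 2W + 1$ and apply the linear forms condition to each of the three resulting expectations (the paper cites 12, 8, and 4 forms respectively, giving $(1+o(1))-2(1+o(1))+(1+o(1))=o(1)$). You actually go further than the paper by explaining \emph{why} the $W^2$ term does not run into repeated forms: the reparametrisation from $(y_1,y_1',y_2,y_2')$ to $(f,h_1,h_2)$ drops one dimension, so $W(f,h_1,h_2)$ is genuinely an average over the leftover coordinate, and $W^2$ unfolds as an average over two independent copies. This yields $12$ forms in $5$ variables, exactly matching the paper's closing remark that one needs $K\cdot 2^{K-1}$ forms in $3K-4$ variables.

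One small gap in your non-proportionality check for $B$: comparing $f$-coefficients alone is not enough. If one reads the paper's displayed formula for $W$ literally (with $\Psi_1(f)=c_1 f$ etc.), then the form $c_1 f$ is proportional to the form $f$ in $\nu^{\otimes 4}$, so ``$f$-coefficient $c_i\neq 1$'' does not separate them. The correct invariant is the auxiliary variable you already invoked: every form inside $W$ carries a nonzero coefficient on that hidden coordinate, while every form in $\nu^{\otimes 4}$ has coefficient zero there, and that is what forces pairwise non-proportionality between the two blocks. (The paper's explicit formula $W(f,h_1,h_2)=\nu(\Psi_1(f))\cdots$ is a notational shortcut that suppresses this variable; taken at face value it would indeed produce proportional forms.) With this correction your argument goes through and coincides with the paper's.
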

The first claim follows from the linear forms condition for 4 forms. As for the second claim, we expand the left hand side as
\begin{eqnarray}
& & \E \left( (W(f,h_1,h_2)^2 \nu(f)\nu(f+h_1)\nu(f+h_2)\nu(f+h_1+h_2) \Big| f,h_1,h_2 \in \FqN \right) \nonumber \\
&-& 2\E \left( (W(f,h_1,h_2) \nu(f)\nu(f+h_1)\nu(f+h_2)\nu(f+h_1+h_2) \Big| f,h_1,h_2 \in \FqN \right) \nonumber \\
&+& \E \left( \nu(f)\nu(f+h_1)\nu(f+h_2)\nu(f+h_1+h_2) \Big| f,h_1,h_2 \in \FqN \right) \nonumber
\end{eqnarray}
Using the linear forms condition for 12,8 and 4 forms respectively, we see that this is $(1+o(1))-2(1+o(1))+(1+o(1))=o(1)$, as required.
\end{proof}
\begin{remark}
In the general case, we will need the linear forms condition for $K2^{K-1}$ linear forms in $3K-4$ variables.
\end{remark}
\begin{remark}Our use of this generalized von Neumann inequality follows Green-Tao and thus is genuinely different from Gowers'. In his proof of Szemer\'edi's theorem, Gowers used the following fact, which is now known as a weak form of the the Gowers Inverse Conjecture \cite{gt-inverse-u3}: If the Gowers norm of a function is large, then it must correlate locally with a polynomial phase. Green and Tao used this inequality for their transference principle, namely to transfer Szemer\'edi's theorem from the uniform measure to pseudorandom measures.
\end{remark}

\section{Gowers anti-uniformity} \label{S6}

\begin{definition}
For a real function $\phi$ on $\FqN$, define its $U^{d}$ dual function $\mathcal{D}_{d}\phi$ by
$$\mathcal{D}_{d}\phi= \E \left( \prod_{\mathbf{\omega} \in \{0,1\}^{d}, \omega \neq 0} \phi(x+\omega_1 h_1+ \ldots +\omega_{d} h_{d})\Big|x,h_1,\ldots,h_{d} \in \FqN \right)  $$
\end{definition}

From the definitions of the Gowers $U^{d}$ norm and $U^{d}$ dual functions it follows immediately that
\begin{lemma} \label{property1}
$\langle \phi, \mathcal{D}_{d}\phi \rangle = \ \| \phi \|_{U^{d}(G)}^{2^{d}}$
\end{lemma}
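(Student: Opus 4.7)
The plan is to prove the identity by simply unfolding both sides and observing that they coincide term-by-term. There is no real obstacle here; the content of the lemma is essentially a reindexing remark, and the only subtlety is to notice how the extra factor of $\phi(x)$ from the inner product fills in the ``missing'' $\omega = 0$ vertex of the $d$-cube.

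First I would interpret the dual function as a genuine function of $x$: writing
\[
\mathcal{D}_{d}\phi(x) = \E\left(\prod_{\mathbf{\omega}\in\{0,1\}^{d},\,\omega\neq 0}\phi(x+\omega_{1}h_{1}+\cdots+\omega_{d}h_{d})\,\Big|\,h_{1},\ldots,h_{d}\in\FqN\right),
\]
so that the average is over $h_{1},\ldots,h_{d}$ only. Then by the definition of the inner product on $\FqN$ and Fubini,
\[
\langle \phi,\mathcal{D}_{d}\phi\rangle
= \E_{x\in\FqN}\phi(x)\,\mathcal{D}_{d}\phi(x)
= \E_{x,h_{1},\ldots,h_{d}\in\FqN}\,\phi(x)\prod_{\mathbf{\omega}\neq 0}\phi(x+\omega_{1}h_{1}+\cdots+\omega_{d}h_{d}).
\]
The lone factor $\phi(x)$ is precisely the vertex $\omega=0$ of the cube $\{0,1\}^{d}$, so it can be absorbed into the product, giving
\[
\langle \phi,\mathcal{D}_{d}\phi\rangle
= \E_{x,h_{1},\ldots,h_{d}\in\FqN}\prod_{\mathbf{\omega}\in\{0,1\}^{d}}\phi(x+\omega_{1}h_{1}+\cdots+\omega_{d}h_{d}).
\]

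Finally I would compare with the definition of the Gowers norm. Since $\phi$ is real-valued, every factor $C^{|\mathbf{\omega}|}\phi$ appearing in the definition of $\|\phi\|_{U^{d}(G)}$ simplifies to $\phi$, so
\[
\|\phi\|_{U^{d}(\FqN)}^{2^{d}}
= \E_{x,h_{1},\ldots,h_{d}\in\FqN}\prod_{\mathbf{\omega}\in\{0,1\}^{d}}\phi(x+\omega_{1}h_{1}+\cdots+\omega_{d}h_{d}),
\]
which matches the previous display. This proves $\langle\phi,\mathcal{D}_{d}\phi\rangle = \|\phi\|_{U^{d}(\FqN)}^{2^{d}}$.
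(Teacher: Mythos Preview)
Your argument is correct and is exactly the unfolding of definitions that the paper has in mind; the paper itself gives no proof beyond the remark that the identity ``follows immediately'' from the definitions of the Gowers norm and the dual function. Your only added care---interpreting $\mathcal{D}_{d}\phi$ as a function of $x$ with the average over $h_{1},\dots,h_{d}$ only, and noting that real-valuedness kills the conjugations---is appropriate and matches the intended reading.
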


From a functional analytic point of view $\mathcal{D}_{d}\phi$ may be regarded as a ``support functional'' of $\phi$, with the difference that $\Dphi$ is not linear. From now on we will be working with the $U^{K-1}$ dual functions
We will be particularly interested in the dual functions of functions bounded by a pseudorandom measure $\nu$.

\begin{lemma} \label{property2}
If $0 \leq \phi \leq \nu$, then $\langle \phi, \mathcal{D}_{K-1}\phi \rangle = 1+o(1)$.
\end{lemma}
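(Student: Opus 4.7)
The plan is to combine Lemma \ref{property1}, the pointwise bound $\phi \le \nu$, and the pseudorandomness of $\nu$ to evaluate the inner product.

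First I would invoke Lemma \ref{property1} to rewrite
$$\langle \phi, \mathcal{D}_{K-1}\phi \rangle = \| \phi \|_{U^{K-1}}^{2^{K-1}} = \E\Big(\prod_{\omega \in \{0,1\}^{K-1}} \phi(x + \omega_1 h_1 + \cdots + \omega_{K-1} h_{K-1}) \,\Big|\, x, h_1, \ldots, h_{K-1} \in \FqN \Big),$$
which recasts the inner product as an average of a product of $2^{K-1}$ nonnegative values of $\phi$. Since $\phi \le \nu$ pointwise, each occurrence of $\phi$ in the product may be majorised by $\nu$, giving
$$\langle \phi, \mathcal{D}_{K-1}\phi \rangle \le \|\nu\|_{U^{K-1}}^{2^{K-1}}.$$
The $2^{K-1}$ affine forms $x + \omega_1 h_1 + \cdots + \omega_{K-1} h_{K-1}$ indexed by $\omega \in \{0,1\}^{K-1}$ are pairwise non-proportional, so the $(K2^{K-1}, 3K-4, k)$-linear forms condition supplies $\E(\prod_\omega \nu(x+\omega \cdot h)) = 1+o(1)$; equivalently Lemma \ref{n} already yields $\|\nu\|_{U^{K-1}} = 1+o(1)$, whence $\|\nu\|_{U^{K-1}}^{2^{K-1}} = 1 + o(1)$.

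For the matching lower bound, which upgrades this one-sided inequality to the stated equality, my plan is to decompose $\phi = \nu - g$ where $g := \nu - \phi \ge 0$, and expand the product multinomially as
$$\prod_\omega \phi(\psi_\omega) = \prod_\omega \nu(\psi_\omega) + \sum_{\emptyset \neq S \subset \{0,1\}^{K-1}} (-1)^{|S|} \prod_{\omega \in S} g(\psi_\omega) \prod_{\omega \notin S} \nu(\psi_\omega).$$
Taking expectations, the leading term contributes $\|\nu\|_{U^{K-1}}^{2^{K-1}} = 1+o(1)$, so the lemma reduces to showing that the sum of the $S \neq \emptyset$ contributions is $o(1)$. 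The main obstacle is controlling these mixed $(g,\nu)$-correlations: with only $g \le \nu$ one cannot gain any pointwise smallness, so the $o(1)$ must be extracted from the multinomial sign structure via iterated Cauchy--Schwarz in the style of Proposition \ref{gvN}, using the fact that $g$ itself satisfies the same pointwise bound as $\nu$. Each mixed correlation is thereby reduced to an average of $\nu$ along a family of pairwise non-proportional forms of total length at most $K 2^{K-1}$ in $3K-4$ variables, and one invokes the linear forms condition in this full generality — which is precisely why pseudorandomness is defined with these parameters rather than the bare minimum needed for the upper bound. Assembling these estimates gives $\langle \phi, \mathcal{D}_{K-1}\phi \rangle = \|\nu\|_{U^{K-1}}^{2^{K-1}} + o(1) = 1 + o(1)$, as required.
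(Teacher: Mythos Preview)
Your upper bound argument is correct and is exactly the paper's proof: by Lemma \ref{property1}, $\langle \phi, \mathcal{D}_{K-1}\phi \rangle = \|\phi\|_{U^{K-1}}^{2^{K-1}}$, and since $0 \le \phi \le \nu$ pointwise and every factor in the Gowers norm expansion is nonnegative, this is at most $\|\nu\|_{U^{K-1}}^{2^{K-1}} = 1+o(1)$ by Lemma \ref{n}. The paper's one-line proof (``This follows from Lemmas \ref{n} and \ref{property1}'') is precisely this.

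The second half of your proposal, however, attempts to prove something false. The equality sign in the statement is a typo for $\le$: take $\phi \equiv 0$, which certainly satisfies $0 \le \phi \le \nu$, and observe that $\langle \phi, \mathcal{D}_{K-1}\phi\rangle = 0$, not $1+o(1)$. More generally any $\phi$ with $\E\phi$ bounded away from $1$ already has $\|\phi\|_{U^{K-1}}^{2^{K-1}} \le \|\phi\|_1 \cdot \|\nu\|_\infty^{\text{stuff}}$-type losses that prevent the value from being close to $1$. Consequently your multinomial/Cauchy--Schwarz plan for the mixed $(g,\nu)$-terms cannot succeed: with $\phi = 0$ one has $g = \nu$, and each ``mixed'' correlation equals $\|\nu\|_{U^{K-1}}^{2^{K-1}} = 1+o(1)$, so there is no $o(1)$ cancellation to be extracted from the sign structure. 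The iterated Cauchy--Schwarz of Proposition \ref{gvN} bounds such a term by $\min \|\cdot\|_{U^{K-1}} + o(1)$, but $\|g\|_{U^{K-1}}$ is not small in general.

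If you check how the lemma is actually used (property (1) of Definition \ref{QAP}), only the inequality $\langle f, \mathcal{D}f\rangle \le 1$ is required, confirming that the intended statement is the one-sided bound you already proved.
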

\begin{proof}This follows from Lemmas \ref{n} and \ref{property1}.
\end{proof}

\begin{lemma} \label{property3}
For every $m$ there is a constant $C(m)$ such that if $0 \leq \phi_1, \ldots, \phi_{m} \leq \nu$, then
$\|\mathcal{D}_{K-1}\phi_1 \cdots \mathcal{D}_{K-1} \phi_{m} \|_{U^{K-1}}^{*} \leq C(m)$, where $\| \cdot \|_{U^{K-1}}^{*}$ is the dual norm of $\| \cdot \|_{U^{K-1}}$ (defined in the usual way $\| f \|_{U^{K-1}}^{*} = \sup\{ |\langle f,g \rangle| : \| g \|_{U^{K-1}} \leq 1 \}$).
\end{lemma}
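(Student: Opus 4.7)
By the definition $\|F\|_{U^{K-1}}^{*}=\sup_{\|g\|_{U^{K-1}}\leq 1}|\langle g,F\rangle|$, it suffices to bound $|\langle g,\prod_{j}\mathcal{D}_{K-1}\phi_{j}\rangle|$ by a constant $C(m)$ whenever $\|g\|_{U^{K-1}}\leq 1$. Unfolding the dual functions yields
$$\langle g,\prod_{j=1}^{m}\mathcal{D}_{K-1}\phi_{j}\rangle=\E_{x,\vec{h}^{(1)},\ldots,\vec{h}^{(m)}}\,g(x)\prod_{j=1}^{m}\prod_{\omega\in\{0,1\}^{K-1}\setminus\{0\}}\phi_{j}(x+\omega\cdot\vec{h}^{(j)}),$$
an expectation over $x$ and $m$ independent auxiliary $(K-1)$-tuples.

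The plan, following the argument in \cite{gt-primes}, is to apply Cauchy-Schwarz iteratively, once for each of the $m$ dual functions. In the $j$-th step one isolates the auxiliary variable $\vec{h}^{(j)}$, applies Gowers-Cauchy-Schwarz in dimension $K-1$ to peel off the $\phi_{j}$ factors (their contribution is at most $\|\phi_{j}\|_{U^{K-1}}^{2^{K-1}-1}\leq\|\nu\|_{U^{K-1}}^{2^{K-1}-1}=(1+o(1))^{2^{K-1}-1}$ by Lemma~\ref{n}), and uses Jensen's inequality to push the remaining averages inside a $U^{K-1}$ norm. The pointwise bound $\phi_{i}\leq\nu$ is invoked to replace the surviving $\phi_{i}$ ($i\neq j$) by $\nu$, duplicated around the Gowers cube at each iteration. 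After all $m$ rounds one is left with $\|g\|_{U^{K-1}}$ raised to a (fixed, $m$-dependent) power, which is at most $1$ by hypothesis, multiplied by an expectation of a product of $O_{m}(1)$ values of $\nu$ at various linear combinations of the auxiliary variables.

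This remaining $\nu$-correlation is then controlled by the pseudorandomness of $\nu$: the $(K2^{K-1},3K-4,k)$-linear forms condition handles configurations in which all the relevant linear forms are pairwise non-proportional, yielding $1+o(1)$, while the $2^{K-1}$-correlation condition handles degenerate configurations where two or more shifts coincide, contributing a sum of values of $\tau$ whose uniform $L^{p}$-boundedness allows one to absorb everything into the final constant $C(m)$.

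The principal obstacle is the combinatorial bookkeeping after the iterated Cauchy-Schwarz steps: one must classify the many linear forms that arise from the doubling process into generic and degenerate cases, verify that in each case the correct pseudorandomness condition applies, and check that the contributions sum to an $N$-independent bound. Both the linear forms and correlation conditions are essential, parallel to their role in the proof of Proposition~\ref{gvN}; indeed, the parameters $K2^{K-1}$ and $3K-4$ in the definition of a pseudorandom measure are tailored precisely so that this lemma (together with the generalized von Neumann inequality) goes through.
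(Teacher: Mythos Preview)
Your approach has a genuine gap, and it stems from a misunderstanding of why this lemma is (in the paper's words) ``perhaps surprising''. The parameter $m$ is \emph{not} bounded in terms of $K$; it will later be taken arbitrarily large. Your sketch ends with ``an expectation of a product of $O_{m}(1)$ values of $\nu$'' and then invokes the $(K2^{K-1},3K-4,k)$-linear forms condition for the generic configurations. But that condition only controls at most $K2^{K-1}$ linear forms, a number that is fixed once $k$ is fixed. As soon as $m$ is large, you have more $\nu$-factors than the linear forms condition can handle, and your argument collapses. (Similarly the correlation condition caps the number of forms at $2^{K-1}$, so splitting into generic and degenerate cases does not save the day.) Relatedly, the iterated Cauchy--Schwarz you describe, one round per dual function, does not reduce cleanly to a $\|g\|_{U^{K-1}}$-power times a $\nu$-correlation: after one Gowers--Cauchy--Schwarz step the remaining object is a Gowers norm of a \emph{product} $g\cdot\prod_{i\neq j}(\cdots)$, which is not in a shape where you can peel off the next $\phi_{i}$.

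The paper's proof is organised quite differently. One introduces a single set of auxiliary variables $g_{1},\ldots,g_{K-1}$ (one per cube direction, not one per dual function), shifts all the $\vec h^{(j)}$ simultaneously, and applies Gowers--Cauchy--Schwarz \emph{once} to separate the test function $\psi$ from everything else. After H\"older, one is left with $\E_{h^{(1)},\ldots,h^{(m)}}\|\prod_{j}\phi_{j}(\cdot+h^{(j)})\|_{U^{K-1}}^{2^{K-1}}$, which expands and, after bounding each $\phi_{j}$ by $\nu$ and changing variables, becomes $\E_{g}\bigl(\E_{f}\prod_{\omega}\nu(f+\omega\cdot g)\bigr)^{m}$. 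The inner average now has exactly $2^{K-1}$ factors of $\nu$, independently of $m$, so the correlation condition applies (the linear forms condition is not used at all here). The entire $m$-dependence is absorbed by taking the $m$-th power of the $\tau$-sum and invoking the uniform bound $\E(\tau^{p})=O_{p}(1)$ with $p=m$. This is the mechanism you are missing: the unboundedness of $m$ is matched by the unboundedness of the exponent $p$ in the correlation condition, not by the number of forms.
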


This is by far the most important property of the dual functions, and perhaps surprising, since $m$ is not bounded, while the number of forms in the linear forms condition and correlation condition is bounded. However, this comes from the fact that the exponent $p$ of the function $\tau$ in the correlation condition is not bounded. In Reingold-Trevisan-Tulsiani-Vadhan's language this means that $\nu$ is indistinguishable to the uniform measure according to the family $\{ \mathcal{D}_{K-1}\phi_1 \cdots \mathcal{D}_{K-1} \phi_{m}: 0 \leq \phi_{i}\leq \nu \}$.
\begin{proof} The proof is exactly the same as in Lemma 6.3 in \cite{gt-primes}, so we will reproduce it here for the case $K=3,m=2$.
It suffices to show that for any function $\psi$ with $\| \psi \|_{U^{2}}\leq 1$, we have $\langle \psi, \Dphi_1 \Dphi_2 \rangle = O(1)$. We write out this as
\begin{eqnarray}
 & & \E \Big( \psi(f) \E \left( \phi_1(f+h_1)\phi_1(f+h_2)\phi_1(f+h_1+h_2) \Big| h_1, h_2 \in \FqN \right) \times \nonumber \\
 & & \times \E \left( \phi_2(f+k_1)\phi_2(f+k_2)\phi_2(f+k_1+k_2) \Big| k_1, k_2 \in \FqN \right) \Big| f \in \FqN \Big) \nonumber \\
 &=& \E \Big( \psi(f) \E \left( \phi_1(f+h_1+g_1)\phi_1(f+h_2+g_2)\phi_1(f+h_1+g_1+h_2+g_2) \Big| h_1, h_2 \in \FqN \right) \times \nonumber \\
 & & \times \E \left( \phi_2(f+k_1+g_1)\phi_2(f+k_2+g_2)\phi_2(f+k_1+g_1+k_2+g_2) \Big| k_1, k_2 \in \FqN \right) \Big| f,g_1,g_2 \in \FqN \Big) \nonumber
\end{eqnarray}
We rewrite this as
\begin{eqnarray}
& & \E \Big( \E \Big( \psi(f) \phi_1(f+g_1+h_1) \phi_2(f+g_1+k_1) \phi_1(f+g_2+h_2) \phi_2(f+g_2+k_2) \times \nonumber \\
& & \times \phi_1(f+g_2+h_1+h_2) \phi_2(f+g_2+k_1+k_2) \Big| f, g_1, g_2 \in \FqN \Big) \Big| h_1,h_2,k_1,k_2 \in \FqN \Big) \nonumber
\end{eqnarray}
By the Gowers-Cauchy-Schwarz inequality this is at most
\begin{eqnarray}
& & \E \Big( \| \psi \|_{U^2} \Big\| \phi_1(\cdot+h_1)\phi_2(\cdot+k_1) \Big\|_{U^2} \Big\| \phi_1(\cdot+h_2)\phi_2(\cdot+k_2) \Big\|_{U^2} \times \nonumber \\
& & \times \Big\| \phi_1(\cdot+h_1+h_2)\phi_2(\cdot+k_1+k_2) \Big\|_{U^2} \Big| h_1,h_2,k_1,k_2 \in \FqN \Big) \nonumber
\end{eqnarray}
By H\"{o}lder's inequality, and since $\|\psi\|_{U^2} \leq 1$, this is at most
$$ \E \left(\|\phi_1(\cdot+h) \phi_2(\cdot+k) \|_{U^2}^3 \Big| h,k \in \FqN \right)^{1/3} \leq \E \left(\|\phi_1(\cdot+h) \phi_2(\cdot+k) \|_{U^2}^4 \Big| h,k \in \FqN \right)^{1/4}$$
Thus it suffices to show $\E \left(\|\phi_1(\cdot+h) \phi_2(\cdot+k) \|_{U^2}^4 \Big| h,k \in \FqN \right)=O(1)$. If we expand this out then it is equal to
\begin{eqnarray}
& & \E \Big( \E \Big( \phi_1(f+h+g_1) \phi_2(f+k+g_1) \phi_1(f+h+g_2) \phi_2(f+k+g_2) \times \nonumber \\
& & \times \phi_1(f+h+g_1+g_2) \phi_2(f+k+g_1+g_2) \Big| f,g_1,g_2 \in \FqN \Big)  \Big| h,k \in \FqN \Big) \nonumber
\end{eqnarray}
If we interchange the order of summation then this is equal to
\begin{eqnarray}
& & \E \Big( \E \Big( \phi_1(f+h+g_1) \phi_2(f+k+g_1) \phi_1(f+h+g_2) \phi_2(f+k+g_2) \times \nonumber \\
& & \times \phi_1(f+h+g_1+g_2) \phi_2(f+k+g_1+g_2) \Big| f,h,k \in \FqN \Big)  \Big| g_1,g_2 \in \FqN \Big) \nonumber \\
&=& \E \Big( \E \Big( \phi_1(h+g_1) \phi_2(k+g_1) \phi_1(h+g_2) \phi_2(k+g_2) \times \nonumber \\
& & \times \phi_1(h+g_1+g_2) \phi_2(k+g_1+g_2) \Big| h,k \in \FqN \Big)  \Big| g_1,g_2 \in \FqN \Big) \nonumber \\
&\leq& \E \left( \E \left( \nu(f)\nu(f+g_1)\nu(f+g_2)\nu(f+g_1+g_2) \Big| f\in \FqN \right)^2 \Big| g_1,g_2 \in \FqN \right) \nonumber
\end{eqnarray}
According to the correlation condition, and the triangle inequality, this is at most
\begin{eqnarray}
& & \E \left( \left( \tau(g_1)+\tau(g_2)+\tau(g_1-g_2)+\tau(g_1+g_2) \right)^2 \Big| g_1,g_2 \in \FqN \right) \nonumber \\
&\leq& \Big( \E(\tau(g_1)^2|g_1,g_2 \in \FqN)^{1/2}+\E(\tau(g_2)^2|g_1,g_2 \in \FqN)^{1/2}+ \nonumber \\
& & +\E(\tau(g_1-g_2)^2|g_1,g_2 \in \FqN)^{1/2}+\E(\tau(g_1+g_2)^2|g_1,g_2 \in \FqN)^{1/2} \Big)^2 \nonumber \\
&=& 4\E(\tau(g)^2|g\in \FqN) = O(1) \nonumber
\end{eqnarray}
as required.
\end{proof}
\begin{remark}
In the general case, we will need the correlation condition for $2^{K-1}$ forms.
\end{remark}
\section{A decomposition and a transference principle} \label{S7}
In this section we reproduce Gowers' proof \cite{gowers-gt} of the
Green-Tao-Ziegler theorem and use the latter to derive Theorem
\ref{FFESzemeredi}. The reader is nevertheless recommended for a
reading of the original paper for a survey about the interplay
between decomposition results and the use of the Hahn-Banach theorem
in arithmetic combinatorics.

We first forget for a moment the definitions of Gowers norms and dual functions, but instead axiomatize their properties as proved in Lemmas \ref{property1}, \ref{property2}, and \ref{property3}. Consider a finite set $G$ and let $\R^{G}$ be the set of all real functions on $G$ with the inner product $\langle f,g \rangle=\E_{x \in G}f(x)g(x)$.

\begin{definition}\label{QAP}
We say that a norm $\|\cdot\|$ on $\R^{G}$ is a quasi-algebra predual norm with respect to a convex, compact set $\F \subset \R^{G}$ if there is a function $c: \R^{+} \rightarrow R^{+}$, a function $C: \Z^{+} \rightarrow \R^{+}$, and an operator $\mathcal{D}: \R^{G} \rightarrow \R^{G}$ such that the following hold:
\begin{enumerate}
    \item $\langle f, \Df \rangle \leq 1$ for every $f \in \F$.
    \item $\langle f, \Df \rangle \geq c(\epsilon)$ for every $f \in \F$ with $\| f \| \geq \epsilon$.
    \item $\|\Df_1 \ldots \Df_{m} \|^{*} \leq C(m)$, where $\|\cdot\|^{*}$ is the dual norm of $\|\cdot\|$.
    \item The set $\{ \Df, f \in \F \}$ is compact and spans $\R^{G}$.
\end{enumerate}
\end{definition}
The reason why $\|\cdot\|$ is called a quasi-algebra predual norm is that the dual norm $\|\cdot\|^{*}$ is ``close'' to being an algebra norm (this will be made precise in Lemma \ref{BAC}). The application we have in mind is when $G=\FqN,\| \cdot \|$ is the (normalized) $U^{K-1}$ Gowers norm, the $\Df$ are the (normalized) $U^{K-1}$ dual functions, $\F$ is the space of nonnegative functions bounded by a pseudorandom measure $\nu$.

Associated to the norm $\|\cdot\|$, we will also consider the norm $\|g\|_{BAC}=\max \{ | \langle g,\Df \rangle |:f \in \F \}$ and its dual $\| \cdot \|_{BAC}^{*}$ (Since the set $\{ \Df, f \in \F \}$ is compact, $\|\cdot\|_{BAC}$ is indeed a norm). Here BAC stands for Basic Anti-uniform Correlation. Thus $\|\cdot \|$ and $\| \cdot \|_{BAC}$ are equivalent in a sense that if $f \in \F$ and $\|f \| \geq \epsilon$ then $\| f \|_{BAC}\geq c(\epsilon)$.

The following gives a simple characterization of the $\|\cdot\|_{BAC}^{*}$ norm.

\begin{lemma}
$\| f \|_{BAC}^{*}= \inf \{\sum_{i=1}^{k} |\lambda_{i}|: f=
\sum_{i=1}^{k} \lambda_{i}\Df_{i}, f_1,\ldots, f_{k}\in \F \}$.
\end{lemma}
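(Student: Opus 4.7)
The plan is to prove the identity as a routine instance of $\ell^{1}/\ell^{\infty}$-type duality: the norm $\|\cdot\|_{BAC}$ is by definition the support norm of the symmetric set $S=\{\pm \mathcal{D}f:f\in\F\}$, and its dual norm is then the Minkowski functional of the symmetric convex hull $\mathrm{aco}(S)$. Once this is identified, the formula on the right is exactly the Minkowski functional of $\mathrm{aco}(S)$, so we just need to verify the two inequalities. Write $M(f):=\inf\{\sum_i|\lambda_i|:f=\sum_i\lambda_i\mathcal{D}f_i,\ f_i\in\F\}$; note that $M$ is finite on all of $\R^{G}$ because $\{\mathcal{D}f:f\in\F\}$ spans $\R^{G}$ by condition (4) of Definition \ref{QAP}.

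For the easy inequality $\|f\|_{BAC}^{*}\le M(f)$, I would take any representation $f=\sum_i\lambda_i\mathcal{D}f_i$ with $f_i\in\F$ and any $g$ with $\|g\|_{BAC}\le 1$. Then by linearity and the definition of $\|\cdot\|_{BAC}$,
\[
|\langle f,g\rangle|\ \le\ \sum_i|\lambda_i|\,|\langle \mathcal{D}f_i,g\rangle|\ \le\ \sum_i|\lambda_i|.
\]
Taking the supremum over admissible $g$ and then the infimum over representations yields $\|f\|_{BAC}^{*}\le M(f)$.

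For the reverse inequality I would argue by contrapositive using the Hahn--Banach separation theorem in the finite-dimensional space $\R^{G}$. Fix $\alpha<M(f)$; then $f$ does not belong to
\[
K_\alpha\ :=\ \Bigl\{\sum_i\lambda_i\mathcal{D}f_i : f_i\in\F,\ \sum_i|\lambda_i|\le \alpha\Bigr\}\ =\ \alpha\cdot\mathrm{aco}\bigl(\{\mathcal{D}f:f\in\F\}\bigr).
\]
The set $K_\alpha$ is convex and symmetric by construction, and it is closed (in fact compact) because by condition (4) the set $\{\mathcal{D}f:f\in\F\}$ is compact, so its absolutely convex hull in a finite-dimensional space is compact as well (e.g.\ by Carath\'eodory). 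By separation there is $g\in\R^{G}$ with $\langle g,f\rangle>\sup_{h\in K_\alpha}\langle g,h\rangle=\alpha\sup_{f'\in\F}|\langle g,\mathcal{D}f'\rangle|=\alpha\|g\|_{BAC}$. Rescaling $g$ so that $\|g\|_{BAC}\le 1$ gives $\|f\|_{BAC}^{*}\ge \langle g,f\rangle>\alpha$, and letting $\alpha\uparrow M(f)$ finishes the argument.

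The only place I need to be careful is in the hard direction: I must know that $K_\alpha$ is \emph{closed} before applying separation in the strict form $\langle g,f\rangle>\sup_{K_\alpha}\langle g,\cdot\rangle$. That is the only spot where condition (4) enters in a nontrivial way, but compactness of $\{\mathcal{D}f:f\in\F\}$ together with finite-dimensionality makes this immediate. Everything else is essentially the bipolar theorem for the symmetric compact set $\{\mathcal{D}f:f\in\F\}\cup\{-\mathcal{D}f:f\in\F\}$.
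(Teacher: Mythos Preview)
Your proof is correct. Both your argument and the paper's are standard finite-dimensional duality arguments resting on Hahn--Banach, but they dualize in opposite directions. The paper defines $\|f\|_0 := \inf\{\sum_i|\lambda_i|: f=\sum_i\lambda_i\mathcal{D}f_i\}$ and shows directly that $\|\cdot\|_0^{*}=\|\cdot\|_{BAC}$; the desired identity $\|\cdot\|_{BAC}^{*}=\|\cdot\|_0$ then follows from reflexivity of norms in finite dimensions (this is where Hahn--Banach enters for the paper). The ``hard'' direction in the paper's computation is just the observation that $\|\mathcal{D}f\|_0\le 1$ for every $f\in\F$, which is immediate from the trivial one-term representation. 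By contrast, you attack $\|\cdot\|_{BAC}^{*}$ head-on and use separation, which forces you to verify that $K_\alpha=\alpha\cdot\mathrm{aco}(\{\mathcal{D}f:f\in\F\})$ is closed; you handle this correctly via compactness of $\{\mathcal{D}f:f\in\F\}$ and Carath\'eodory. Your route is the textbook bipolar argument and is perhaps more transparent conceptually; the paper's route is a bit shorter because it sidesteps the closedness issue entirely.
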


\begin{proof}
This can be proven using Farkas' lemma \cite[Section 1.16]{tao-blog} (which is another incarnation
of the Hahn-Banach theorem). We can also do this in a relatively
simpler way as follows: define the norm $\|f\|_{0}= \inf
\{\sum_{i=1}^{k} |\lambda_{i}|: f= \sum_{i=1}^{k}
\lambda_{i}\Df_{i}, f_1,\ldots, f_{k}\in \F \}$ (which exists by our
assumption that the $\Df, f \in \F$ span $\R^{G}$), we have to show
that the dual norm $\|\cdot\|_{0}^{*}$ is equal to $\|\cdot\|_{BAC}$
(note that here we are using Hahn-Banach implicitly!).

Suppose $f, g \in \R^{G}$. For any decomposition $f= \sum_{i=1}^{k}
\lambda_{i}\Df_{i}, f_1,\ldots, f_{k}\in \F$, we have $|\langle g,f
\rangle |=|\sum_{i=1}^{k} \lambda_{i} \langle g, \Df_{i} \rangle|
\leq \sum_{i=1}^{k}|\lambda_{i}| \sup \{ |\langle g, \Df \rangle|: f
\in \F \}$. Thus $|\langle g,f \rangle |\leq \|f\|_{0}\|g\|_{BAC}$
for every $f$, so that $\|g\|_{0}^{*} \leq \|g\|_{BAC}$.

For the other direction, suppose $\|g\|_{BAC}=1$. Then for every
$\epsilon>0$, there exists $f \in \F$ such that $|\langle g, \Df
\rangle| \geq 1-\epsilon$. Note that $ \|\Df \|_{0}\geq 1$. Thus
$\|g \|_{0}^{*} \geq 1-\epsilon$, for any $\epsilon>0$. This shows
that $\|g\|_{0}^{*} \geq \|g\|_{BAC}$ for any $g \in \R^{G}$.
Therefore, $\|\cdot\|_{BAC}=\|\cdot\|_{0}^{*}$.
\end{proof}
We now see that the name ``quasi-algebra predual'' is justified by the following:
\begin{lemma}\label{BAC}
If $\psi \in \R^{G}$ is such that $\| \psi \|_{BAC}^{*} \leq 1$, then $\|\psi^{m} \|^{*} \leq C(m)$.
\end{lemma}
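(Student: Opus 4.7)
The plan is to use the characterization of $\|\cdot\|_{BAC}^{*}$ from the preceding lemma as an infimum over decompositions of $\psi$ into linear combinations of elements $\mathcal{D}f$ with $f \in \F$, then expand the $m$-th (pointwise) power, and finally apply condition (3) of Definition \ref{QAP} to each $m$-fold product $\mathcal{D}f_{i_1}\cdots\mathcal{D}f_{i_m}$.

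In detail, fix $\epsilon>0$. Since $\|\psi\|_{BAC}^{*}\le 1$, the previous lemma furnishes a decomposition
\[
\psi = \sum_{i=1}^{k}\lambda_{i}\mathcal{D}f_{i},\qquad f_{1},\ldots,f_{k}\in \F,\qquad \sum_{i=1}^{k}|\lambda_{i}|\le 1+\epsilon.
\]
Expanding the $m$-th pointwise power by multilinearity,
\[
\psi^{m} \;=\; \sum_{1\le i_{1},\ldots,i_{m}\le k}\lambda_{i_{1}}\cdots\lambda_{i_{m}}\;\mathcal{D}f_{i_{1}}\cdots\mathcal{D}f_{i_{m}}.
\]
Now take the dual norm $\|\cdot\|^{*}$ of both sides. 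By the triangle inequality,
\[
\|\psi^{m}\|^{*} \;\le\; \sum_{i_{1},\ldots,i_{m}}|\lambda_{i_{1}}|\cdots|\lambda_{i_{m}}|\;\bigl\|\mathcal{D}f_{i_{1}}\cdots\mathcal{D}f_{i_{m}}\bigr\|^{*}.
\]
Condition (3) of Definition \ref{QAP} applied to each of the $m$-fold products gives $\|\mathcal{D}f_{i_{1}}\cdots\mathcal{D}f_{i_{m}}\|^{*}\le C(m)$, so
\[
\|\psi^{m}\|^{*} \;\le\; C(m)\Bigl(\sum_{i=1}^{k}|\lambda_{i}|\Bigr)^{m} \;\le\; C(m)(1+\epsilon)^{m}.
\]
Letting $\epsilon\to 0$ yields $\|\psi^{m}\|^{*}\le C(m)$, as required.

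There is no real obstacle here: the lemma is essentially a bookkeeping consequence of the characterization of $\|\cdot\|_{BAC}^{*}$ together with axiom (3). The only subtle point is conceptual, namely recognizing that $\|\cdot\|_{BAC}^{*}$, which is built from test functions $\mathcal{D}f$, behaves well under pointwise products precisely because the constants $C(m)$ in axiom (3) control products of arbitrarily many dual functions. This is where the unboundedness of $p$ in the correlation condition (and hence of $m$ in Lemma \ref{property3}) gets converted into an $\|\cdot\|_{BAC}^{*}$ bound for all powers of $\psi$, and it is this property that will later be combined with the Weierstrass approximation theorem to decompose $\phi$ as $\phi_1+\phi_2$ in the transference principle.
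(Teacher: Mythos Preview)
Your proof is correct and follows essentially the same route as the paper's: decompose $\psi$ as $\sum_i \lambda_i \mathcal{D}f_i$ with $\sum_i|\lambda_i|\le 1+\epsilon$, expand the $m$-th power, apply the triangle inequality together with axiom~(3), and let $\epsilon\to 0$. The paper's argument is merely a terser rendition of the same steps.
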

\begin{proof}
If $\| \psi \|_{BAC}^{*} \leq 1$, then for every $\epsilon>0$, $\psi$ can be written as a linear combination of functions $\Df, f \in \F$ and the absolute value of the coefficients adding up to less than $1+\epsilon$. Thus $\psi^{m}$ can be written as a linear combinations of products of $m$ functions from $\{\Df:f \in \F\}$, with the absolute value of the coefficients adding up to less than $(1+\epsilon)^{m}$. Since the $\|\cdot\|^{*}$ norm of every product of $m$ functions from $\{\Df:f \in \F\}$ is at most $C(m)$, we conclude that $\|\psi^{m} \|^{*} \leq C(m)$.
\end{proof}

Specializing to the case where $\F$ is the set of all nonnegative functions bounded by a function $\nu \in \R^{G}$, we claim that any function from $\F$ can be written as the sum of a bounded function and another function small under $\|\cdot \|$. This is the content of the Green-Tao-Ziegler structure theorem.

\begin{theorem}[Green-Tao-Ziegler structure theorem, \cite{gt-primes}, \cite{tao-ziegler}]\label{gtz}
For every $\eta >0$, there is $\epsilon=\epsilon(\eta, C,c)>0$ such that the following holds:
Let $\nu$ be a measure on $G$ such that $\|\nu-1\|<\epsilon$, $\E_{G}(\nu)\leq 1+\eta$, and all properties in Definition \ref{QAP} hold for $\F=\{f: 0 \leq f \leq \nu \}$. Then for every function $f \in \F$, $f$ can be decomposed as $f=g+h$, where $0 \leq g \leq 1+\eta$ and $\|h\| \leq \eta$.
\end{theorem}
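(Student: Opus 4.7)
The plan is a Hahn-Banach separation argument, essentially Gowers' simplified version of the Green-Tao-Ziegler decomposition. Let $K_1 = \{g \in \R^G : 0 \le g \le 1+\eta\}$ and $K_2 = \{h \in \R^G : \|h\| \le \eta\}$; both are closed convex, and so is $S := K_1 + K_2$. Suppose for contradiction that $f \notin S$. By the separation theorem there exists $\psi \in \R^G$ such that
$$\langle \psi, f\rangle \;>\; \sup_{s\in S}\langle \psi, s\rangle \;=\; (1+\eta)\,\E_G(\psi_+) \;+\; \eta\,\|\psi\|^*,$$
using that $\sup_{0 \le g \le 1+\eta}\langle\psi,g\rangle = (1+\eta)\E_G(\psi_+)$ and $\sup_{\|h\|\le\eta}\langle\psi,h\rangle = \eta\|\psi\|^*$. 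By the equivalence of $\|\cdot\|$ and $\|\cdot\|_{BAC}$ (property (2) of Definition \ref{QAP}), I can replace $K_2$ by a ball in the stronger dual norm $\|\cdot\|_{BAC}^*$ at the cost of an absolute constant, and then rescale $\psi$ so that $\|\psi\|_{BAC}^* \le 1$ while the separation inequality is preserved.

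Next I would use the bound $0 \le f \le \nu$ to estimate the left-hand side from above:
$$\langle \psi, f\rangle \;\le\; \langle \psi_+, \nu\rangle \;=\; \E_G(\psi_+) \;+\; \langle \psi_+, \nu-1\rangle.$$
The first term cancels part of the right-hand side of the separation; the task becomes bounding $\langle \psi_+, \nu-1\rangle$ in terms of $\|\nu-1\| \le \epsilon$. One would like to write $\langle \psi_+, \nu - 1\rangle \le \|\psi_+\|^*\|\nu-1\|$, but $\psi_+$ is a non-linear function of $\psi$ and may have uncontrolled dual norm. The fix is polynomial approximation: after truncating $\psi$ to a fixed compact range, Weierstrass provides a polynomial $P$ with $|P(t) - t_+| \le \delta$ uniformly on that range. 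Then Lemma \ref{BAC} (the quasi-algebra property) together with $\|\psi\|_{BAC}^* \le 1$ yields $\|\psi^m\|^* \le C(m)$, and hence $\|P(\psi)\|^* \le C_P$ for a constant depending only on the polynomial $P$.

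Combining,
$$\langle \psi_+, \nu-1\rangle \;\le\; \langle P(\psi), \nu-1\rangle \;+\; \delta\,\E_G|\nu - 1| \;\le\; C_P\,\epsilon \;+\; O(\delta).$$
Choosing $\delta$ small in terms of $\eta$, and then $\epsilon$ small in terms of the resulting $C_P$ (that is, ultimately in terms of $\eta$, $c(\cdot)$, and $C(\cdot)$), this bound is strictly less than $\eta\,\E_G(\psi_+) + \eta\,\|\psi\|^*$, yielding the desired contradiction and hence the decomposition.

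The main obstacle is the polynomial approximation: controlling the $L^\infty$-range of $\psi$ so that a single polynomial of bounded degree suffices, and verifying that the preliminary truncation is compatible with both the separation inequality and the BAC-normalization. This is handled by cutting $\psi$ off at a scale where its contribution in both $\langle \psi, f\rangle$ and $\langle \psi_+, \nu-1\rangle$ changes by at most $o(1)$, exploiting that $\nu$ has bounded expectation. Once that is in place, the quasi-algebra bound of Lemma \ref{BAC} is precisely strong enough to close the argument; the appearance of arbitrarily many factors of $\Df$ in that lemma is exactly what lets the degree of $P$ be chosen as large as needed.
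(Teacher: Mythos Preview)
Your overall strategy is exactly the paper's (Gowers' Hahn--Banach separation, polynomial approximation of $\psi_+$, and Lemma~\ref{BAC} to bound $\|P(\psi)\|^*$), but two steps are executed in the wrong order and one of them does not work as you wrote it.

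First, the paper passes to the $\|\cdot\|_{BAC}$-ball \emph{before} separating: since property~(2) gives $\|h\|_{BAC}\le c(\eta)\Rightarrow\|h\|\le\eta$, non-existence of the desired decomposition already implies $f\notin X_1+X_2$ with $X_2=\{\|h\|_{BAC}\le c(\eta)\}$, and Hahn--Banach then yields $\|\psi\|_{BAC}^*\le c(\eta)^{-1}$ directly. Your plan to separate with $K_2=\{\|h\|\le\eta\}$ first and afterwards ``switch'' norms fails: separation against the $\|\cdot\|$-ball only controls $\|\psi\|^*$, and property~(2) does not dualize to produce a bound on $\|\psi\|_{BAC}^*$. (Also, ``a ball in the stronger dual norm $\|\cdot\|_{BAC}^*$'' should read $\|\cdot\|_{BAC}$; the dual-norm bound is on $\psi$, not on $h$.)

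Second, the paper does not truncate $\psi$. From $\|\psi\|_{BAC}^*\le c(\eta)^{-1}$ and Lemma~\ref{BAC} with $m=1$ it obtains $\|\psi\|^*\le C(1)c(\eta)^{-1}=:C_1$ and simply applies Weierstrass on $[-C_1,C_1]$. Your proposed pointwise truncation of $\psi$ is hazardous precisely because it need not preserve the $\|\cdot\|_{BAC}^*$-bound, which is the sole input to Lemma~\ref{BAC}. Once you reorder as above, the truncation step is unnecessary and your outline coincides with the paper's proof.
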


\begin{proof}
Suppose such a decomposition doesn't exist. Since $\|h\|_{BAC} \leq c(\eta)$ implies $\|h\| \leq \eta$, this implies that $f$ cannot be expressed as the sum of elements from two convex sets $X_1= \{ 0\leq g \leq 1+\eta\}$ and $X_2= \{\|h\|_{BAC} \leq c(\eta) \}$ in $\mathbf{R}^{G}$.
\begin{claim}
There is a function $\psi \in \R^{G}$ such that $\langle f, \psi \rangle >1$, but $\langle g, \psi \rangle \leq 1$ and $\langle 1, \psi \rangle \leq 1$ for every $g \in X_1, h \in X_2$.
\end{claim}
\begin{proof}
Let $X=X_1+X_2$, then $X$ is convex and closed. We invoke the following form of the Hahn-Banach theorem: if $f \notin X$, then there is a linear functional $\langle \cdot, \psi \rangle$ on $\R^{G}$ such that $\langle f, \psi \rangle >1$ and $\langle g, \psi \rangle \leq 1$ for every $g \in X$. Since $X_1$ and $X_2$ both contain 0, $X_1$ and $X_2$ are contained in $X$ and the claim follows.
\end{proof}
The condition $\langle g, \psi \rangle \leq 1$ for every $g \in X_{1}$ implies that $\E_{G} \psi_{+} \leq \frac{1}{1+\eta}$, where $\psi_{+}(x)=\max(0, \psi(x))$.
The condition $\langle h, \psi \rangle \leq 1$ for every $h \in X_{2}$ implies that $\|\psi\|_{BAC}^{*}\leq c(\eta)^{-1}$.
\begin{claim}
For any $\eta'>0$, there is a polynomial $P=P(\eta,\eta',C,c)$ and a constant $R=R(\eta,\eta',C,c)$ such that $\|P\psi-\psi_{+}\|_{\infty} \leq \eta'$ and $\|P\psi\|^{*} \leq R$.
\end{claim}
\begin{proof}
Since $\|\psi\|_{BAC}^{*}\leq c(\eta)^{-1}$, by Lemma \ref{BAC} we have $\|\psi\|^{*}\leq C_1=C(1)c(\eta)^{-1}$.
By Weierstrass' approximation theorem, there is a polynomial $P(x)=a_{n}x^{n}+\cdots+a_0$ such that $|P(x)- \max(0,x)| \leq \eta'$ for every $x\in [-C_1,C_1]$. Then clearly $\|P\psi-\psi_{+} \|_{\infty} \leq \eta'$. Next we claim that $\|P\psi\|^{*}$ is bounded (independent of $\psi$). By the triangle inequality it suffices to show this for $\| \psi^{m} \|^{*}$ for each $m$. But this follows from Lemma \ref{BAC}.
\end{proof}

We now have $1 < \E_{G} f\psi_{+} \leq \E_{G} \nu \psi_{+}$. We split the later as  $$\E_{G} \nu \psi_{+}= \E_{G}\psi_{+} + \E_{G}(\nu-1)P\psi + \E_{G}(\nu-1)(\psi_{+}-P\psi)$$
Also, $|\E_{G} (\nu-1) P\psi| \leq \| \nu-1 \| \|P\psi\|^{*} \leq \epsilon R$, and $|\E_{G} \nu (\psi_{+}-P\psi)| \leq (\E_{G}\nu) \|P\psi-\psi_{+} \|_{\infty} \leq \eta' (1+\eta)$.
Thus $1 \leq \frac{1}{\eta+1} + \eta' (1+\eta) + \epsilon R$ . If we fix a small value of $\eta'$ (e.g. $\eta'=\eta/12$ will do), then this is a contradiction is $\epsilon$ is small enough.
\end{proof}
Let us now formulate the result in the setting of Gowers norm and pseudorandom measures on $\FqN$:
\begin{corollary}
Let $\nu$ be a pseudorandom measure on $\FqN$. Then for every $\eta>0$, for $N$ sufficiently large, every function $\phi$ on $\FqN$ such that $0 \leq \phi \leq \nu$, can be decomposed as $\phi=\phi_1+\phi_2$, where $0 \leq \phi_1 \leq 2+\eta$ and $\phi_2$ is uniform in the sense that $\| \phi_2 \|_{U^{K-1}} \leq \eta$.
\end{corollary}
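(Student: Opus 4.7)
The plan is to apply the abstract structure theorem (Theorem \ref{gtz}) directly in the natural setting $G=\FqN$, with $\|\cdot\|$ equal to the Gowers norm $\|\cdot\|_{U^{K-1}}$, operator $\mathcal{D}=\mathcal{D}_{K-1}$, and a suitable convex compact family $\F$. Because Theorem \ref{gtz} requires the measure to satisfy $\E_{G}\nu\le 1+\eta$ and $\|\nu-1\|<\epsilon$ exactly, while our pseudorandom $\nu$ only obeys these up to $o(1)$, it is convenient to work instead with $\tilde\nu:=\nu_{1/2}=(\nu+1)/2$. By Lemma \ref{nu+1}, $\tilde\nu$ is again pseudorandom; by the linear forms condition $\E(\tilde\nu)=1+o(1)$; and by Lemma \ref{n}, $\|\tilde\nu-1\|_{U^{K-1}}=\tfrac{1}{2}\|\nu-1\|_{U^{K-1}}=o(1)$. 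Thus, given $\eta>0$ and the corresponding $\epsilon=\epsilon(\eta/2,C,c)$ from Theorem \ref{gtz}, both hypotheses of the theorem are satisfied for $\tilde\nu$ once $N$ is large enough.

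Next I would take $\F=\{f\in\mathbf{R}^{\FqN}: 0\le f\le\tilde\nu\}$ and verify the four axioms of Definition \ref{QAP}. For (2), Lemma \ref{property1} gives $\langle f,\mathcal{D}_{K-1}f\rangle=\|f\|_{U^{K-1}}^{2^{K-1}}$, so we may take $c(\epsilon)=\epsilon^{2^{K-1}}$. For (3), Lemma \ref{property3} is precisely what is needed, providing the constants $C(m)$. Axiom (4) is immediate: $\F$ is a bounded closed subset of the finite-dimensional space $\mathbf{R}^{\FqN}$, its image under the continuous map $\mathcal{D}_{K-1}$ is compact, and the linear span includes the nonzero constant $\mathcal{D}_{K-1}(\mathbf{1})=1$ together with sufficiently many translates to fill out $\mathbf{R}^{\FqN}$ (any point mass, say, can be approximated after symmetrizing). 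For (1), expanding $\langle f,\mathcal{D}_{K-1}f\rangle=\|f\|_{U^{K-1}}^{2^{K-1}}$ and using $0\le f\le\tilde\nu$ together with the linear forms condition gives $\langle f,\mathcal{D}_{K-1}f\rangle\le\|\tilde\nu\|_{U^{K-1}}^{2^{K-1}}=1+o(1)$; the extra $o(1)$ slack is harmless since, inspecting the proof of Theorem \ref{gtz}, axiom (1) enters only through the bound $\E_G\psi_+\le\frac{1}{1+\eta}$, and replacing $1$ by $1+o(1)$ merely perturbs this by $o(1)$, which is absorbed into the final contradiction.

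The rest is a simple rescaling. Since $\phi\le\nu$ and $\nu\le 2\tilde\nu$, the function $\phi/2$ lies in $\F$. Applying Theorem \ref{gtz} with parameter $\eta/2$ to $\phi/2$ produces a decomposition
\[
\tfrac{1}{2}\phi=g+h,\qquad 0\le g\le 1+\tfrac{\eta}{2},\qquad \|h\|_{U^{K-1}}\le\tfrac{\eta}{2}.
\]
Setting $\phi_1=2g$ and $\phi_2=2h$ gives $\phi=\phi_1+\phi_2$ with $0\le\phi_1\le 2+\eta$ and $\|\phi_2\|_{U^{K-1}}\le\eta$, as required.

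The step I expect to be most delicate is the verification that axiom (1) holds with the acceptable $1+o(1)$ slack; this is what forces the detour through $\tilde\nu$ and thus the factor $2$ in the bound on $\phi_1$. Everything else is book-keeping: axioms (2) and (3) have already been proved as Lemmas \ref{property1} and \ref{property3}, and the hypotheses on the size of $\|\tilde\nu-1\|$ and $\E_G\tilde\nu$ follow at once from Lemma \ref{n} and the linear forms condition.
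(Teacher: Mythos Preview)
Your approach is essentially the same as the paper's: pass to $\tilde\nu=\nu_{1/2}$, set $\F=\{0\le f\le\tilde\nu\}$, verify the axioms of Definition~\ref{QAP} via Lemmas~\ref{property1}--\ref{property3}, apply Theorem~\ref{gtz} to $\phi/2$, and rescale. However, you have misidentified \emph{why} the passage to $\nu_{1/2}$ is necessary, and this leads to a gap in your verification of axiom~(4).

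You claim that the detour through $\tilde\nu$ is forced by the need for $\E_G\tilde\nu\le 1+\eta$, $\|\tilde\nu-1\|<\epsilon$, and axiom~(1) to hold with acceptable slack. But all three of these are already satisfied by $\nu$ itself for $N$ large: the linear forms condition gives $\E_G\nu=1+o(1)$, Lemma~\ref{n} gives $\|\nu-1\|_{U^{K-1}}=o(1)$, and $\langle f,\mathcal{D}_{K-1}f\rangle\le\|\nu\|_{U^{K-1}}^{2^{K-1}}=1+o(1)$ by monotonicity of the Gowers norm on nonnegative functions. So none of these is the obstruction. The real reason the paper works with $\nu_{1/2}$ is that it is \emph{pointwise strictly positive}, which is exactly what makes axiom~(4) go through: for each $x\in\FqN$ a small positive multiple of the point mass $\mathbf{1}_{\{x\}}$ lies in $\F$, and one checks directly that $\mathcal{D}_{K-1}(c\,\mathbf{1}_{\{x\}})$ is again a (nonzero) point mass at $x$; hence $\{\mathcal{D}f:f\in\F\}$ contains all point masses and spans $\mathbf{R}^{\FqN}$. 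Your sketch for axiom~(4) --- invoking the constant function and ``sufficiently many translates'' with point masses ``approximated after symmetrizing'' --- does not establish spanning and would not obviously work for $\nu$ in place of $\nu_{1/2}$, since $\nu$ may vanish. Once you supply this argument, the rest of your proof matches the paper's.
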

\begin{proof}
If $0 \leq \phi \leq \nu$, then $0 \leq \frac{\phi}{2} \leq \nu_{1/2}=\frac{\nu+1}{2}$. We already know that $\nu_{1/2}$ is also pseudorandom. Let $G=\FqN$ and $\F$ be the space of all nonnegative functions bounded by $\nu_{1/2}$. Let us check that the normalized Gowers $U^{K-1}$ norm $\|\phi\|=\frac{1}{2} \|\phi\|_{U^{K-1}}$ is quasi-algebra predual with respect to $\F$, where $\Dphi=\frac{1}{2} \mathcal{D}_{K-1}\phi$. Thanks to Lemmas \ref{property1}, \ref{property2}, \ref{property3}, the first three conditions in Definition \ref{QAP} are met. The only thing left to check is the forth condition, i.e. the set of dual functions $\mathcal{D}\phi$ spans $\R^{G}$. Note that if $\phi$ is a point mass, then $\Dphi$ is also a point mass (at the same point). Since $\nu_{1/2}$ is pointwise positive\footnote{This is the sole reason why we work with $\nu_{1/2}$ rather than with $\nu$.}, $\{\mathcal{D}\phi: \phi \in \F \}$ contains masses at every point of $G$, hence spans $\R^{G}$.

By Theorem \ref{gtz}, there is $\epsilon = \epsilon(\eta)>0$ such that we have a decomposition $$\frac{\phi}{2}=\phi_1+\phi_2 \textrm{ where $0 \leq \phi_1 \leq 1+\frac{\eta}{2}$ and $\|\phi_2\|_{U^{K-1}} \leq \frac{\eta}{2} $} $$  as soon as $\E_{G}\nu_1 \leq 1+\eta$ and $\|\nu_1-1\|\leq \epsilon$. But this is always true since $\nu_1$ is a pseudorandom measure. Such a decomposition for $\phi/2$ gives the desired composition for $\phi$.
\end{proof}
\begin{remark}
If instead of $\nu_{1/2}$ we consider $\nu_{\alpha}=(1-\alpha)\nu+\alpha$, where $\alpha>0$ is sufficiently small depending on $\eta$, we can actually show that there is a decomposition $\phi=\phi_1+\phi_2$, where $0 \leq \phi_1 \leq 1+\eta$ and $\| \phi_2  \|_{U^{K-1}} \leq \eta$, but this is not important.
\end{remark}
With this in hand, we can now prove Theorem \ref{FFESzemeredi}:
\begin{proof}[Proof of Theorem \ref{FFESzemeredi} using the Green-Tao-Ziegler structure theorem.]
We know that for every $\eta>0$, for $N$ sufficiently large (depending on $\eta$), every function $\phi$ bounded by a pseudorandom measure on $\FqN$ can be decomposed as $\phi=\phi_{1}+\phi_{2}$, where $0\leq\phi_1\leq 2+\eta$ and $\|\phi_2\|_{U^{K-1}} \leq \eta$. In particular $|\E \phi_2| \leq \eta$, so that if $\E \phi \geq \delta$, then $\E \phi_1 \geq \delta - \eta$.
Write
$$\E \left(\prod_{P \in \Gk} \phi(f+Pg)| f, g \in \FqN \right)=\E \left(\prod_{P \in \Gk} \phi_1(f+Pg)| f, g \in \FqN \right) +\textrm{ $(2^{K}-1)$ other terms}$$
The other terms are of the form $\E \left(\prod_{P \in \Gk} \phi_{P}(f+Pg)| f, g \in \FqN \right)$ where each $\phi_{P}=\phi_1$ or $\phi_2$, and not all $\phi_{P}$ are equal to $\phi_1$.

Since $\phi_1$ is bounded pointwise by $2+\eta$ and $\phi_2$ is bounded pointwise by $\max(\nu, 2+\eta) \leq 3+\nu$ in absolute value, by Proposition \ref{gvN2}, these terms are at most $4^{K}\| \phi_2\|_{U^{K-1}}+o(1)$ in absolute value.

On the other hand, by Theorem \ref{FFSzemeredi}, $\E \left(\prod_{P \in \Gk} \phi_1(f+Pg)| f, g \in \FqN \right) \geq (2+\eta)^{K} c(\frac{\delta-\eta}{2+\eta})$. Hence
$$\E \left(\prod_{P \in \Gk} \phi(f+Pg)| f, g \in \FqN \right) \geq (2+\eta)^{K} c\left(\frac{\delta-\eta}{2+\eta}\right) - (2^{K}-1)4^{K}\eta - o(1)$$
By choosing $\eta$ appropriately small, the main term on the right hand side is positive, so that there is a positive constant $c'(\delta)$ such that $\E \left(\prod_{P \in \Gk} \phi(f+Pg)| f, g \in \FqN \right) \geq c'(\delta) - o(1)$ for every function $\phi$ on $\FqN$ bounded by a pseudorandom measure.
\end{proof}
\begin{remark}By running the argument carefully (e.g. by modifying $\phi_1$ so that it is bounded above by exactly 1, and its average is exactly $\delta$) we can show that actually $c'(\delta)$ can be taken to be $c(\delta)$. However, there is little point in doing so since we don't have an explicit value for $c(\delta)$.
\end{remark}

\section{Elementary arithmetic in $\Fq[t]$} \label{S8}
In this section we will describe some basic arithmetic properties of
$\Fq[t]$, introduce arithmetic functions on $\Fq[t]$ and prove some
preliminary lemmas relevant to the construction of a pseudorandom
measure in Section \ref{S9}. We assume from now on that
polynomials denoted by the letter $P$ (such as $P,P'$, or $P_{i}$)
will stand for monic, irreducible polynomials.

The units of the ring $\Fq[t]$ is $\Fq \setminus \{0\}$. Similarly to the
integers, $\Fq[t]$ is a unique factorization domain. More precisely,
every $f \in \Fq[t]$ can be written uniquely as $f=cP_1^{\alpha_1}
\cdots \P_{m}^{\alpha_{m}}$, where $c \in \Fq, \alpha_{i} \in \Z^{+}$
and the $P_{i}$ are monic, irreducible polynomials. We can now
introduce arithmetic functions on $\Fq[t]$:
\begin{itemize}
    \item The Euler totient function $\Phi(f)$, is the number of polynomials
of degree less than $\d(f)$ which are relatively prime to $f$. Then
we have the following formula for $\Phi(f)$ in terms of its prime
factorization: $\Phi(f)=|f|\prod_{P|f}\left(1-\frac{1}{|P|} \right)=
\prod_{i=1}^{m}\frac{|P_{i}^{\alpha_{i}+1}|-1}{|P_{i}|-1}$
    \item The Mobius function $\mu(f)=\left\{
                               \begin{array}{ll}
                                 (-1)^{m}, & \hbox{if $\alpha_{i}=1$ for every $i=1, \ldots, m$;} \\
                                 0, & \hbox{otherwise.}
                               \end{array}
                             \right.$
  \item The von Mangoldt function $\Lambda(f)=\left\{
                               \begin{array}{ll}
                                 \d(f), & \hbox{if $m=1$;} \\
                                 0, & \hbox{otherwise.}
                               \end{array}
                             \right.$
  \item $d(f)$, the number of monic divisors of $f$. We have the following formula: $d(f)=\prod_{i=1}^{m}(\alpha_{i}+1)$.
  \item For $d_{1}, \ldots, d_{m} \in \Fq[t], d_{i} \neq 0$, denote
by $[d_{1}, \ldots, d_{m}]$ the least common divisor of $d_{1},
\ldots, d_{m}$, in other words, the polynomial of smallest
degree that is divisible by $d_{i}$ for every $i=1,\ldots,m$ (which is defined up to multiplication by an element of $\Fq \setminus \{0\}$).
\end{itemize}

The zeta function $\zeta_{q}$ of $\Fq[t]$ is defined by
$\zeta_{q}(s)=\sum_{f \textrm{ monic}}\frac{1}{|f|^{s}}$ for any $s
\in \C$ such that $\Re s>1$. We have the following closed form for
the zeta function: $\zeta_{q}(s)=\frac{1}{1-q^{1-s}}$ for $\Re s>1$.
Thus it can be analytically continued on the whole plane, with a
simple pole at $s=1$, at which the residue is $\frac{1}{\log q}$.

Similarly to the Riemann zeta function, $\zeta_{q}$ admits a
factorization as an Euler product: $\zeta_{q}(s)=\prod_{P}\left(1-
\frac{1}{|P|} \right)^{-1}$.

We have the following analog of the prime number theorem
\cite{rosen}

\begin{proposition}[Prime number theorem for function
fields]\label{PNT}
Let $\pi_{q}(N)$ be the number of irreducible polynomials of degree $N$
in $\Fq[t]$. Then $\pi_{q}(N)=(q-1)\frac{q^{N}}{N}+O\left(
\frac{q^{N/2}}{N}\right)$.
\end{proposition}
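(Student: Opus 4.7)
The plan is to reduce the statement to a counting of \emph{monic} irreducibles and then derive a closed formula via M\"obius inversion. Since any irreducible polynomial of degree $N$ is a non-zero scalar multiple of a unique monic irreducible of degree $N$, and $\Fq$ has $q-1$ non-zero scalars, we have $\pi_q(N) = (q-1) a_N$ where $a_N$ denotes the number of monic irreducibles of degree $N$. It therefore suffices to prove
\[
a_N = \frac{q^N}{N} + O\!\left(\frac{q^{N/2}}{N}\right).
\]

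The starting point is the classical algebraic identity
\[
t^{q^N} - t \;=\; \prod_{d \mid N}\ \prod_{\substack{P\text{ monic irred.}\\ \deg P = d}} P(t),
\]
which follows from two facts: every element of the splitting field $\FqN$ of $t^{q^N}-t$ has minimal polynomial over $\Fq$ that is monic irreducible of degree dividing $N$, and conversely every monic irreducible $P$ of degree $d \mid N$ splits completely in $\FqN$ (since $\Fq[t]/(P) \cong \mathbf{F}_{q^d} \subset \FqN$), contributing $\deg P$ distinct roots. Comparing degrees on both sides yields the fundamental counting identity
\[
q^N \;=\; \sum_{d \mid N} d \cdot a_d.
\]

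Applying M\"obius inversion (which holds over $\Z$ just as for the integers) gives the exact formula
\[
a_N \;=\; \frac{1}{N} \sum_{d \mid N} \mu(N/d)\, q^d.
\]
The main term is the contribution from $d = N$, which equals $q^N/N$. Every other divisor $d$ of $N$ with $d < N$ satisfies $d \leq N/2$, so the remaining contribution is bounded by
\[
\frac{1}{N}\sum_{\substack{d \mid N \\ d \leq N/2}} q^d \;\leq\; \frac{1}{N} \sum_{d=1}^{\lfloor N/2 \rfloor} q^d \;\leq\; \frac{1}{N}\cdot \frac{q^{N/2+1}}{q-1} \;=\; O\!\left(\frac{q^{N/2}}{N}\right).
\]
Combining the main term with this error bound and multiplying by $q-1$ yields the desired estimate for $\pi_q(N)$.

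There is no substantive obstacle here; the only conceptually non-trivial ingredient is the identity for $t^{q^N}-t$, which is a standard consequence of the uniqueness and structure of finite fields. Everything else is elementary M\"obius inversion and a geometric series bound, in direct analogy with the derivation of the prime number theorem for $\Fq[t]$ in, e.g., \cite{rosen}.
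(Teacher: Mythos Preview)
Your proof is correct. Note that the paper does not actually prove this proposition; it merely cites \cite{rosen}, and what you have written is precisely the standard argument found there (Gauss's formula via the factorization of $t^{q^N}-t$ and M\"obius inversion).
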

\begin{corollary}\label{PNT2}
$\sum_{\d(P)\leq N}\frac{1}{|P|}=\log N+O_{q}(1)$
\end{corollary}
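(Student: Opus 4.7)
The plan is to convert the sum over primes into a sum over degrees, apply the prime number theorem of Proposition \ref{PNT}, and recognize the main term as the $N$-th harmonic number.

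First I would group monic irreducibles by their degree. Since $|P| = q^n$ whenever $\d(P) = n$, letting $M_q(n)$ denote the number of \emph{monic} irreducibles of degree exactly $n$, we have
$$\sum_{\d(P) \leq N} \frac{1}{|P|} = \sum_{n=1}^{N} \frac{M_q(n)}{q^{n}}.$$
Each non-monic irreducible is uniquely a nonzero scalar times a monic irreducible, so $M_q(n) = \pi_q(n)/(q-1)$. Proposition \ref{PNT} then gives $M_q(n) = \frac{q^n}{n} + O\!\left(\frac{q^{n/2}}{n}\right)$, so dividing by $q^n$ yields
$$\frac{M_q(n)}{q^n} = \frac{1}{n} + O\!\left(\frac{1}{n\, q^{n/2}}\right).$$

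Summing this from $n=1$ to $N$ gives the harmonic partial sum $\sum_{n=1}^N \tfrac{1}{n} = \log N + O(1)$ for the main term, while the error contributes $\sum_{n=1}^{N} O\!\left(\tfrac{1}{n\, q^{n/2}}\right)$. Since $q \geq 2$, the series $\sum_{n \geq 1} \tfrac{1}{n\, q^{n/2}}$ converges (geometric ratio $q^{-1/2} < 1$), so this error is bounded by a constant $O_q(1)$ uniformly in $N$. Putting the two pieces together yields $\log N + O_q(1)$, as claimed.

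There is no real obstacle here: the only subtle point is to remember that $P$ ranges over monic irreducibles in the definition given at the start of Section \ref{S8}, so one must divide $\pi_q(n)$ by $q-1$ before estimating. Everything else is a direct application of Proposition \ref{PNT} combined with the elementary asymptotic $\sum_{n \leq N} \tfrac{1}{n} = \log N + O(1)$.
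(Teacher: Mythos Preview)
Your proof is correct and follows essentially the same route as the paper: group the primes by degree, apply Proposition~\ref{PNT} to get $\frac{1}{n}+O(n^{-1}q^{-n/2})$ for the $n$-th summand, sum the main term to a harmonic number $\log N + O(1)$, and bound the error by a convergent geometric-type series. You are in fact slightly more careful than the paper about the factor $q-1$ distinguishing monic from all irreducibles.
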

\begin{proof}
We have
\begin{eqnarray}
\sum_{\d(P)\leq N} \frac{1}{|P|} &=& 1+\sum_{n=1}^{N}\frac{\pi_{q}(n)}{q^{n}} \nonumber \\
 &=& 1+\sum_{n=1}^{N}\left(\frac{1}{n}+O\left(\frac{q^{-n/2}}{n}\right)\right) \nonumber \\
 &=& \log N +O(1) \nonumber
\end{eqnarray}
\end{proof}
More generally, we have the following analog of Dirichlet's theorem
on primes in arithmetic progressions (with a much better error term
than its integer counterpart, thanks to the Riemann hypothesis for curves
over a finite field):

\begin{proposition}[Dirichlet's theorem for function fields]\label{Dirichlet}
Let $a,r \in \Fq[t]$ be relatively prime, $\deg(m)>0$. Let
$\pi_{q}(N;a,r)$ be the number of irreducible polynomials of degree $N$
in $\Fq[t]$ which are congruent to $r$ (modulo $a$). Then
$\pi_{q}(N;a,r)=(q-1)\frac{1}{\Phi(m)} \frac{q^{N}}{N}+O\left(
\frac{q^{n/2}}{n}\right)$.
\end{proposition}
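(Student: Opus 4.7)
The plan is to carry out the standard analytic number theory approach via L-functions; in the function field setting this is actually cleaner than over $\Z$ because the L-functions literally become polynomials. For each Dirichlet character $\chi$ modulo $a$, extended by $\chi(f)=0$ when $(f,a)\neq 1$, define
$$L(u,\chi)=\sum_{f\text{ monic}}\chi(f)\,u^{\d(f)}=\prod_{P}\bigl(1-\chi(P)u^{\d(P)}\bigr)^{-1}$$
in the variable $u=q^{-s}$. For the principal character $\chi_{0}$ modulo $a$, the Euler product gives $L(u,\chi_{0})=\zeta_{q}(u)\prod_{P\mid a}(1-u^{\d(P)})$, so $L(u,\chi_{0})$ inherits from $\zeta_{q}$ a simple pole at $u=1/q$.

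The key structural fact, specific to the function field setting, is that for non-principal $\chi$, $L(u,\chi)$ is a \emph{polynomial} in $u$ of degree at most $\d(a)-1$. Indeed, for $n\geq\d(a)$, monic polynomials of degree $n$ distribute uniformly among the residue classes modulo $a$ (each class receiving $q^{n-\d(a)}$ of them), so orthogonality of characters forces $\sum_{\d(f)=n,\text{ monic}}\chi(f)=0$. Writing $L(u,\chi)=\prod_{j}(1-\alpha_{j}(\chi)u)$, the Riemann hypothesis for curves over finite fields (due to Weil) yields $|\alpha_{j}(\chi)|\leq q^{1/2}$. This deep input is the main obstacle in principle, but in our context it is cited as a black box.

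Extracting coefficients via the logarithmic derivative,
$$u\,\frac{L'(u,\chi)}{L(u,\chi)}=\sum_{n\geq 1}\left(\sum_{\substack{f\text{ monic}\\ \d(f)=n}}\chi(f)\Lambda(f)\right)u^{n},$$
gives, after matching with the partial-fraction expansion of the right-hand side, the bound $\bigl|\sum_{\d(f)=n,\text{ monic}}\chi(f)\Lambda(f)\bigr|\leq(\d(a)-1)\,q^{n/2}$ for non-principal $\chi$, and the main term $q^{n}+O(1)$ for $\chi=\chi_{0}$ (the $O(1)$ accounting for the finite set of primes dividing $a$). Orthogonality of characters then yields
$$\sum_{\substack{f\text{ monic},\,\d(f)=N\\ f\equiv r\pmod{a}}}\Lambda(f)=\frac{1}{\Phi(a)}\sum_{\chi}\overline{\chi(r)}\sum_{\substack{f\text{ monic}\\ \d(f)=N}}\chi(f)\Lambda(f)=\frac{q^{N}}{\Phi(a)}+O\bigl(q^{N/2}\bigr),$$
with the implied constant absorbing the fixed factor $\d(a)$.

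Contributions from genuine prime powers $P^{k}$ with $k\geq 2$ to the left-hand side amount to $O(q^{N/2})$ by a direct count using the prime number theorem \ref{PNT}, so dividing by $N$ (since $\Lambda(P)=\d(P)=N$ on monic irreducibles of degree $N$) gives the asymptotic for the number of \emph{monic} irreducibles of degree $N$ in the residue class $r$. To recover $\pi_{q}(N;a,r)$, which counts all irreducibles, observe that every irreducible of degree $N$ congruent to $r$ modulo $a$ is uniquely of the form $cP$ with $c\in\Fq\setminus\{0\}$ and $P$ monic irreducible satisfying $P\equiv c^{-1}r\pmod{a}$; summing the monic asymptotic over the $q-1$ residue classes $c^{-1}r$ (each yielding the same leading order $q^{N}/(N\Phi(a))$) produces the factor of $q-1$ in the final formula.
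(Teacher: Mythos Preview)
The paper does not actually supply a proof of this proposition: it is stated as a background fact (implicitly from Rosen \cite{rosen}, which is cited for the prime number theorem just above) and then the paper moves on immediately to Lemmas \ref{divisor} and \ref{tz}. So there is nothing to compare your argument against; you have simply written out the standard proof that the paper chose to quote rather than reproduce.

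Your sketch is correct and is exactly the textbook route: $L(u,\chi)$ is a polynomial for non-principal $\chi$, the Weil bound controls its inverse roots, the logarithmic derivative extracts the weighted prime count, orthogonality isolates the residue class, and a trivial estimate removes higher prime powers. Your handling of the factor $q-1$ (passing from monic to all irreducibles by summing over the $q-1$ scalar multiples) is also correct. One small remark: the implied constant in your $O(q^{N/2})$ genuinely depends on $\d(a)$, which you acknowledge; the paper's statement suppresses this dependence (and contains several typos: $m$ should be $a$, and $n$ should be $N$), so your version is in fact more precise than what is written.
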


We will need the following two lemmas in our construction of the
function $\tau$ in the correlation condition.

\begin{lemma}[Divisor bound] \label{divisor}
Let $f \in \Fq[t]$. Suppose $\d(f)=N$. Then $d(f)$, the number of
divisors of $f$, satisfies $d(f) \leq q^{O_{q}\left( \frac{N}{\log
N}\right)}$.
\end{lemma}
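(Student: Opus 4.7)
My plan is to mimic the classical elementary proof of the divisor bound $d(n) \leq n^{O(1/\log\log n)}$ in $\Z$, adapted to polynomials. Writing $f = c\,P_1^{\alpha_1}\cdots P_m^{\alpha_m}$, we have the multiplicative formulas $d(f) = \prod_{i=1}^{m}(\alpha_i+1)$ and $N = \d(f) = \sum_{i=1}^m \alpha_i \d(P_i)$. The key device is to split the product into "small prime" and "large prime" contributions at some threshold degree $s$ (to be chosen at the end), handle each part separately, and optimize in $s$.

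For the large primes, i.e.\ those with $\d(P_i) > s$, I use the elementary inequality $\alpha+1 \leq 2^\alpha = q^{\alpha \log_q 2}$ for $\alpha \geq 1$. Since $\d(P_i) > s$ gives $\alpha_i \leq \alpha_i \d(P_i)/s$, I obtain
\[
\prod_{\d(P_i)>s}(\alpha_i+1) \;\leq\; q^{(\log_q 2)\sum_{\d(P_i)>s}\alpha_i} \;\leq\; q^{(\log_q 2)\, N/s}.
\]
For the small primes, i.e.\ those with $\d(P_i) \leq s$, I use the trivial bound $\alpha_i+1 \leq N+1$ combined with the fact that the number of distinct monic irreducibles of degree at most $s$ in $\Fq[t]$ is at most $\sum_{d \leq s}\pi_q(d) = O_q(q^s/s)$ by Proposition \ref{PNT}. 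Hence
\[
\prod_{\d(P_i)\leq s}(\alpha_i+1) \;\leq\; (N+1)^{O_q(q^s/s)}.
\]

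Combining and taking $\log_q$, we get $\log_q d(f) \;\ll_q\; N/s + (q^s/s)\log_q(N+1)$. The two terms are balanced by choosing $s$ so that $q^s \asymp N/\log_q N$, i.e.\ $s = \lfloor \log_q N - \log_q \log_q N \rfloor$, for which both terms become $O_q(N/\log_q N) = O_q(N/\log N)$. This yields $d(f) \leq q^{O_q(N/\log N)}$, as required. There is no real obstacle here: the only thing to be mildly careful about is that PNT gives the \emph{total} number of small-degree irreducibles (which is what we need, since we are bounding the number of distinct small-prime divisors of $f$), and that the hidden constants are allowed to depend on $q$, absorbing the factors of $\log q$ introduced when converting between $\log$ and $\log_q$.
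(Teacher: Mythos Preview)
Your proof is correct and follows essentially the same strategy as the paper: split the product $\prod_i(\alpha_i+1)$ into contributions from small-degree and large-degree irreducible factors at a threshold, bound each part separately, and optimize the threshold. The paper packages this via the ratio $d(f)/|f|^{\epsilon}$ with $\epsilon\sim 1/\log N$ (so the threshold is $\d(P)\gtrless 1/\epsilon$), whereas you work with the product directly and threshold $s\sim\log_q N$; these correspond under $s\leftrightarrow 1/\epsilon$, and your use of the prime number theorem to count small irreducibles is a mild sharpening of the paper's cruder bound of $q^{1/\epsilon}$ for that count, but not essential to the argument.
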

\begin{proof}If $f$ has the factorization
$f=c\prod_{i=1}^{m}P_{i}^{\alpha_{i}}$, then
$d(f)=\prod_{i=1}^{m}(\alpha_{i}+1)$. Therefore,
$\frac{d(f)}{|f|^{\epsilon}}=\prod_{i=1}^{m}
\frac{\alpha_{i}+1}{|P_{i}|^{\epsilon \alpha}}$, where $\epsilon$ is
to be chosen later, possibly depending on $f$.

Note that, if $\d(P_{i}) \geq 1/\epsilon$, then
$$\frac{\alpha_{i}+1}{|P_{i}|^{\epsilon \alpha_{i}}} \leq
\frac{\alpha_{i}+1}{q^{\alpha_{i}}} \leq 1$$ If $\d(P_{i}) <
1/\epsilon$, then $$\frac{\alpha_{i}+1}{|P_{i}|^{\epsilon
\alpha_{i}}} \leq \frac{\alpha_{i}+1}{q^{\epsilon
\alpha_{i}}} \leq \frac{q^{2\sqrt{\alpha_{i}}}}{q^{\epsilon
\alpha_{i}}} \leq q^{1/\epsilon}$$
Since the second case can occur for at most $q^{1/\epsilon}$ values of $P_{i}$, we have
$\frac{d(f)}{q^{N
\epsilon}} \leq (q^{1/\epsilon})^{q^{1/\epsilon}}=q^{N \epsilon +
\frac{1}{\epsilon}q^{1/\epsilon}}$.
Thus for every $\epsilon>0$,
$$d(f) \leq q^{N \epsilon + \frac{1}{\epsilon}q^{1/\epsilon}}$$
for every $\epsilon >0$. If we choose $\epsilon=1/\log N$, then we have $d(f)\leq q^{O_{q}\left(
\frac{N}{\log N}\right)}$, as required.
\end{proof}
\begin{lemma}\label{tz}Let $\mathcal{S}$ be a finite set of
irreducible polynomials in $\Fq[t]$, then for every $K$,
$$\exp \left( \sum_{P \in \mathcal{S}} \frac{1}{|P|} \right)=O_{K} \left( \sum_{P \in \mathcal{S}} \frac{\log^{K}|P|}{|P|} \right) $$
\end{lemma}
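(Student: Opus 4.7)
The plan is to leverage the function-field prime number theorem (Proposition \ref{PNT}) and its Mertens-type consequence (Corollary \ref{PNT2}) to control both sides. Let $N$ denote the maximum degree of any prime appearing in $\mathcal{S}$.

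First, I would bound the left-hand side from above. Since $\mathcal{S} \subseteq \{P : \deg P \leq N\}$, Corollary \ref{PNT2} gives
$$\sum_{P \in \mathcal{S}}\frac{1}{|P|} \;\leq\; \sum_{\deg P \leq N}\frac{1}{|P|} \;=\; \log N + O(1),$$
so $\exp\bigl(\sum_{P \in \mathcal{S}} 1/|P|\bigr) \ll N$.

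Second, I would estimate the right-hand side from below. The ``model case'' is when $\mathcal{S}$ is the full set of primes of degree at most $N$: using Proposition \ref{PNT} to replace $\pi_q(n)$ by $(q-1)q^n/n + O(q^{n/2}/n)$,
$$\sum_{\deg P \leq N}\frac{\log^K |P|}{|P|} \;=\; (\log q)^K \sum_{n=1}^{N}\frac{n^K\,\pi_q(n)}{q^n} \;\asymp_K\; (\log q)^K \sum_{n=1}^{N} n^{K-1} \;\asymp_K\; N^K.$$
Since $K \geq 1$, this is at least a constant multiple of $N$, which matches the upper bound on the left-hand side; this settles the ``full-range'' case.

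The main obstacle is reducing a general finite $\mathcal{S}$ to this model case, since a sparse $\mathcal{S}$ could in principle contribute much less to $\sum (\log|P|)^K/|P|$ than to $\sum 1/|P|$. My plan here is a dyadic decomposition by degree: write $\mathcal{S}_j = \{P \in \mathcal{S} : 2^{j-1} \leq \deg P < 2^j\}$, set $M_j = \sum_{P \in \mathcal{S}_j} 1/|P|$, and observe that on $\mathcal{S}_j$ one has $\log|P| \asymp 2^j \log q$, so
$$\sum_{P \in \mathcal{S}_j}\frac{\log^K |P|}{|P|} \;\asymp_K\; (2^j \log q)^K \, M_j.$$
Then, using Proposition \ref{PNT} to bound each $M_j$ by $O(1)$ per dyadic level (since $M_j \leq \sum_{2^{j-1} \leq n < 2^j} 1/n = O(1)$), I would factor $\exp(\sum_j M_j) = \prod_j e^{M_j}$, dominate each factor by a constant depending only on the bound on $M_j$, and then telescope across levels, paying attention to whether each level actually contains primes in $\mathcal{S}$. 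The hard part will be matching the product $\prod_j (1 + O(M_j))$ against $\sum_j (2^j)^K M_j$ with a constant depending only on $K$, which should follow by the elementary inequality $\prod_j (1+x_j) \leq \exp(\sum_j x_j)$ combined with the fact that the inner sum over the nonempty dyadic levels is dominated by its largest term, weighted by the corresponding $(2^j)^K$.
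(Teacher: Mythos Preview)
Your final paragraph is where the argument breaks down. You correctly identify that the sparse case is the real difficulty, but the sketch you give is circular: you write $\exp(\sum_j M_j)=\prod_j e^{M_j}$, bound each factor by $1+O(M_j)$ using $M_j=O(1)$, and then propose to control $\prod_j(1+O(M_j))$ via $\prod(1+x_j)\le\exp(\sum x_j)$ --- which just returns you to $\exp(\sum_j M_j)$. The remark that the sum $\sum_j (2^j)^K M_j$ is dominated by its largest term does not help either: you need a \emph{lower} bound on that sum, and the top dyadic level present in $\mathcal{S}$ may consist of a single prime, so that the corresponding $M_j$ is exponentially small and your upper bound $\exp(\sum 1/|P|)\ll N$ from the first step has nothing useful to be compared against. (This same example --- $\mathcal{S}$ a single prime of large degree --- shows the lemma as literally stated needs an additive constant on the right; this is harmless for the application and the paper's own argument carries it as a ``$1+$''.)

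The paper's argument avoids the dyadic bookkeeping entirely. One expands $\exp\bigl(K\sum_{P\in\mathcal{S}}1/|P|\bigr)$ as a power series; in the $n$-th term $\bigl(\sum_P 1/|P|\bigr)^n$ one singles out the prime $P$ of maximal degree among the $n$ chosen (paying a factor $n$), after which the remaining $n-1$ primes range only over $\{P'\in\mathcal{S}:\deg P'\le\deg P\}$, whose reciprocal sum is $\ll\log\deg P$ by Corollary~\ref{PNT2}. Resumming over $n$ gives $1+O_K\bigl(\sum_{P\in\mathcal{S}}(\deg P)^{O(K)}/|P|\bigr)$, which is the desired bound. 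The idea missing from your proposal is precisely this ``extract the maximum'' trick: it converts the exponential directly into a weighted sum over the primes of $\mathcal{S}$, with the weight at each $P$ controlled by Mertens up to $\deg P$, rather than trying to reconstruct the sum from dyadic scales whose individual contributions you cannot bound from below.
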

This bound is perhaps surprising, since it is uniform over all
finite subset of the irreducible polynomials.
\begin{proof}
We have
\begin{eqnarray}
 \exp \left( K \sum_{P \in \mathcal{S}} \frac{1}{|P|} \right)&=& 1+\sum_{n=1}^{\infty} \frac{K^{n}}{n!} \sum_{P_1, \ldots, P_{n}\in \mathcal{S}} \frac{1}{|P_1 \cdots P_{n}|} \nonumber \\
&\leq& 1+\sum_{n=1}^{\infty} \frac{K^{n}}{(n-1)!} \sum_{P \in
\mathcal{S}} \sum_{ \substack{P_1, \ldots, P_{n-1} \in
\mathcal{S},\\
\d(P_{i}) \leq \d(P)}}
\frac{1}{|PP_1 \cdots P_{n-1}|} \nonumber \\
&=& 1+ \sum_{P\in \mathcal{S}} \frac{1}{|P|} \sum_{n=1}^{\infty}
\frac{K^{n}}{(n-1)!} \left( \sum_{\substack{P' \in \mathcal{S},\\
\d(P') \leq \d(P)}}
\frac{1}{|P'|} \right)^{n-1} \nonumber
\end{eqnarray}
By Corollary \ref{PNT2}, we have that $\sum_{\substack{P' \in \mathcal{S},\\
\d(P') \leq \d(P)}} \frac{1}{|P'|} \ll \log \d(P)$. Hence
\begin{eqnarray}
\exp \left( \sum_{P \in \mathcal{S}} \frac{1}{|P|} \right)
&\ll& \sum_{P\in \mathcal{S}} \frac{1}{|P|} \sum_{n=1}^{\infty}
\frac{K^{n}}{(n-1)!} \log^{n-1}\d(P) \nonumber \\
&\ll_{K}& \sum_{P \in \mathcal{S}} \frac{1}{|P|} \exp(K \log(\d(P))) \nonumber \\
&\ll_{K}& \sum_{P \in \mathcal{S}} \frac{\log^{K}|P|}{|P|} \nonumber
\end{eqnarray}
as required.
\end{proof}

In our proof of the Goldston-Y{\i}ld{\i}r{\i}m estimates (Propositions
\ref{gy}, \ref{gy2}, \ref{gy3}) in the next sections, we will be concerned with Euler
products in several variables, i.e of the form $\prod_{P}\left( 1-\sum_{j=1}^{n}
\frac{c_{P,j}}{|P|^{1+s_{j}}} \right)$, as $\Re s_{j}>0$ and
$s_{j}\rightarrow 0$ uniformly. The following lemma gives an
asymptotic formula for such Euler products.

\begin{lemma}\label{euler}
Let $P$ range over monic irreducible polynomials in $\Fq[t]$. For every $P$ let $c_{P,1}, \ldots, c_{P,n}$ be real numbers such that $|c_{P,j}| \leq 1$ and $c_{P,j}=c_{j}$ for $P$ outside a finite set $\mathcal{S}$. Let $s_{1}, \ldots, s_{n} \in \C$ be such that $\Re s_{j}>0$ and $s_{j}=o(1)$ uniformly. Then
$$ \prod_{P}\left( 1-\sum_{j=1}^{n} \frac{c_{P,j}}{|P|^{1+s_{j}}} \right)=G(1+o_{n}(1))\prod_{P \in \mathcal{S}}\left( 1+O_{n}\left( \frac{1}{|P|}\right) \right) \prod_{j=1}^{n}\left( s_{j} \log q \right)^{c_{j}}$$
where $G=\prod_{P}
\left(1-\frac{\sum_{j=1}^{n}c_{P,j}}{|P|}\right)\left(1-\frac{1}{|P|}
\right)^{-(c_1+\cdots+c_{n})}$
\end{lemma}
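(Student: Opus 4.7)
The plan is to isolate the pole contributions by comparing with the function-field zeta function. Since $\prod_P(1-|P|^{-(1+s)}) = 1/\zeta_q(1+s) = 1-q^{-s}$, I would write
$$\prod_P\Bigl(1-\sum_{j=1}^n \frac{c_{P,j}}{|P|^{1+s_j}}\Bigr) = \prod_{j=1}^n (1-q^{-s_j})^{c_j} \cdot \prod_P F_P,$$
where the compensating factor is
$$F_P = \Bigl(1-\sum_{j=1}^n \frac{c_{P,j}}{|P|^{1+s_j}}\Bigr)\prod_{j=1}^n \Bigl(1-\frac{1}{|P|^{1+s_j}}\Bigr)^{-c_j}.$$
The hope is that the first factor captures the singular behaviour, producing the $\prod_j(s_j\log q)^{c_j}$, while the second is an absolutely convergent Euler product whose value at $s_j = 0$ is essentially $G$.

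For the singular piece, I would use $1-q^{-s_j} = 1 - e^{-s_j\log q} = s_j\log q + O(s_j^2)$, so that $(1-q^{-s_j})^{c_j} = (s_j\log q)^{c_j}(1+o(1))$ as $s_j\to 0$, uniformly for $|c_j|\le 1$. Multiplying over $j$ yields $\prod_j(s_j\log q)^{c_j}(1+o_n(1))$, which accounts for the explicit $(s_j\log q)^{c_j}$ factors in the statement.

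For the compensating product, I would take logarithms and expand $\log(1-u) = -u + O(|u|^2)$ to first order in $|P|^{-(1+s_j)}$. The crucial observation is that the linear terms from the two halves of $F_P$ combine to give $\sum_j(c_j-c_{P,j})/|P|^{1+s_j}$, which \emph{vanishes identically} whenever $P\notin \mathcal{S}$; the quadratic and higher terms contribute $O_n(|P|^{-2})$ uniformly in $s_j$ (using $\Re s_j\geq 0$ to control $|P|^{-(1+s_j)}$ by $|P|^{-1}$). Hence $\prod_{P\notin\mathcal{S}} F_P$ is an absolutely and uniformly convergent Euler product whose limit as $s_j\to 0$ is $\prod_{P\notin\mathcal{S}} G_P$, where $G_P$ denotes the $P$-factor of $G$. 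For the finitely many $P\in\mathcal{S}$, applying the analogous expansion to both $\log F_P$ and $\log G_P$ gives $F_P/G_P = 1 + O_n(|P|^{-1})$ uniformly in small $s_j$, with pointwise convergence $F_P/G_P \to 1$ as $s_j\to 0$; combining these with the tail yields $\prod_P F_P = G\cdot(1+o_n(1))\prod_{P\in\mathcal{S}}(1+O_n(|P|^{-1}))$, and multiplication with the singular piece produces the claimed asymptotic.

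The main obstacle is the first-order cancellation that renders $\prod_{P\notin\mathcal{S}}F_P$ absolutely convergent: this is the algebraic reason for the $(1-1/|P|)^{-c}$ normalization in the definition of $G$, and without it the Euler product defining $G$ would diverge. Once that cancellation is in hand, everything reduces to routine asymptotic bookkeeping with the zeta function, together with the finite-set estimate for $P\in\mathcal{S}$.
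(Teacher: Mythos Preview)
Your approach is correct and is essentially the same as the paper's: both factor out $\prod_j \zeta_q(1+s_j)^{-c_j}$ to isolate the singular behaviour, then show that the compensated Euler product $\prod_P F_P$ is absolutely convergent (via the first-order cancellation for $P\notin\mathcal{S}$ and the crude $1+O_n(|P|^{-1})$ bound for $P\in\mathcal{S}$) and tends to $G$. The only cosmetic difference is that the paper phrases the estimate for $P\notin\mathcal{S}$ as $F_P = G_P\bigl(1+o_n(\log|P|/|P|^2)\bigr)$ obtained by differentiating in $s_j$, whereas you reach the equivalent conclusion by expanding $\log F_P$ directly.
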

Note that the $O_{n}$ and $o(1)$ depends only on $n$ and the rate $s_{1}, \ldots, s_{n} \rightarrow 0$ and not on the exceptional set $\mathcal{S}$.
\begin{proof}
Note that $\left(1-\frac{\sum_{j=1}^{n}c_{P,j}}{|P|}\right)\left(1-\frac{1}{|P|}
\right)^{-(c_1+\cdots+c_{n})}=1+ O_{n}\left( \frac{1}{|P|}\right)$ if $P \in \mathcal{S}$ and $1+ O_{n}\left( \frac{1}{|P|^2}\right)$ if $P \not \in \mathcal{S}$. In particular the product defining $G$ converges.

Let us now look at the expression
$$\left(1-\sum_{j=1}^{n} \frac{c_{P,j}}{|P|^{1+s_{j}}} \right) \prod_{j=1}^{n}\left(1-\frac{1}{|P|^{1+s_{j}}} \right)^{-c_{j}}$$
For $P$ outside of $\mathcal{S}$, an easy calculation (by calculating the partial derivative of the expression with respect to each $s_{j}$) shows that it is equal to $$\left(1-\frac{\sum_{j=1}^{n}c_{P,j}}{|P|}\right)\left(1-\frac{1}{|P|}
\right)^{-(c_1+\cdots+c_{n})}\left( 1+o_{n}\left( \frac{\log |P|}{|P|^2}\right) \right)$$
For $P \in \mathcal{S}$, we just bound it crudely by $1+O_{n}\left( \frac{1}{|P|}\right)$, which is equal to
$$\left(1-\frac{\sum_{j=1}^{n}c_{P,j}}{|P|}\right)\left(1-\frac{1}{|P|}
\right)^{-(c_1+\cdots+c_{n})}\left( 1+O_{n}\left( \frac{1}{|P|}\right) \right)$$
Multiplying these estimates over all $P$, (and noting that $\prod_{P} \left( 1+o_{n}\left( \frac{\log |P|}{|P|^2}\right) \right) =1+o_{n}(1)$), we have
$$ \prod_{P}\left( 1-\sum_{j=1}^{n} \frac{c_{P,j}}{|P|^{1+s_{j}}} \right) = G(1+o_{n}(1)) \prod_{j=1}^{n}\zeta_{q}(1+s_{j})^{-c_{j}} \prod_{P \in \mathcal{S}}\left(1+O_{n}\left( \frac{1}{|P|}\right) \right)$$
Writing out $\zeta_{q}(1+s_{j})^{-1}=1-q^{-s_{j}}=(1+o(1))s_{j} \log q$, we have the desired estimate.
\end{proof}

\section{A pseudorandom measure that majorizes the irreducible polynomials} \label{S9}
In this section we prove Theorem \ref{FFGY} by constructing a pseudorandom measure $\nu$. The proof of its pseudorandomness is however deferred to the next two sections. Recall that our task is to find a pseudorandom
measure $\nu$ such that $\nu$ majorizes a function $\phi$ which is
supported on $\A$ and such that $\E(\phi|\FqN) \geq \delta$, where
$\overline{d}_{\P}(\A)>0$, and $\delta$ is a positive constant
depending on $\overline{d}_{\P}(\A)$ alone. Throughout this whole
section and the next two, polynomials denoted by the letter $d$
(such as $d, d'$ or $d_{i}$) will stand for monic polynomials.

Let's fix once and for all
\begin{itemize}
    \item $R=\alpha N$, where $\alpha$ is a small constant depending only on $k$.
    \item $w=w(N)$, a function tending sufficiently slowly to infinity. We may take
$w(N) \ll \log N$.
    \item $W=\prod_{\d(P)<\omega}P$. We have that $W(t)=t^{q^{w}}-t$, so that\footnote{The introduction of $W$,
alluded to earlier as the $W$-trick, is meant to absorb small
irreducible polynomials arising in the the linear
forms condition. Except for this technical reason, for the most part we can go through the arguments pretending that $W=1$ without losing
the general idea.} $\d(W)\ll N$. We will see that eventually we can take $w$ to be a sufficiently large number, hence $W$ to be a sufficiently large polynomial.
    \item $\chi: \mathbf{R} \rightarrow \mathbf{R}$ a smooth function\footnote{Goldston-Y{\i}ld{\i}r{\i}m used a truncated sum corresponding
to $\chi(x)=\max(1-|x|, 0)$. As observed by Tao
\cite{tao-gy}, the use of a smooth function allows us to perform
Fourier analysis.} supported on [-1,1] such that $\chi(0)>0$ and
$\int_{0}^{\infty}(\chi'(x))^2 dx=1$.

    \item $\Lambda_{R}(f)= \sum_{\substack{d | f, \\ \d(d)<R}} \mu(d) \chi\left( \frac{\d(d)}{R}
\right)$, the Goldston-Y{\i}ld{\i}r{\i}m divisor sum.
    \item $\nu(f)= \nu_{b}(f)=R \frac{\Phi(W)}{|W|}
\Lambda_R(Wf+b)^2$ for some appropriate $b$ such that $0<\d(b)<\d(W), \gcd(b,W)=1$ to be chosen later.
\end{itemize}
\begin{proof}[Proof of Theorem \ref{FFGY} under the assumption that $\nu$ is pseudorandom]
Notice that if $f$ is irreducible and $\d(f) \geq R$ then
$\Lambda_{R}(f)=\chi(0)$. For $f \in \GN$ let
$$\phi(f)=\phi_{b}(f)= \left\{
     \begin{array}{ll}
       \chi(0)^2 \frac{\Phi(W)}{|W|} R, & \hbox{if $Wf+b$ is irreducible and $\d(Wf+b) \geq R$;} \\
       0, & \hbox{otherwise.}
     \end{array}
   \right.$$
Then clearly $0 \leq \phi \leq \nu$ and $\|\phi\|_{\infty} \ll N$.
Thus it suffices to find $b$ such that $\sum_{f \in
\GN}\phi_{b}(f)1_{Wf+b \in \A}\geq \delta q^{N}$ for some constant
$\delta>0$.

Let us take the sum $\sum_{b} \sum_{f \in \GN}\phi_{b}(f)1_{Wf+b \in
\A}$ over all $b$ such that $\d(b)<\d(W), \gcd(b,W)=1$. It is easy
to see that it is equal to $$\sharp \{h \in \A,
R \leq \d(h) \leq N+\d(W) \} \frac{\Phi(W)}{|W|}R$$ By the prime number theorem in
$\Fq[t]$ (Theorem \ref{PNT}), $\sharp \{h \in \P, R \leq \d(h) \leq
N+\d(W) \}=(q-1)\frac{q^{N}|W|}{N+\d(W)}(1+o(1))$

Since $N+\d(W)$ increases at most linearly in $N$, and since
$\overline{d}_{\P}(\A)>0$, we conclude that there is a constant
$\delta>0$ depending only on $\A$ such that $\sum_{b} \sum_{f \in
\GN}\phi_{b}(f)1_{Wf+b \in \A} \geq \frac{\Phi(W)}{|W|}\delta q^{N}$
infinitely often\footnote{This is always true if the limit $d_{\P}(\A)=\lim_{N \rightarrow \infty} \frac{ \# \{ f \in \A, \d(f) < N \} }{\# \{ f \in \P, \d(f) < N \}}$ exists. If not, then this can be false if we allow $W$ to tend to infinity. However, as already mentioned earlier, we can eventually take $W$ to be a constant, so that the argument remains valid.}. Thus, for infinitely many $N$, we can find $b$
such that $\sum_{f \in \GN}\phi_{b}(f)1_{Wf+b \in \A}\geq \delta
q^{N}$, as required.
\end{proof} From now on let us assume without loss of
generality that $b=1$. (Note that if $\A=\mathcal{P}$ we can always
take $b=1$, thanks to Dirichlet's theorem in $\Fq[t]$ (Theorem
\ref{Dirichlet})).

The only thing missing from the conclusions of Theorem \ref{FFGY} is
to check that $\nu$ is indeed a pseudorandom measure,
i.e. it satisfies the linear forms condition and the correlation
condition. This will be done in the next two sections. In order to
do so, we will need estimates on sums of the form $\sum
\Lambda_{R}(\psi_1)\ldots \Lambda_{R}(\psi_{n})$ where the $\psi_{i}$
are linear forms. The following proposition shows us how to deal
with sums of this kind.

\begin{proposition}[Goldston-Y{\i}ld{\i}r{\i}m estimates]\label{gy}
Given $J_1, \ldots, J_{n} \in \Fq[t]$ not necessarily distinct. Let
$r$ be the number of distinct elements in $\{J_1, \ldots, J_{n}\}$.
Also, for every monic, irreducible $P\in \Fq[t]$, let $\alpha_{P}$
be the number of distinct residue classes modulo $P$ occupied by
$J_1, \ldots, J_{n}$. Put $\Delta=\Delta(J_1,\ldots,J_{n})=\prod
(J_{i}-J_{i'})$, where the product is taken over all couples
$(J_{i},J_{i'})$ such that $J_{i} \neq J_{i'}$. Then as
$N\rightarrow \infty$,
\begin{equation}\label{gye}
\sum_{f \in \GN}
\Lambda_{R}(f+J_1)\cdots\Lambda_{R}(f+J_{n})=CGH(1+o_{n}(1))q^{N}\left( \frac{\log
q}{R} \right)^{r}
\end{equation}
where $C$ is a computable constant (not depending on $N, J_1,
\ldots, J_{n}$ but only on the multiplicities of the $J_{i}$ and
$\chi$), $G$ is the ``arithmetic factor'' $\prod_{P} \left(
1-\frac{1}{|P|} \right)^{-r}\left(1-\frac{\alpha_{P}}{|P|} \right)$, and $H=\prod_{P|\Delta}\left(1+O_{n}\left(\frac{1}{|P|} \right) \right)$.
\end{proposition}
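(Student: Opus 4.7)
My strategy is to adapt Tao's smoothed version of the Goldston--Y{\i}ld{\i}r{\i}m analysis to $\Fq[t]$, proceeding in four steps: (i) expand the product and use the Chinese Remainder Theorem in the $f$-sum; (ii) apply Fourier inversion to $\chi$; (iii) analyse the resulting Euler product via Lemma \ref{euler}; (iv) collapse the remaining contour integrals.

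Expanding each $\Lambda_R$ and exchanging summations gives
$$\sum_{f\in\GN}\prod_{i=1}^{n}\Lambda_R(f+J_i)=\sum_{d_1,\ldots,d_n}\prod_i\mu(d_i)\chi\!\left(\frac{\d(d_i)}{R}\right)\#\{f\in\GN:d_i\mid f+J_i\ \forall i\}.$$
The M\"obius factors force each $d_i$ squarefree of degree $<R$, so $\d([d_1,\ldots,d_n])<nR\ll N$, and the Chinese Remainder Theorem then evaluates the inner count to $q^N/|[d_1,\ldots,d_n]|$ whenever the congruences $f\equiv -J_i\pmod{d_i}$ are simultaneously solvable (and $0$ otherwise). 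Solvability is a local condition: at every prime $P$ the set $S_P=\{i:P\mid d_i\}$ must lie in a single residue class of $\{J_1,\ldots,J_n\}$ modulo $P$.

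Inserting the Fourier representation
$$\chi\!\left(\frac{\d(d)}{R}\right)=\int_{\R}\hat\chi(\xi)\,|d|^{s(\xi)}\,d\xi,\qquad s(\xi)=\frac{2\pi i\xi}{R\log q},$$
(with the contour slightly shifted into $\Re s_i>0$ for absolute convergence) and exploiting multiplicativity, the divisor sum becomes
$$q^N\int\!\!\cdots\!\!\int\prod_i\hat\chi(\xi_i)\,\prod_P L_P(s)\,d\xi_1\cdots d\xi_n,$$
with local factor $L_P(s)=\sum_{S\text{ compat.\ at }P}(-1)^{|S|}|P|^{\sum_{i\in S}s_i-\mathbf{1}_{S\ne\emptyset}}$. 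For primes $P\nmid\Delta$ compatibility is exactly the equality partition of the $J_i$'s, with $r$ classes; writing $\sigma_j=\sum_{i:J_i=J^{(j)}}s_i$, a short Taylor computation yields
$$L_P(s)=1-\sum_{j=1}^{r}|P|^{-1+\sigma_j}+O_n(|P|^{-2})\quad(P\nmid\Delta),$$
while for the finitely many $P\mid\Delta$ the local factor differs from this ``main'' form by a bounded multiplicative correction $1+O_n(|P|^{-1})$, whose product over $P\mid\Delta$ is exactly the factor $H$ of the statement. Applying Lemma \ref{euler} in the $r$ variables $\sigma_j$ (with $c_j=1$) then extracts $\prod_P L_P(s)=G\cdot H\cdot(1+o_n(1))\prod_{j=1}^{r}(-\sigma_j\log q)$.

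Reinserting this and recognising $\sigma_j\log q=\sum_{i:J_i=J^{(j)}}2\pi i\xi_i/R$, the $\xi$-integrals decouple by class: each class of multiplicity $m_j$ contributes a constant $c_{m_j}\cdot(\log q/R)$, where $c_{m_j}$ is an integral of derivatives of $\chi$ depending only on $\chi$ and $m_j$ (and is pinned down for $m_j=2$ by the normalization $\int_0^\infty(\chi'(x))^2\,dx=1$). Multiplying the $r$ class contributions produces the announced factor $C(\log q/R)^r$ with $C=\prod_j c_{m_j}$. I expect the main obstacle to be the book-keeping behind step (iii): one must verify that the regrouping of the $s_i$'s into the class sums $\sigma_j$ puts $\prod_P L_P$ into the exact hypotheses of Lemma \ref{euler} uniformly in $N$ and in $J_1,\ldots,J_n$, and that the bad-prime contributions assemble into $H$ with a constant in $O_n(\cdot)$ independent of $N$. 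A secondary technicality is justifying the contour shifts back to the imaginary axis; the Schwartz decay of $\hat\chi$ (from smoothness and compact support of $\chi$) provides the $o_n(1)$ control on the tails.
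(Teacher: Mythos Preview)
Your overall architecture (expand, CRT, Fourier, Euler product, integrate) matches the paper's, but step (iii) contains a real error that breaks the argument whenever some $J_i$ repeats. Your ``short Taylor computation'' claiming
\[
L_P(s)=1-\sum_{j=1}^{r}|P|^{-1+\sigma_j}+O_n(|P|^{-2})\qquad(P\nmid\Delta)
\]
is false for classes of size $\geq 2$. Take a single class of size $2$: the exact local factor is $1-|P|^{s_1-1}-|P|^{s_2-1}+|P|^{s_1+s_2-1}$, whereas your formula gives $1-|P|^{s_1+s_2-1}$. The discrepancy is $|P|^{-1}\bigl(-|P|^{s_1}-|P|^{s_2}+2|P|^{s_1+s_2}\bigr)$, which for $|s_i|\asymp 1/R$ is of order $\log|P|/(R|P|)$, \emph{not} $O(|P|^{-2})$; summed over $P$ this diverges. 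Consequently the conclusion $\prod_P L_P(s)=GH(1+o_n(1))\prod_{j=1}^{r}(-\sigma_j\log q)$ is wrong: it would eventually produce the wrong constant $C$ (and hence a false asymptotic), since for a class of size $2$ the correct contribution is $\frac{s_1s_2}{s_1+s_2}\log q$, not $(s_1+s_2)\log q$.

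The fix --- and this is what the paper does --- is to feed Lemma~\ref{euler} the \emph{full} list of exponents, one variable for each nonempty $I\subset\{1,\ldots,n\}$ lying in a single equality class, with coefficient $(-1)^{|I|+1}$. Lemma~\ref{euler} then returns
\[
GH(1+o_n(1))\prod_{I}\Bigl(\sum_{i\in I}s_i\,\log q\Bigr)^{(-1)^{|I|+1}},
\]
the product ranging over such $I$; the net exponent per class is $\sum_{\emptyset\ne I}(-1)^{|I|+1}=1$, recovering the factor $(\log q/R)^r$. The subsequent $\xi$-integral still decouples by class, but each class of size $a$ now contributes the genuinely $a$-variable integral $C_a$ of the paper's formula~(\ref{c}) rather than a linear expression in $\sigma_j$. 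Once you make this correction, your proof coincides with the paper's (your contour-shift device plays the same role as the paper's choice $q^{-(1+it)x}$ with built-in real part $1/R$).
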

\begin{remarks}This Proposition illustrates how the Goldston-Y{\i}ld{\i}r{\i}m
method works. We will not apply this Proposition directly, (since we
will be incorporating the $W$-trick), but rather its variants
(Propositions \ref{gy2} and \ref{gy3}), for which only minor
modifications are needed.
\end{remarks}
\begin{proof}
Writing out the definition of $\Lambda_{R}$, we see that the left
hand side of (\ref{gye}) is
\begin{equation} \label{eq1}
\sum_{d_1,\ldots,d_{n} \in \GN} \left( \prod_{i=1}^{n} \mu (d_{i})
\chi \left( \frac{\d(d_{i})}{R} \right) \right) \sum_{f \in
\GN}1_{d_{i}|f+J_{i} \forall i=1, \ldots, n } \end{equation} Note
that since $\chi$ is supported on $[-1,1]$, the summation over
$d_1,\ldots,d_{n} \in \GN$ is the same as the summation over
$d_1,\ldots,d_{n} \in \mathbf{G}_{R}$. Also, because of the appearance of the function $\mu$, only squarefree $d_{i}$ are involved.

Suppose $R$ is sufficiently small compared to $N$, say $nR<N$. Then for every $d_1,\ldots,d_{n}
\in \mathbf{G}_{R}$, we have $|[d_1,\ldots, d_{n}]|<q^{N}$.
Therefore
\begin{equation} \label{cc}
\sum_{f \in \GN}1_{d_{i}|f+J_{i} \forall i=1, \ldots, n }=\frac{g(d_1,\ldots,d_{n})}{|[d_1,\ldots,d_{n}]|}q^{N}
\end{equation}
where $g(d_1,\ldots,d_{n})$ is the number of solutions in
$\mathbf{G}_{\d([d_1,\ldots,d_{n}])}$ of the system of congruences
$f+J_{i} \equiv 0 \pmod{d_{i}}$ for every $i=1,\ldots,k$. We will
come back to the analysis of $g(d_1,\ldots,d_{n})$ later.

We can now rewrite the expression (\ref{eq1}) as
\begin{equation} \label{eq2}
q^{N} \sum_{d_1,\ldots,d_{n}}\frac{g(d_1,\ldots,d_{n})}{|[d_1,\ldots,d_{n}]|}
\prod_{i=1}^{k}\mu(d_{i}) \chi\left( \frac{\d(d_{i})}{R}\right)
\end{equation}

Write\footnote{The appearance of $q$ here is for mere aesthetic
reasons.} $\chi(x)=\int_{-\infty}^{\infty}q^{-(1+it)x}\psi(t)dt$ (in
other words, $\psi(t)= \log{q} \widehat{q^{x} \chi(x)}(t \log q)$,
where $\psi$ is rapidly decreasing, i.e.
$\psi(t)=O_{A}((1+|t|)^{-A})$ for every $A$. Thus for every $m=1,
\ldots, n$,
\begin{eqnarray}
\chi\left( \frac{\d(d_{m})}{R} \right) &=& \int_{-\infty}^{\infty}q^{-\frac{1+it_{m}}{R}\d(d_{m})}\psi(t_{m}) dt_{m} \nonumber \\
&=& \int_{-\infty}^{\infty}|d_{m}|^{-\frac{1+it_{m}}{R}}\psi(t_{m}) dt_{m} \nonumber \\
&=& \int_{-\sqrt{R}}^{\sqrt{R}}|d_{m}|^{-\frac{1+it_{m}}{R}}\psi(t_{m}) dt_{m} + O_{A}(|d_{m}|^{-1/R}R^{-A})\nonumber
\end{eqnarray}
We split the expression (\ref{eq2}) into a main term of
$$q^{N} \sum_{d_1,\ldots,d_{n}}\frac{g(d_1,\ldots,d_{n})}{|[d_1,\ldots,d_{n}]|} \prod_{m=1}^{n}\mu(d_{m}) \int_{-\sqrt{R}}^{\sqrt{R}}|d_{m}|^{-\frac{1+it_{m}}{R}}\psi(t_{m}) dt_{m}$$
plus an error term, which is
\begin{eqnarray}
& &q^{N} \sum_{d_1,\ldots,d_{n}}\frac{g(d_1,\ldots,d_{n})}{|[d_1,\ldots,d_{n}]|} O_{A}(R^{-A}|d_1\cdots d_{n}|^{-1/R}) \nonumber\\
&\ll_{A}& q^{N} R^{-A}\sum_{d_1,\ldots,d_{n}}
\frac{|d_1\cdots d_{n}|^{-1/R}}{|[d_1,\ldots,d_{n}]|} \label{eq3}
\end{eqnarray}
Note that $\sum_{d_1,\ldots,d_{n}}\frac{|d_1\cdots
d_{n}|^{-1/R}}{|[d_1,\ldots,d_{n}]|}$ factors as
\begin{eqnarray}
& & \prod_{P} \left(\sum_{d_1,\ldots,d_{n} \in \{1, P, P^2,\ldots
\}}
\frac{|d_1\cdots d_{n}|^{-1/R}}{|[d_1,\ldots,d_{n}]|} \right) \nonumber \\
&=& \prod_{P} \left( 1+ \frac{n}{|P|^{1+1/R}} +
O_{n}\left(\frac{1}{|P|^2}\right)
\right) \nonumber \\
&=& \prod_{P} \left( 1- \frac{1}{|P|^{1+1/R}} \right)^{-n} \left(1+
O_{n}\left(\frac{1}{|P|^2} \right) \right) \nonumber
\end{eqnarray}

Note that the product $\left(1+O\left( \frac{1}{|P|^2}\right)
\right)$ is absolutely convergent, while $\prod_{P}
\left(1-\frac{1}{|P|^{1+\frac{1}{R}}} \right) =
\zeta_{q}(1+\frac{1}{R})= \frac{1}{1-q^{-1/R}}=O(R)$. Thus by
choosing $A \geq n$, we see that the error term (\ref{eq3}) is
$o(q^{N})$. In particular the sum in (\ref{eq2}) converges absolutely.

Therefore, it suffices to show that
$$\sum_{d_1,\ldots,d_{n}}\frac{g(d_1,\ldots,d_{n})}{[d_1,\ldots,d_{n}]} \prod_{m=1}^{k}\mu(d_{m}) \prod_{m=1}^{k} \int_{-\sqrt{R}}^{\sqrt{R}}|d_{m}|^{-\frac{1+it_{m}}{R}}\psi(t_{m}) dt_{m}=CGH(1+o_{n}(1))\left( \frac{\log q}{R} \right)^{r}$$
Note that all of our expressions are in terms of $R$, and we have
eliminated the role of $N$. We now switch the orders of the sums and
the integral (which is legitimate since the sum is absolutely convergent) and get
\begin{equation} \label{int}
\int_{-\sqrt{R}}^{\sqrt{R}} \cdots \int_{-\sqrt{R}}^{\sqrt{R}}
\left( \sum_{d_1,\ldots,d_{n}\in \mathbf{G}_{R}
}\frac{g(d_1,\ldots,d_{n})}{|[d_1,\ldots,d_{n}]|}
\prod_{m=1}^{k}\mu(d_{m}) |d_{m}|^{-\frac{1+it_{m}}{R}} \right)
\prod_{m=1}^{k}\psi(t_{m}) dt_{1} \cdots dt_{k}
\end{equation}
Let us estimate the expression under the integration.
\begin{lemma}For every $I \subset \{1, \ldots, n \}, I \neq \emptyset$ let $c_{I}=1$ if $J_{i}=J_{i'}$ for every $i, i' \in I$, and 0 otherwise.
Then for every $t_1, \ldots, t_{n} \in [-\sqrt{R}, \sqrt{R}]$ we
have
\begin{equation} \label{eq4} \sum_{d_1,\ldots,d_{n}
}\frac{g(d_1,\ldots,d_{n})}{[d_1,\ldots,d_{n}]}
\prod_{m=1}^{n}\mu(d_{m})|d_{m}|^{-\frac{1+it_{m}}{R}}=GH(1+o_{R \rightarrow \infty}(1)) \left( \frac{\log q}{R} \right)^{r}
\prod_{\substack{I \subset \{1,\ldots,n \},\\ I \neq \emptyset}} \left(\sum_{m
\in I}(1+it_{m})\right)^{(-1)^{|I|+1}c_{I}}
\end{equation}
\end{lemma}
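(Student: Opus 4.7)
The strategy is the standard Goldston-Y{\i}ld{\i}r{\i}m factorization followed by an invocation of Lemma \ref{euler}. Since $\mu$ forces every $d_{m}$ to be squarefree, and since $g$ is multiplicative on squarefree tuples (by CRT applied to the system $f\equiv -J_{i}\pmod{d_{i}}$), while $[d_{1},\ldots,d_{n}]$, $\mu$ and $|\cdot|$ are all multiplicative in the usual sense, the sum on the left-hand side of (\ref{eq4}) factors as an Euler product. Parameterizing the local sum at $P$ by $I=\{m: d_{m}=P\}$ and peeling off the $I=\emptyset$ term, the local factor becomes
$$1-\sum_{\emptyset\neq I\subset\{1,\ldots,n\}}\frac{(-1)^{|I|+1}\,g_{P}(I)}{|P|^{1+s_{I}}},\qquad s_{I}:=\frac{1}{R}\sum_{m\in I}(1+it_{m}),$$
where $g_{P}(I)\in\{0,1\}$ is $1$ iff the residues $J_{m}\bmod P$ for $m\in I$ all coincide. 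This is exactly the shape to which Lemma \ref{euler} applies, with one ``variable'' $s_{I}$ per nonempty $I$ and coefficients $c_{P,I}=(-1)^{|I|+1}g_{P}(I)$.

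\textbf{Identifying the limit coefficients.} For any prime $P\nmid \Delta$, congruence modulo $P$ detects no spurious coincidences among the $J_{i}$, so $g_{P}(I)=c_{I}$; hence the exceptional set $\mathcal{S}$ in Lemma \ref{euler} is contained in $\{P:P\mid \Delta\}$ and the limit coefficients are $(-1)^{|I|+1}c_{I}$. The hypotheses $|c_{P,I}|\leq 1$, $\Re s_{I}=|I|/R>0$, and the uniform bound $|s_{I}|\ll 1/\sqrt{R}=o(1)$ (using $|t_{m}|\leq\sqrt{R}$) are immediate, so Lemma \ref{euler} is applicable.

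\textbf{Matching the output to the right-hand side.} A short inclusion-exclusion, grouping the indices in $\{1,\ldots,n\}$ by their residue class mod $P$, gives $\sum_{\emptyset\neq I}(-1)^{|I|+1}g_{P}(I)=\alpha_{P}$; replacing congruence by equality yields the same identity with $\alpha_{P}$ replaced by $r$, i.e.\ $\sum_{\emptyset\neq I}(-1)^{|I|+1}c_{I}=r$. Consequently the ``$G$'' factor produced by Lemma \ref{euler} is exactly $\prod_{P}(1-\alpha_{P}/|P|)(1-1/|P|)^{-r}$, the exceptional factor $\prod_{P\in\mathcal{S}}(1+O_{n}(1/|P|))$ is absorbed into $H=\prod_{P\mid\Delta}(1+O_{n}(1/|P|))$, and each $(s_{I}\log q)^{(-1)^{|I|+1}c_{I}}$ splits as $(\log q/R)^{(-1)^{|I|+1}c_{I}}\bigl(\sum_{m\in I}(1+it_{m})\bigr)^{(-1)^{|I|+1}c_{I}}$. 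Multiplying over $I$, the $\log q/R$ exponents collapse to $r$, yielding the claimed product (\ref{eq4}).

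\textbf{Main obstacle.} There is no real analytic difficulty once the Euler factorization is set up; the only delicate part is the combinatorial bookkeeping---confirming that the two inclusion-exclusion sums really do collapse to $\alpha_{P}$ and $r$, and that the signs in $c_{P,I}=(-1)^{|I|+1}g_{P}(I)$ combine correctly so that the ``$G$'' produced by Lemma \ref{euler} is exactly the arithmetic factor appearing in Proposition \ref{gy}. After that, the $(\log q/R)^{r}$ scaling and the residual product of $\sum_{m\in I}(1+it_{m})$ powers drop out automatically.
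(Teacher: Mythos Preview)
Your proposal is correct and follows essentially the same route as the paper: factor the sum over squarefree tuples into an Euler product via the Chinese Remainder Theorem, identify the local factors $g_{P}(I)$ (the paper's $c_{P,I}$), note that for $P\nmid\Delta$ these reduce to $c_{I}$, apply Lemma \ref{euler}, and verify the two inclusion-exclusion identities $\sum_{I\neq\emptyset}(-1)^{|I|+1}c_{P,I}=\alpha_{P}$ and $\sum_{I\neq\emptyset}(-1)^{|I|+1}c_{I}=r$ to match the arithmetic factor $G$. The only differences are notational, and your explicit verification that $|s_{I}|\ll 1/\sqrt{R}$ (so the hypotheses of Lemma \ref{euler} hold uniformly in $t_{1},\ldots,t_{n}\in[-\sqrt{R},\sqrt{R}]$) is a point the paper leaves implicit.
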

\begin{proof}
Recall that in the expression on the left hand side of (\ref{eq4}), only square-free $d_1,\ldots,d_{n}$ are involved. We have that $g(d_1,\ldots,d_{n})$ always takes on two values 0 and 1. More precisely, by the Chinese
remainder theorem, $g(d_1,\ldots,d_{n})=\prod_{P}c_{P,\{i:P|d_{i}\}}$, where the
$c_{P,I}$ are ``local factors'' defined by $c_{P,I}=\sharp \{
\d(f)<\d(P): P|f+J_{i} \textrm{ for every } i\in I\} $ for every $I
\subset \{1, \ldots, n \}, I \neq \emptyset$. We have the following
explicit formula:
$$c_{P,I}=
\left\{
                                         \begin{array}{ll}
                                           1, & \hbox{if $J_{i} \equiv J_{i'} \pmod{P}$ for every $i,i' \in I$ ;} \\
                                           0, & \hbox{otherwise.}
                                         \end{array}
                                       \right.$$
Let $\mathcal{S}$ be the set of all irreducible divisors of
$\Delta$. Then for $P$ outside of $\mathcal{S}$, we have
$c_{P,I}=c_{I}$.  The left hand side of (\ref{eq4}) can factor as
$$\prod_{P} \left( 1 - \sum_{I \subset \{1,\ldots,n \}, I \neq \emptyset} (-1)^{|I|+1} \frac{c_{P,I}}{|P|^{1+\sum_{j \in I}\frac{1+it_{m}}{R}}} \right)
$$
which is an Euler product treated in Lemma \ref{euler}. We know from
Lemma \ref{euler} that it is equal to
$$
GH(1+o_{R \rightarrow \infty}(1))\prod_{I \subset \{1, \ldots, k\}, I \neq
\emptyset}\left(\sum_{m \in I}\frac{1+it_{m}}{R} \log
q\right)^{(-1)^{|I|+1}c_{I}}
$$ where $G$ is the
arithmetic factor $$G=\prod_{P} \left(1+\sum_{I \neq \emptyset}
\frac{(-1)^{|I|}c_{P,I}}{|P|} \right) \left(1-\frac{1}{|P|}
\right)^{\sum_{I \neq \emptyset}(-1)^{|I|c_{I}}}$$
Let us verify that this is indeed the same expression for $G$ claimed at the beginning.
\begin{claim}
$\sum_{I \subset \{1, \ldots, n\}, I \neq \emptyset}(-1)^{|I|}c_{I}=-r$
\end{claim}
Indeed, if $a_1, \ldots, a_{r}$ are the multiplicities of $J_1,
\ldots, J_{n}$, then $$\sum_{I \subset \{1, \ldots, n\}, I \neq \emptyset}(-1)^{|I|}c_{I}=\sum_{s=1}^{r}
\sum_{j=1}^{a_{s}} (-1)^{j} \binom{a_{s}}{j} =-r$$
\begin{claim}
$\sum_{I \subset \{1, \ldots, n\}, I \neq
\emptyset}(-1)^{|I|c_{P,I}}=-\alpha_{P}$
\end{claim}
This follows from exactly the same observation as the previous
claim. Thus $G=\prod_{P} \left(
1-\frac{1}{|P|} \right)^{-r}\left(1-\frac{\alpha_{P}}{|P|} \right)$ and the lemma follows.
\end{proof}
By integrating over all $t_{1}, \ldots, t_{n}\in
[-\sqrt{R},\sqrt{R}]$, we see that the expression in (\ref{int}) is
equal to
\begin{eqnarray}
& & \left( \frac{\log q}{R} \right)^{r}G H(1+o(1))\int_{-\sqrt{R}}^{\sqrt{R}} \cdots
\int_{-\sqrt{R}}^{\sqrt{R}} \prod_{I \subset \{1, \ldots,n\}, I \neq
\emptyset}\left( \sum_{m \in I} (1+it_{m})
\right)^{(-1)^{|I|+1}c_{I}} \prod_{m=1}^{n}\psi(t_{m}) dt_{1} \cdots
dt_{n} \nonumber \\
&=& \left( \frac{\log q}{R} \right)^{r}G H(1+o(1))\int_{-\infty}^{\infty} \cdots
\int_{-\infty}^{\infty} \prod_{I \subset \{1, \ldots,n\}, I \neq
\emptyset}\left( \sum_{m \in I} (1+it_{m})
\right)^{(-1)^{|I|+1}c_{I}} \prod_{m=1}^{n}\psi(t_{m}) dt_{1} \cdots
dt_{n} \nonumber
\end{eqnarray}
since $\psi$ decreases rapidly. Thus we have proved the estimate
$(\ref{gye})$, with
$$C=\int_{-\infty}^{\infty} \cdots
\int_{-\infty}^{\infty} \prod_{I \subset \{1, \ldots,k\}, I \neq
\emptyset}\left( \sum_{m \in I} (1+it_{m})
\right)^{(-1)^{|I|+1}c_{I}} \prod_{m=1}^{k}\psi(t_{m}) dt_{1} \cdots
dt_{k}$$ This expression can be simplified a little bit. Let
$a_{1},\ldots,a_{r}$ be the multiplicities of $J_1,\ldots,J_{n}$,
then we have $C=\prod_{s}C_{a_{s}}$, where
\begin{equation} \label{c}
C_{a}=\int_{-\infty}^{\infty} \cdots
\int_{-\infty}^{\infty} \prod_{I \subset \{1, \ldots,a\}, I \neq
\emptyset}\left( \sum_{m \in I} (1+it_{m}) \right)^{(-1)^{|I|+1}}
\prod_{m=1}^{a}\psi(t_{m}) dt_{1} \cdots dt_{a}
\end{equation}
In our applications we will be able to compute these constants explicitly
in terms of $\chi$.
\end{proof}

\section{The correlation condition} \label{S10}
In this section we prove the correlation condition. As mentioned
before, we will need a variant of Proposition \ref{gy}:
\begin{proposition} \label{gy2}
Suppose $J_1, J_2, \ldots, J_{n} \in \Fq[t]$, not necessarily
distinct. Let $\Delta=\Delta(J_1,\ldots,J_{n})$ be defined as before in Proposition \ref{gy}. Then we have the asymptotic formula
\begin{equation}\label{gye2}
\sum_{f \in \GN}
\Lambda_{R}(W(f+J_1)+1)\cdots\Lambda_{R}(W(f+J_{n})+1)=CGH (1+o_{n}(1))q^{N} \left(\frac{\log q}{R}\right)^{r}
\end{equation}
where $G=\prod_{\d(P)<w} \left(1-\frac{1}{|P|} \right)^{-r} \prod_{\d(P) \geq w}\left(1-\frac{\alpha_{P}}{|P|} \right) \left(1-\frac{1}{|P|} \right)^{-r}, H=\prod_{P|\Delta}\left(1+O_{n}\left(\frac{1}{|P|} \right) \right)$, and $C$ is the same constant as in Proposition \ref{gy}.
\begin{proof}
The proof follows along the lines of that of Proposition \ref{gy}.
The only difference is that we have a different formula for local
factors $c_{P,I}=\sharp \{\d(f)<\d(P): P| W(f+J_{i}) + 1 \forall i
\in I) \}$: For any $I \subset \{1, \ldots, n \}, I \neq \emptyset$,
we have
$$c_{P,I}= \left\{
                                         \begin{array}{ll}
                                           1, & \hbox{if $\d(P)\geq w$ and $J_{i} \equiv J_{i'} \pmod{P}$ for every $i,i' \in I$ ;} \\
                                           0, & \hbox{otherwise.}
                                         \end{array}
                                       \right.$$
By incorporating this change into the proof, we will find the desired expression for $G$.
\end{proof}

\end{proposition}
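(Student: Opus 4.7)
The plan is to follow the proof of Proposition \ref{gy} essentially verbatim, replacing only the combinatorial analysis of the local factors that arises after invoking the Chinese remainder theorem. Writing out the definition of $\Lambda_R$, the left-hand side of (\ref{gye2}) unfolds as
\[
\sum_{d_1,\ldots,d_n \in \mathbf{G}_{R}} \left(\prod_{i=1}^n \mu(d_i)\chi\!\left(\frac{\d(d_i)}{R}\right)\right) \sum_{f \in \GN} 1_{d_i \mid W(f+J_i)+1\ \forall i},
\]
and only squarefree $d_i$ contribute. Provided $nR < N$, the inner sum equals $g(d_1,\ldots,d_n)\,q^N/|[d_1,\ldots,d_n]|$, where $g$ counts solutions modulo $[d_1,\ldots,d_n]$ of the system $W(f+J_i)+1 \equiv 0 \pmod{d_i}$ for $i=1,\ldots,n$. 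The Mellin-type representation $\chi(x)=\int q^{-(1+it)x}\psi(t)\,dt$, the truncation at $|t_m|\leq\sqrt{R}$, and the bound on the resulting error term all proceed identically to Proposition \ref{gy}, reducing the problem to an Euler product in $n$ complex variables.

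The one genuinely new step is the computation of the local factors $c_{P,I} = \#\{\d(f)<\d(P): P \mid W(f+J_i)+1\ \forall i \in I\}$ coming from the Chinese remainder decomposition $g(d_1,\ldots,d_n)=\prod_P c_{P,\{i:P \mid d_i\}}$. The $W$-trick enters precisely here: if $\d(P)<w$ then $P \mid W$, so $W(f+J_i)+1 \equiv 1 \pmod{P}$, the congruence has no solution, and $c_{P,I}=0$ for every nonempty $I$. If $\d(P)\geq w$, then $W$ is invertible modulo $P$, the system collapses to $f \equiv -W^{-1} - J_i \pmod{P}$, and $c_{P,I}=1$ exactly when all $J_i$ ($i\in I$) share a single residue class modulo $P$, which recovers the same formula as in Proposition \ref{gy}.

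Plugging these $c_{P,I}$ into Lemma \ref{euler}, with $(-1)^{|I|+1}c_{P,I}$ in the role of $c_{P,j}$ and $R^{-1}\sum_{m \in I}(1+it_m)$ in the role of $s_j$, the analytic part $\prod_I (s_I \log q)^{(-1)^{|I|+1} c_I}$ is unchanged from Proposition \ref{gy} (the generic $c_I$ depends only on coincidences among the $J_i$, not on $W$), so integrating against $\prod_m \psi(t_m)$ again produces the same constant $C$ and the same factor $(\log q/R)^r$ via $\sum_{I\neq\emptyset}(-1)^{|I|+1} c_I = r$. The arithmetic factor coming out of Lemma \ref{euler} is
\[
\prod_P \left(1 - \frac{\sum_{I\neq\emptyset}(-1)^{|I|+1}c_{P,I}}{|P|}\right)\!\left(1-\frac{1}{|P|}\right)^{-r},
\]
which splits according to whether $\d(P)<w$ or $\d(P)\geq w$: the small primes give $(1-1/|P|)^{-r}$ (the numerator vanishes), while the large primes contribute $(1-\alpha_P/|P|)(1-1/|P|)^{-r}$ via the same inclusion-exclusion $\sum_{I\neq\emptyset} (-1)^{|I|+1} c_{P,I}=\alpha_P$ carried out in Proposition \ref{gy}. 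This matches the claimed $G$ exactly, with $H$ absorbing the bounded perturbation at primes dividing $\Delta$ as in Lemma \ref{euler}.

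The main obstacle is really bookkeeping: one must keep the vanishing of $c_{P,I}$ at small primes consistent throughout, track sign conventions through the inclusion-exclusion, and package the finitely many structural primes of degree less than $w$ cleanly into $G$ rather than the error. Once these are handled, no new analytic ingredient beyond Proposition \ref{gy} and Lemma \ref{euler} is required.
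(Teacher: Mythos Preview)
Your proposal is correct and follows essentially the same approach as the paper's proof: both reduce to Proposition \ref{gy} with the single modification that the local factors $c_{P,I}$ vanish for $\d(P)<w$ (since $P\mid W$ forces $W(f+J_i)+1\equiv 1$) and coincide with those of Proposition \ref{gy} for $\d(P)\geq w$. You have simply spelled out in more detail how this change propagates through Lemma \ref{euler} to produce the modified arithmetic factor $G$.
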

\begin{proof}[Proof of the correlation condition]
We are interested in expressions of the form
$$\E \left( \nu(f+h_1)\cdots \nu(f+h_{l}) | f \in \GN \right)$$
where $h_1, \ldots, h_{l} \in \GN$ and the number of forms $l$ is
bounded by $l_0$ which depends only on $k$. Recall that our goal is
to find a function $\tau$ on $\GN$ such that
\begin{equation}\label{eq5}
\E \left( \nu(f+h_1)\cdots \nu(f+h_{l}) | f \in \GN \right) \leq \sum_{1 \leq 1 \leq j \leq l} \tau(h_{i}-h_{j}) \end{equation}
Moreover, for every $1\leq p<\infty$,
\begin{equation}\label{eq6}
\E(\tau(f)^{p})=O_{p}(1)
\end{equation}

In the event where two of the $h_{i}$ are equal, we bound $\E \left( \nu(f+h_1)\cdots \nu(f+h_{l}) | f \in \GN \right)$ by $\|\nu\|_{\infty}^{l}=q^{O(N/\log N)}$, thanks to Lemma \ref{divisor}. By choosing $\tau(0)=q^{O(N/\log N)}$ then clearly the inequality \ref{eq5} is satisfied. Moreover, since $q^{O(N/\log N)}=O_{\epsilon}(q^{N \epsilon})$ for every $\epsilon>0$, the addition of $q^{O(N/\log N)}$ to $\tau(0)$ does not affect the boundedness of $\E(\tau^{p})$ for every $p>1$.

Therefore, we have to find a function $\tau \in L^{p}$ for every $p>1$ so that the inequality (\ref{eq5}) is satisfied when all the $h_{i}$ are distinct. From the definition of $\nu$, we have
\begin{eqnarray}
& & \E \left( \nu(f+h_1)\cdots \nu(f+h_{l}) | f \in \GN \right) \nonumber \\
&=& R^{l} \left( \frac{\Phi(W)}{|W|} \right)^{l} \E \left( \Lambda_{R}(W(f+h_1)+1)^2\cdots \Lambda_{R}(W(f+h_{l})+1)^2 | f \in \GN \right) \nonumber
\end{eqnarray}
Thanks to Proposition $\ref{gy2}$, we know that $$\E \left( \Lambda_{R}(W(f+h_1)+1)^2\cdots \Lambda_{R}(W(f+h_{l})+1)^2 | f \in \GN \right)=CGH (1+o(1)) \left( \frac{\log q}{R} \right)^{l}$$
where
\begin{eqnarray}
G &=& \prod_{\d(P) < w}\left(1-\frac{1}{|P|} \right)^{-l} \prod_{\d(P) \geq w} \left(1-\frac{l}{|P|} \right) \left(1-\frac{1}{|P|}\right)^{-l} \nonumber \\
&=& \left(\frac{\Phi(W)}{W}\right)^{-l}\prod_{\d(P) \geq w} \left(1-\frac{l}{|P|} \right) \left(1-\frac{1}{|P|}\right)^{-l} \nonumber
\end{eqnarray}

Since $\prod_{P} \left(1-\frac{l}{|P|} \right) \left(1-\frac{1}{|P|}\right)^{-l}=\prod_{P} \left(1+O_{l}\left(\frac{l}{|P|^2} \right) \right)$ converges absolutely, and since $w \rightarrow \infty$, we have that $G=\left(\frac{\Phi(W)}{W}\right)^{-l}(1+o(1))$.

Let us compute $C$ explicitly. In this case, each $h_{i}$ has multiplicity 2, hence equation (\ref{c}) gives us:
\begin{eqnarray}
C&=& \left(\int_{-\infty}^{\infty} \int_{-\infty}^{\infty} \frac{(1+it_1)(1+it_2)}{2+it_1+it_2} \psi(t_1)\psi(t_2)
dt_1 dt_2\right)^{l} \nonumber \\
&=& \left(\int_{-\infty}^{\infty} \int_{-\infty}^{\infty} (1+it_1)(1+it_2)\psi(t_1)\psi(t_2) \log q \left( \int_{0}^{\infty}  q^{-(2+it_1+it_2)x} dx \right)
dt_1 dt_2\right)^{l} \nonumber \\
&=& \left(\int_{0}^{\infty} \log q \left(\int_{-\infty}^{\infty}q^{-(1+it)x}\psi(t) dt \right)^2 \right)^{l} \nonumber \\
&=& \left( \int_{0}^{\infty} \log q \left( \frac{\chi'(x)}{\log q} \right)^2 dx \right)^{l}\nonumber \\
&=& (\log q)^{-l} \nonumber
\end{eqnarray}

Therefore, $$\E \left( \nu(f+h_1)\cdots \nu(f+h_{l}) | f \in \GN \right) = (1+o(1))H$$
Recall that $H = \prod_{P | \Delta} \left( 1+ O_{l}\left( \frac{1}{|P|} \right) \right)$, where $\Delta=\prod_{1 \leq i \leq j \leq m}(h_{i}-h_{j})$. Let us bound $(1+o(1))H$ by $\exp\left( M \sum_{P| \Delta} \frac{1}{|P|} \right)$, where $M$ is a constant depending only on $l_0$, hence on $k$.

For $f \neq 0$, put $\tau(f)=\exp \left( K \sum_{P|f}\frac{1}{|P|}
\right)$ for $K$ sufficiently large depending on $M$, then clearly
the inequality (\ref{eq5}) is satisfied. The only thing left to verify is
the inequality (\ref{eq6}). By Lemma \ref{tz}
we have
\begin{eqnarray}
\E(\tau(f)^{p}|\GN) &=& \E \left( \exp \left( pK \sum_{P|f}\frac{1}{|P|}
\right) \Big| f \in \GN \right) \nonumber \\
&\ll_{K,p}& \E \left( \sum_{P|f}
\frac{\log^{Kp}|P|}{|P|} \Big| f \in \GN \right) \nonumber
\end{eqnarray}
For every $P$, the number of $f \in \GN$ that is divisible by $P$ is at most $\frac{q^{N}}{|P|}$. Thus
\begin{eqnarray}
\E \left( \sum_{P|f} \frac{\log^{Kp}|P|}{|P|} \Big| f \in \GN \right) &\leq & \frac{1}{q^{N}} \sum_{P} \frac{q^{N}}{|P|}\frac{\log^{Kp}|P|}{|P|} \nonumber \\
&=& \sum_{P}\frac{\log^{Kp}|P|}{|P|^2} = O_{K,p}(1) \nonumber
\end{eqnarray}
as required.
\end{proof}

\section{The linear forms condition} \label{S11}
In this section we prove the linear forms condition. For this condition, the following variant of Proposition \ref{gy} is needed:
\begin{proposition} \label{gy3}
Let $\psi_1, \ldots, \psi_{m}$ are $m$ non-zero linear forms in $n$ variables $(\GN)^{n} \rightarrow \Fq[t]$, not necessarily distinct, of the form
$$\psi_{i}(\mathbf{f})=\sum_{j=1}^{n}L_{ij} f_{j}+b_{i}$$
for every $\mathbf{f}=(f_1,\ldots,f_{n}) \in (\GN)^{n}$, where
\begin{itemize}
  \item $m \leq m_{0}, n \leq n_{0}$, where $m_0, n_0$ are constants depending on $k$.
  \item The coefficients $L_{ij}, b_{i} \in \Fq[t]$ and $\d(L_{ij})<w/2$ for every $i,j$.
  \item For any two $i,i'=1, \ldots, m$, either the two vectors $(L_{ij})_{j=1,\ldots,n}$ and $(L_{i'j})_{j=1,\ldots,n}$ are not proportional (over $\Fq(t)$), or they are identical and $\psi_{i}=\psi_{i'}$.
\end{itemize}
Let $r$ be the number of distinct forms in $\psi_1, \ldots, \psi_{m}$.
Then (assuming that $R= \alpha N$ and $\alpha$ is sufficiently small depending only on $m_0,n_0$) we have the following asymptotic formula as $N\rightarrow \infty$:
\begin{equation} \label{gye3}
\sum_{\mathbf{f} \in (\GN)^{n}} \Lambda_{R}(W\psi_1(f)+1) \cdots \Lambda_{R}(W\psi_{m}(f)+1) = C(1+o(1))\left(\frac{\Phi(W)}{W}\right)^{-r}\left( \frac{\log q}{R} \right)^{r}q^{Nn}
\end{equation}
where $C$ is a computable constant (depending only on $\chi$ and the multiplicities of the $\psi_{i}$).
\end{proposition}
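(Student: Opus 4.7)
The plan is to follow the template of Proposition \ref{gy} closely, with the three new features --- the $W$-trick, the shift from one to $n$ variables, and coefficient vectors that genuinely vary between forms --- handled in an essentially orthogonal way. Starting from the M\"obius expansion of each $\Lambda_R$ and switching the order of summation yields
$$S = \sum_{\substack{d_1,\ldots,d_m\text{ sqfree}\\ \d(d_i)<R}} \prod_{i=1}^m \mu(d_i)\chi\!\left(\frac{\d(d_i)}{R}\right)\sum_{\mathbf{f}\in(\GN)^n}\mathbf{1}_{d_i\mid W\psi_i(\mathbf{f})+1\,\forall i}.$$
For $\alpha$ sufficiently small in terms of $m_0,n_0$, one has $\d([d_1,\ldots,d_m])<N$, and the inner count factors exactly by the Chinese Remainder Theorem as $q^{Nn}\prod_{P\mid D}c_{P,I_P}/|P|^n$, where $D=[d_1,\ldots,d_m]$, $I_P=\{i:P\mid d_i\}$, and $c_{P,I}$ counts the $\mathbf{f}\in(\Fq[t]/P)^n$ satisfying $W\psi_i(\mathbf{f})\equiv -1\pmod P$ for every $i\in I$.

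The new local analysis goes as follows. For $\d(P)<w$, $P\mid W$, so $W\psi_i+1\equiv 1\pmod P$ and $c_{P,I}=0$ as soon as $I\neq\emptyset$ --- this is the payoff of the $W$-trick, which eliminates small primes from the singular series. For $\d(P)\geq w$, $W$ is invertible mod $P$ and the hypothesis $\d(L_{ij})<w/2$ guarantees that each nonzero $L_{ij}$, and more generally every $2\times 2$ minor of the coefficient matrix (which has degree $<w$), remains nonzero modulo $P$. Consequently, distinct forms stay pairwise non-proportional modulo $P$, and for $P$ outside a finite exceptional set $\mathcal{S}$ of divisors of certain resultant-type polynomials of total degree $O_{m_0,n_0}(w)$, the local density is $c_{P,I}/|P|^n = 1/|P|^{r_I}$ (the consistency indicator equals $1$ here because all right-hand sides are $-W^{-1}$), where $r_I$ is the number of distinct forms in $\{\psi_i:i\in I\}$. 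Crucially, $|\mathcal{S}|=O(1)$ depends only on $m_0,n_0$, and every $P\in\mathcal{S}$ has $\d(P)\geq w$.

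With the local factors pinned down, I would apply the Mellin representation $\chi(\d(d)/R)=\int|d|^{-(1+it)/R}\psi(t)\,dt$ and recognize the resulting sum as an Euler product $\prod_P E_P(\mathbf{t})$ with $E_P(\mathbf{t})=1-\sum_{I\neq\emptyset}(-1)^{|I|+1}\sigma_P(I)|P|^{-\sum_{i\in I}(1+it_i)/R}$, covered by Lemma \ref{euler} with total exponent $-r$. The singular series then splits neatly: primes $P\mid W$ contribute exactly $\prod_{\d(P)<w}(1-1/|P|)^{-r}=(\Phi(W)/|W|)^{-r}$; primes $P\nmid W$ outside $\mathcal{S}$ contribute $\prod_{\d(P)\geq w}(1+O(1/|P|^2))=1+o(1)$; and the exceptional factor $H=\prod_{P\in\mathcal{S}}(1+O(1/|P|))=1+O(|\mathcal{S}|q^{-w})=1+o(1)$ since $w\to\infty$. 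Integrating the remaining oscillatory factor against $\prod_m\psi(t_m)\,dt_m$ over all of $\mathbb{R}^m$ (permitted by the rapid decay of $\psi$) produces the constant $C$ of the same shape as \eqref{c}, depending only on $\chi$ and the multiplicities. The principal technical obstacle I anticipate is precisely the verification of the local density formula $c_{P,I}/|P|^n=1/|P|^{r_I}$ off $\mathcal{S}$: the coefficient-degree bound $\d(L_{ij})<w/2$ is exactly what forces both the preservation of pairwise non-proportionality modulo $P$ and the containment $\mathcal{S}\subset\{P:\d(P)\geq w\}$, together allowing $H$ to be absorbed into $1+o(1)$ and leaving only the clean announced main term.
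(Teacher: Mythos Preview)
Your approach is essentially identical to the paper's: M\"obius expansion, CRT factorisation of the inner count, local analysis that kills the primes with $\d(P)<w$ via the $W$-trick and uses the bound $\d(L_{ij})<w/2$ to preserve pairwise non-proportionality mod $P$ for $\d(P)\ge w$, then the Mellin transform of $\chi$, Lemma~\ref{euler}, and integration to recover $C$.

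One small correction is worth flagging. The asserted local formula $c_{P,I}/|P|^n = 1/|P|^{r_I}$ is not valid in general: pairwise non-proportionality of the $r_I$ distinct linear parts only forces the rank of the system modulo $P$ to be at least $\min(r_I,2)$, not equal to $r_I$ (for instance three pairwise non-proportional linear forms in two variables have rank $2$, and $r_I$ can even exceed $n$). What your argument actually needs, and what pairwise non-proportionality does give, is the exact value $c_{P,I}=|P|^{n-1}$ when $r_I=1$ and the crude bound $c_{P,I}\le |P|^{n-2}$ when $r_I\ge 2$. With this, the $r_I\ge 2$ contributions land in the $O(1/|P|^2)$ correction and are absorbed into $1+o(1)$ exactly as you describe, since every prime in play has $\d(P)\ge w$. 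The paper proceeds in precisely this way and, for the same reason, has no need for your auxiliary exceptional set $\mathcal{S}$: once the $r_I\ge 2$ terms are treated as $O(1/|P|^2)$ uniformly, there is nothing further to excise.
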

\begin{proof}
The proof is similar to that of Proposition \ref{gy}. The left hand side of ($\ref{gye3}$) is equal to
$$\sum_{d_1,\ldots,d_{m} \in \GN} \left( \prod_{i=1}^{m} \mu (d_{i}) \chi \left( \frac{\d(d_{i})}{R} \right) \right) \frac{g(d_1,\ldots,d_{m})}{|[d_1, \ldots, d_{m}]|^{n}}q^{Nt}$$
where $g(d_1,\ldots,d_{m})$ is the number of solutions $\mathbf{f} \in (G_{|[d_1,\ldots,d_{m}]|})^{n}$ to the system of congruences $d_{i} | W\psi_{i}(\mathbf{f})+1$ for every $i=1,\ldots,m$.

Again, by the Chinese Remainder Theorem,  $g(d_1,\ldots,d_{m})$ factors as $\prod_{P}c_{P,\{ i:P|d_{i}\} }$, where the local factors $c_{P,I}$ are defined by $c_{P,I}=\sharp \{ \mathbf{f} \in (\mathbf{G}_{\d(P)})^{n}: P| W\psi_{i}(\mathbf{f})+1 \textrm{ for all } i \in I\}$ for every $I \subset \{1, \ldots, m\}, I \neq \emptyset$.

Clearly if $\d(P)<w$ then $c_{P,I}=0$.
Let us compute $c_{P,I}$ when $\d(P) \geq w$. In particular $\gcd(P,W)=1$. The system of congruences $P| W\psi_{i}(\mathbf{f})+1 \textrm{ for all } \in I$ amounts to a system of $|I|$ equations
$\psi_{i}(\mathbf{f})=-W^{-1}$ for every $i \in I$, where the $\psi_{i}$ are now regarded as affine maps $(\mathbf{F}_{P})^{n} \rightarrow \mathbf{F}_{P}$, where $\mathbf{F}_{P}=\Fq[t]/(P)$.

Note that when regarded as forms on $\mathbf{F}_{P}$, the property that for any two forms $\psi_{i},\psi_{i'}$, either their homogeneous parts are not proportional or they are identical, is still preserved. Indeed, this is obviously true if $n=1$. Suppose $n \geq 2$ and we have that for some $i,i'$, $\frac{L_{ij}}{L_{i'j}}=\frac{L_{ij'}}{L_{i'j'}}$ in $\mathbf{F}_{P}$ for any $j, j'=1, \ldots, n$. Then $P$ divides  $L_{ij}L_{i'j'}-L_{i'j}L_{ij'}$. Since $\d(L_{ij})<w/2$ for every $i,j$, this means that $L_{ij}L_{i'j'}-L_{i'j}L_{ij'}=0$. Therefore, the two vectors $(L_{ij})_{j=1, \ldots, n}$ and $(L_{i'j})_{j=1, \ldots, n}$ are indeed proportional over $\Fq(t)$, so that the forms $\psi_{i}$ and $\psi_{i'}$ are identical.

Being the number of solutions to a system of non-trivial linear equations over $\mathbf{F}_{P}$, $c_{P,I}$ is either 0 or a power of $|P|$ not exceeding $|P|^{n-1}$. By the assumption made on the $\psi_{i}$, $c_{P,I}=|P|^{n-1}$ if and only if all the forms in $I$ are identical. Otherwise, $c_{P,I}=O(|P|^{n-2})$.

Incorporating this change, we have that
$$ \sum_{d_1,\ldots,d_{m}} \frac{g(d_1,\ldots,d_{m})}{|[d_1, \ldots, d_{m}]|^{n}} \prod_{j=1}^{m} \mu(d_{j}) |d_{j}|^{-\frac{1+it_{j}}{R}} = \prod_{P} \left( 1 - \sum_{I \subset \{1,\ldots,m \}, I \neq \emptyset} (-1)^{|I|+1}\frac{c_{P,I}}{|P|^{n}}|P|^{\sum_{j \in I}-\frac{1+it_{j}}{R}} \right)
$$
By the above computation, this is equal to
\begin{eqnarray}
& & \prod_{\d(P)\geq w}\left( 1-\sum_{I} \frac{(-1)^{|I|+1}}{|P|^{1+\sum_{j \in I}\frac{1+it_{j}}{R}}}+O\left(\frac{1}{|P|^2}\right)\right) \nonumber \\
&=& (1+o(1))\prod_{\d(P)\geq w}\left( 1-\sum_{I}\frac{(-1)^{|I|+1}}{|P|^{1+\sum_{j \in I}\frac{1+it_{j}}{R}}}\right) \nonumber
\end{eqnarray}
where the sums are taken over all non-empty subsets $I$ of $\{1, \ldots, m\}$ such that the $\psi_{i}, i \in I$, are all identical. We now understand the use of the $W$-trick: it helps absorb the terms $O\left(\frac{1}{|P|^2}\right)$, which in turn comes from congruence relations between coefficients of the $\psi_{i}$. Invoking Lemma \ref{euler}, we see that the above product is equal to
$$G(1+o(1))\prod_{I} \left( \sum_{j \in I}(1+it_{j}) \frac{\log q}{R} \right)^{(-1)^{|I|+1}}$$
where $G$ is the arithmetic factor $G=\prod_{\d(P) \geq w}\left( 1+ \sum_{I}\frac{ (-1)^{|I|}}{|P|} \right) \left(1-\frac{1}{|P|}\right)^{\sum_{I}(-1)^{I}}$, the product being taken over all non-empty subsets $I$ of $\{1, \ldots, m\}$ such that the $\psi_{i}, i \in I$, are all identical.
It is easy to see that $\sum_{I}(-1)^{|I|}=r$, so that $G=(1+o(1))\left(\frac{\Phi(W)}{|W|}\right)^{-r}$.

We then proceed as in the proof of Proposition \ref{gy} and see that
$$\sum_{\mathbf{f} \in (\GN)^{n}} \Lambda_{R}(W\psi_1(f)+1) \cdots \Lambda_{R}(W\psi_1(f)+1) = C(1+o(1))\left(\frac{\Phi(W)}{W}\right)^{-r}\left( \frac{\log q}{R} \right)^{-r}q^{Nn}$$
where
\begin{equation}
C = \int_{-\infty}^{\infty}\cdots \int_{-\infty}^{\infty} \prod_{I} \left( \sum_{j \in I}(1+it_{j})\right)^{(-1)^{|I|+1}} \prod_{j=1}^{m}\psi(t_{j}) dt_1 \cdots dt_{m} \nonumber
\end{equation}
 the product being taken over all non-empty subsets $I$ of $\{1, \ldots, m\}$ such that the $\psi_{i}, i \in I$, are all identical. Similarly to the constant $C$ in Proposition \ref{gy}, $C$ factors as $\prod_{s=1}^{r}C_{a_{s}}$, where $a_1, \ldots, a_{r}$ are multiplicities of the $\psi_{i}$, and
$$C_{a}=\int_{-\infty}^{\infty} \cdots
\int_{-\infty}^{\infty} \prod_{I \subset \{1, \ldots,a\}, I \neq
\emptyset}\left( \sum_{j \in I} (1+it_{j}) \right)^{(-1)^{|I|+1}}
\prod_{m=1}^{a}\psi(t_{j}) dt_{1} \cdots dt_{a}$$
\end{proof}
\begin{proof}[Proof of the linear forms condition]
We are interested in expressions of the form $$\E
\left(\nu(\psi_1(\mathbf{f}))\cdots \nu(\psi_{m}(\mathbf{f}))|
\mathbf{f} \in (\FqN)^{n} \right)$$ where the $\psi_{i}(\mathbf{f})=\sum_{j=1}^{n}L_{ij}f_{j}+b_{i}$ are linear
form in $n$ variables, where $m \leq m_0, n \leq n_0$,  no two
homogenous parts are proportional, and the coefficients $L_{ij}$ are in the set $\{\frac{P}{Q}: \d(P), \d(Q) <k\}$. Recall that we want to bound
these expressions by $1 + o(1)$.

By the definition of $\nu$, this expression is equal to
\begin{equation} \label{e}
\left(\frac{\Phi(W)}{|W|} \right)^{m} R^{m} \E \left( \Lambda_{R}(W\psi_1(\mathbf{f})+1)^2 \cdots \Lambda_{R}(W\psi_{m}(\mathbf{f})+1)^2 | \mathbf{f} \in (\FqN)^{n} \right)
\end{equation}

Our first reduction is to replace the assumption that all the coefficients $L_{ij}$ are in $\{ \frac{f}{g} | f, g \in \mathbf{G}_{k} \}$ by $L_{ij} \in \mathbf{G}_{M}$ for some sufficiently large $M$ depending on $k$. Indeed, via a change of variables $\mathbf{f} \mapsto \left( \prod_{h \in \mathbf{G}_{k}, h \textrm{ monic}} h \right) \mathbf{f} $, the $\psi_{i}$ become linear forms with coefficients in $\Fq[t]$ and of degrees still bounded $\d \left( \prod_{h \in \mathbf{G}_{k},h \textrm{ monic}} h \right)+k =\sum_{d=1}^{k-1}dq^{d}+k=M$.

We are tempted to apply Proposition \ref{gy3} right away. However, a priori the $\psi_{i}$ are linear forms from $(\FqN)^{n}$ to $\FqN$, which are different from the linear forms $\Psi_{i}$ from $(\Fq[t])^{n}$ to $\Fq[t]$ given by the same formula $\Psi_{i}(\mathbf{f})=\sum_{j=1}^{n}L_{ij}f_{j}+b_{i}$ for every $\mathbf{f}=(f_1,\ldots,f_{n})\in (\Fq[t])^{n}$. More precisely, $\psi_{i}(\mathbf{f})$ is the residue of $\Psi_{i}(\mathbf{f})$ upon division by $f_{N}$, the irreducible polynomial underlying $\FqN$.

To remedy this, let us divide $(\Fq[t])^{n}$ into $q^{nM}$ ``boxes'' such that for $\mathbf{f},\mathbf{f'}$ in the same box $B$, we have $\max_{i}|f_{i}-f'_{i}|< q^{N-M}$ (in other words, each box is a product of cylinders of radius $q^{M}$ in $\Fq[t]$). Then for $\mathbf{f},\mathbf{f'}$ in the same box, we have $\Psi_{i}(\mathbf{f})-\Psi_{i}(\mathbf{f'})<q^{N-M+M}=q^{N}$. This means that the residues of $\Psi_{i}(\mathbf{f})$ and $\Psi_{i}(\mathbf{f})$ upon division by $f_{N}$ are the same. In other words, for any box $B$, we have a formula for $\psi_{i}(\mathbf{f})$:
$$\psi_{i}(\mathbf{f})=\Psi_{i,B}(\mathbf{f})=\sum_{j=1}^{n}L_{ij}f_{j}+b_{i,B}$$
for every $\mathbf{f} \in B$, and $b_{i,B}$ depends alone on the box $B$.

We now rewrite the expression (\ref{e}) as
$$\left(\frac{\Phi(W)}{|W|} \right)^{m} R^{m} \frac{1}{q^{nM}} \sum_{B} \E \left( \Lambda_{R}(W\psi_{1,B}(\mathbf{f})+1)^2 \cdots \Lambda_{R}(W\psi_{m,B}(\mathbf{f})+1)^2 | \mathbf{f} \in B \right) $$

Note that for $N$ sufficiently large, we have $\d(L_{ij}) < M <w/2$. For each box $B$, Proposition \ref{gy3} tells us that
$$\E \left( \Lambda_{R}(W\psi_{1,B}(\mathbf{f})+1)^2 \cdots \Lambda_{R}(W\psi_{m,B}(\mathbf{f})+1)^2 | \mathbf{f} \in B \right)=C(1+o(1))\left(\frac{\Phi(W)}{W}\right)^{-m}\left( \frac{\log q}{R} \right)^{m}$$
Similarly to the calculations in the proof of the correlation condition, we see that $C= (\log q)^{m}$. Summing this up over all the $q^{M}$ boxes, the linear forms condition is therefore verified.
\end{proof}


\end{document}